\newcommand{\ax}{\mathbb{R}^+}
\newcommand {\ov}[1]{ \overline{#1}}
\newcommand{\rd}{{\mathbb{R}^d}}
\newcommand{\rone}{\mathbb{R}}
\renewcommand{\Re}{\rone}
\renewcommand{\star}{\circledast}
\newcommand{\E}{\mathbf{E}}
\newcommand{\wt}{\widetilde}
\newcommand{\prt}{\partial}
\renewcommand {\epsilon}{\varepsilon}
\newcommand{\1}{\mathds{1}}
\newcommand{\eps}{\varepsilon}
\newcommand{\sgn}{\mathrm{sgn}}
\newcommand{\Lba}{\widetilde{L}}
\theoremstyle{plain}
\newtheorem{thm}{Theorem}[section]
\newtheorem{prop}{Proposition}[section]
\newtheorem{cor}{Corollary}[section]
\newtheorem{lem}{Lemma}[section]
\theoremstyle{definition}
\newtheorem{dfn}{Definition}[section]
\newtheorem{ex}{Example}[section]
\theoremstyle{remark}
\newtheorem{rem}{Remark}[section]
\DeclareMathSymbol{\ophi}{\mathalpha}{letters}{"1E}
\renewcommand{\phi}{\varphi}
\newcommand{\be}{\begin{equation}}
\newcommand{\ee}{\end{equation}}
\newcommand{\ben}{\begin{equation*}}
\newcommand{\een}{\end{equation*}}
\newcommand{\ba}{\begin{aligned}}
\newcommand{\ea}{\end{aligned}}
\newfont{\cyrfnt}{wncyr10}
\def\J3{\cyrfnt{\rm \u{\cyrfnt I}}}
\def\j3{\cyrfnt{\rm \u{\cyrfnt i}}}
\definecolor{DarkGreen}{rgb}{0.1,0.7,0.3}   
\definecolor{DarkGreen}{rgb}{0.1,0.7,0.3}   
\numberwithin{equation}{section}
\begin{document}

\title{Approximation in law of locally $\alpha$-stable L\'evy-type processes by non-linear regressions}
\author{Alexei Kulik\footnote{Wroclaw University of Science and Technology, Faculty of Pure and Applied Mathematics,
Wybrze\'ze Wyspia\'nskiego Str. 27, 50-370 Wroclaw, Poland, and} \footnote{Institute of  Mathematics, Ukrainian National Academy of Sciences,
Tereshchenkivska Str.\ 3, 01601 Kiev, Ukraine; kulik.alex.m@gmail.com}\
\thanks{The research is supported by Wrocław University
of Science and Technology, grant 0401/0155/18}
}

\maketitle
\begin{abstract}
We study a real-valued L\'evy-type process $X$, which is \emph{locally $\alpha$-stable} in the sense that its jump kernel is a combination of  a `principal' (state dependent) $\alpha$-stable part with a `residual' lower order  part. We show that under mild conditions on the  local characteristics of a process (the jump kernel and the velocity field) the process is uniquely defined, is Markov, and has the strong Feller property.  We approximate $X$  in law  by a non-linear regression $\wt X^x_{t}=\mathfrak{f}_t(x)+t^{1/\alpha}U^{x}_t
$
with a deterministic \emph{regressor term} $\mathfrak{f}_t(x)$ and $\alpha$-stable \emph{innovation term} $U^{x}_t$, and provide error estimates for such an approximation. A case study is performed, revealing different types of assumptions which lead to various choices of regressor/innovation terms and various types of the estimates.  The assumptions are quite general, cover the  \emph{super-critical case $\alpha<1$}, and allow non-symmetry of the L\'evy kernel and unboundedness of the drift coefficient.
\end{abstract}
\section{Introduction}

  L\'evy processes are used nowadays in a wide  variety of models in  physics, biology,  finance etc., where the random noise - by different reasons - can not be assumed Gaussian, and thus the entire model does not fit to the diffusion framework. For instance, the famous Ditlevsen model of the millennial climate changes \cite{Dit99} is based on the observation  that the available ice-core data necessarily requires non-Gaussian noise to be  included into the model. In the basic Ditlevsen model the non-Gaussian noise is $\alpha$-stable; nowadays it is understood that it would be physically more realistic to have the parameters of the noise state-dependent; e.g. the skewness parameter should be positive in the cold glacial periods and negative in the warmer interstadials. The similar problem appears in many other models  with state-dependent parameters, which gives a natural background for the notion of a \emph{L\'evy-type process}. The latter is understood as a (kind of) a L\'evy process whose characteristic triplet is allowed to depend on the current value  of the process; we refer to \cite{BSW} for a detailed introduction, see also  Section \ref{s2} below.
The definition of a L\'evy-type process has the same spirit with  the classical Kolmogorov's definition of  a diffusion process as a location-dependent Brownian motion with a drift. However, to the contrast with the classical theory of diffusions, in the general theory of such L\'evy-type process some principal questions remain unsolved in general, e.g.
\begin{itemize}
  \item[(I)]   for a given set of local  characteristics, is the corresponding L\'evy-type process uniquely defined?
  \item[(II)] what kind of local properties of the law of the process can be derived, and under which assumptions on characteristics?
\end{itemize}
Not being able to discuss in details a considerable list of references devoted to these questions, we refer to
 \cite{KK15}, \cite{Kuh17-2}, \cite{KKK} for such a discussion, and only note that the available methods contain a considerable list of limitations, which exclude from the consideration many natural and physically relevant L\'evy-type models.

In this paper we provide a detailed study of one class of L\'evy-type processes, which is highly relevant for applications and, on the other hand, reveals numerous hidden challenges which one encounters while trying to resolve the above questions (I), (II) in general L\'evy-type  setting. The class to be studied can be shortly described as a mixture of a real-valued $\alpha$-stable-type process with   state dependent \emph{drift, intensity,}  and \emph{skewness} parameters  on one hand, and  a certain (state dependent) lower order `nuisance' part on the other hand; see a detailed definition in Section \ref{s2}. The $\alpha$-stable noise, because of its scaling property, has an exceptional importance in physical applications, and at the same time there are strong reasons to require  the parameters of the noise to be state-dependent, likewise to the  Ditlevsen model discussed above.
 Presence of the `residual' lower order  part is quite reasonable, as well. Namely, this part allows one to introduce a wide spectrum of tempering/damping effects for the tails of the noise, which combines both the $\alpha$-stable and Gaussian regimes (see \cite{R07}) and thus appear frequently in physical models (see \cite{SCK} and references therein).  On the other hand, a lower order  \emph{microstructural} noise terms   without a specified inner structure appear quite naturally in finance models; see  \cite{AJ07} for a detailed discussion.

 For such a \emph{locally $\alpha$-stable} L\'evy-type model we prove the corresponding process to be uniquely defined and to be a Markov process with strong Feller property, thus resolving the general question (I). To approach the question (II),  we specify a family of $\alpha$-stable probability densities  $g^{t,x}, t> 0, x\in \Re$ and a function $\mathfrak{f}_t(x), t\geq 0,x\in \Re$ such that the  transition density $p_t(x,y)$ of $X$ has representation
\be\label{p_rep}
p_t(x,y)={1\over t^{1/\alpha}}g^{t,x}\left({y-\mathfrak{f}_t(x)\over t^{1/\alpha}}\right)+R_t(x,y),
\ee
where the residual kernel $R_t(x,y)$ is negligible (in a certain sense) as $t\to 0$. This representation essentially means that, conditioned by $X_0=x$, process $X$ admits approximation  in law by the non-linear regression
\be\label{regression}
 \wt X^x_{t}=\mathfrak{f}_t(x)+t^{1/\alpha}U^{x}_t, \quad t>0,
\ee
where $U^{t,x}$ is a random variable with the $\alpha$-stable distribution density $g^{t,x}$. We call $\mathfrak{f}_t(x)$ a (deterministic) \emph{regressor term} for $X$, and  $U^{t,x}$  an \emph{$\alpha$-stable innovation term}. It is natural to call \eqref{regression} a \emph{conditionally $\alpha$-stable approximation} to $X$, in the same spirit with the standard \emph{conditionally Gaussian approximation} for a diffusion. However, we will see that  the regressor term $\mathfrak{f}_t(x)$ in general should have  a more sophisticated form than just $x+b(x)t$, typical for the diffusion case.

Our study is based on  the  \emph{parametrix method}, which in the diffusion case is a classical analytical tool to construct and investigate  transition densities. To apply this method in the (non-Gaussian) L\'evy-type setting, we modify it  substantially; here we outline the most crucial change. The classical parametrix method  relies on the fact that a (properly chosen) `zero order approximation' to the unknown $p_t(x,y)$ and corresponding `differential error term' (see Section \ref{s41} below for these definitions) follow certain  prior bounds, which then propagate to the transition density $p_t(x,y)$. For diffusions these kernels are Gaussian; for certain  $\alpha$-stable-type models  similar kernel estimates with  $\alpha$-stable kernels are available as well;  see \cite{Ko89} and \cite{Ko00} for the cases $\alpha>1$ and  $\alpha\leq 1, b\equiv 0$ respectively, and \cite{Ku18}, where in the technically more involved  \emph{super-critical regime} with  $\alpha<1$ and non-trivial $b$ the kernel estimates are obtained as a combination of stable kernels with deterministic flows. However, all these models are `regular' in the  sense that the L\'evy kernel of the noise is assumed to have a density w.r.t. the Lebesgue measure. Presence of singular terms may change situation drastically; see  Example \ref{ex1} below, where $p_t(x,\cdot)$ is  unbounded and thus kernel estimates simply fail. The same effect have been discussed in the recent preprint \cite{KulStoRyz18} for solutions of multidimensional SDEs with cylindrical $\alpha$-stable noise and non-trivial rotation, see \cite[Remark~4.23]{KulStoRyz18}.

To study such highly singular settings, we adopt the following two-stage scheme. First, we
establish integral-in-$y$ estimates (actually, operator norm estimates in $C_\infty$) and perform the parametrix method with the convergence of corresponding series understood in this ($L_1$) sense. This resolves question (I) and gives $L_1$-estimates for the error term $R_t(x,y)$ in \eqref{p_rep}. Second, we analyse the series representation for $p_t(x,y)$ and clarify additional assumptions, which one should require in order  to get stronger types of estimates for $R_t(x,y)$: uniform-in-$(x,y)$ and kernel estimates. This scheme is motivated by perspective applications, where the choice among several types of estimates will allow one to avoid limitations in the model’s assumptions when a particular
application is considered. We plan to use integral-in-$y$ estimates in the proof of Local Asymptotic (Mixed) Normality property for statistical models with discretely observed L\'evy-type processes (this is an ongoing project with A.Kohatsu-Higa) and, combined with uniform-in-$(x,y)$ estimates, in the asymptotic study of the Least Absolute Deviation  estimator for a drift parameter (this is an ongoing project with H.Masuda). Motivated by these applications, we restrict the current exposition by the real-valued case with constant $\alpha$.   The multidimensional locally stable-like model with state-dependent $\alpha=\alpha(x)$ is considered in the widest  generality in the  companion paper \cite{KKS18}.

 The structure of the paper is the following. In Section \ref{s2} we introduce the notation and specify the model. In Section \ref{s3} we specify the conditions and formulate the main results. For these results we also provide a discussion, including examples, possible extensions, and related references. In Section \ref{s4} we separately explain the essence of the  parametrix method and derive the corresponding integral representation of the (candidate for) the transition probability density of the required process. Sections  \ref{s5} -- \ref{s7} respectively contain the proofs of three main results, Theorem \ref{mainthm1} -- Theorem \ref{mainthm3}. The proofs of certain technicalities, which otherwise would make the reading much more difficult, are postponed to Appendix.

\noindent \textbf{Acknowledgments}. The author is grateful to Arturo Kohatsu-Higa and Hiroki Masuda for insightful discussions which clarified for  him the perspectives of the parametrix technique in application to statistics, and for numerous helpful comments about the previous version of the draft. The author gladly expresses a particular respect to Victoria Knopova and Ren\'e Schilling: the numerous discussion on this paper and on the companion one \cite{KKS18} made a deep impact on the style of the final exposition and  saved the author from several pitiful mistakes. The work on the project has been principally finished  during the visit of the author  to the Technical University of Dresden (Germany); the author is  very grateful  to the Technical University of Dresden and especially to Ren\'e Schilling for their support and hospitality.    Finally, the author is deeply grateful to referees for their attention to the paper and very helpful comments.

\section{Notation and preliminaries}\label{s2}  In what follows, $C_\infty$ denotes the class of continuous functions $\Re\to \Re$ vanishing at $\infty$, and $C_0$ denotes the class of continuous functions with compact support. By $C^2_\infty, C^2_0$ we denote the classes  of twice differentiable functions $f$ such that $f, f', f''$ belong to  $C_\infty$ or $C_0$, respectively. A \emph{L\'evy-type operator} $L$ with the domain $C_\infty^2$ is defined by
\be\label{L_gen}
Lf(x)=b(x)f'(x)+{1\over 2}a(x)f''(x)+\int_\Re\Big(f(x+u)-f(x)-uf'(x)1_{|u|\leq 1}\Big)\mu(x; du),\quad f\in C_\infty^2.
\ee
Here $b:\Re\to \Re,a:\Re\to \Re^+$ are given measurable functions,  and $\mu(x; du)$ is a \emph{L\'evy kernel}; that is, a measurable function w.r.t.  $x$ and a L\'evy measure w.r.t. $du$.

There are two natural and closely related ways to associate a {L\'evy-type process} $X$ with the L\'evy type operator $L$. Within the  first one,  $X$ is  a time-homogeneous Markov process which generates a Feller semigroup (that is, a strongly continuous semigroup in $C_\infty$) such that its generator $A$ coincides with $L$ on $C_\infty^2$ (or, which is slightly more general, on $C_0^2$). The second way is based on the notion of the Martingale Problem (MP). Recall that a process $X$ is said to be a \emph{solution} to the \emph{martingale problem} $(L,\mathcal{D})$, if for every  $f\in \mathcal{D}$
the process
$$
f(X_t)-\int_0^tLf(X_s)\, ds, \quad t\geq 0
$$
is a martingale w.r.t. the natural filtration for $X$. A  {martingale problem} $(L,\mathcal{D})$ is said to be \emph{well posed} in $D(\ax)$ (the Skorokhod space of c\`adl\`ag functions), if for any probability measure $\pi$ on $\Re$ there exists a solution $X$ to this problem with c\`adl\`ag trajectories and $\mathrm{Law}(X_0)=\pi$, and for any two such solutions their distributions in $D(\ax)$ coincide. By the second definition, {L\'evy-type process} associated to $L$ is a solution to the MP $(L, \mathcal{D})$ with $L$ given by \eqref{L_gen} and $\mathcal{D}=C_\infty^2$ or $C_0^2$.

Arbitrary L\'evy process $X$ satisfies both of the above definitions; the corresponding operator $L$ is defined by \eqref{L_gen} with
$b(x)\equiv b, a(x)\equiv a, \mu(x, \cdot )\equiv \mu$, where  $(b,a,\mu)$ is the characteristic triplet for $X$. This explains the name  \emph{L\'evy-type process}, which  we use systematically. The principal problem (I) outlined in the Introduction can be now formulated precisely: given a triplet $b(x), a(x), \mu(x, du)$, is    a L\'evy-type process associated to $L$ uniquely defined in either/both of two ways explained above? That is, does there exist a unique Feller process with the prescribed restriction of the generator, or/and is the MP $(L, \mathcal{D})$ well posed?   The problem (II) then would be to describe -- in the most explicit way it is possible -- of the transition probability $P_t(x, dy)$ of the process $X$.

We will study these two questions in the particular setting of \emph{locally $\alpha$-stable} L\'evy-type operators/ processes,
which we now introduce. A real-valued  $\alpha$-stable process is a L\'evy process which lacks the diffusion term ($a=0$),  may contain a non-trivial shift ($b\not=0$),
and has the L\'evy measure
$$
\mu(du)=\mu^{(\alpha; \lambda,\rho)}(du):=\lambda{1+\rho\, \sgn\, u\over |u|^{\alpha+1}}\,du.
$$
 Taking the intensity and skewness parameters state dependent, $\lambda:\Re\to \Re^+$, $\rho:\Re\to [-1,1]$, we obtain \emph{an $\alpha$-stable L\'evy kernel}
$$
\mu^{(\alpha)}(x; du):=\mu^{(\alpha; \lambda(x),\rho(x))}(du)=\lambda(x){1+\rho(x)\, \sgn\, u\over |u|^{\alpha+1}}\,du.
$$
Our actual L\'evy kernel has the form
\be\label{mu_decomp}
\mu(x; du)=\mu^{(\alpha)}(x; du)+\nu(x; du);
\ee
that is, it is a perturbation of an $\alpha$-stable  kernel by a certain `residual'
kernel $\nu(x; du)$. The residual kernel $\nu(x; du)$ is allowed to be signed, and we denote  by
$\nu_+(x; du)$, $\nu_-(x; du)$ the positive (resp. the negative) parts of its  Hahn decomposition $\nu(x; du)=\nu_+(x; du)-\nu_-(x; du)$.
The negative part  $\nu_-(x; du)$ is assumed to be dominated by $\mu^{(\alpha)}(x; du),$ and $|\nu|(x;du)=\nu_+(x; du)+\nu_-(x; du)$ (the variation of $\nu(x; du)$) is assumed to be a {L\'evy kernel}. The main assumption imposed on the residual kernel is that, uniformly in $x$, the Blumenthal-Getoor \emph{activity index} for $|\nu|$ is strictly smaller than $\alpha$; that is, for some $\beta<\alpha$
\be\label{cond_ups_small}
|\nu| (x; \{|u|>r\}) \leq Cr^{-\beta}, \quad r\in (0, 1].
\ee
Since the Blumenthal-Getoor index for an $\alpha$-stable L\'evy measure equals $\alpha$,
 this condition  actually  means that the small jump behavior of $\mu(x;du)$ is asymptotically the same as for its $\alpha$-stable  part $\mu^{(\alpha)}(x;du)$, and this is our  reason  to call the kernel \eqref{mu_decomp} \emph{locally $\alpha$-stable}.

Summarizing all the above, we specify the \emph{locally $\alpha$-stable} L\'evy-type operator as an operator of the form \eqref{L_gen} with $\mu(x, du)$ given by  \eqref{mu_decomp}, $a(x)\equiv 0$, and possibly non-trivial $b(x)$; that is,
\be\label{L_gen1}
Lf(x)=b(x)f'(x)+\int_\Re\Big(f(x+u)-f(x)-uf'(x)1_{|u|\leq 1}\Big)\Big(\mu^{(\alpha)}(x; du)+\nu(x; du)\Big).
\ee

\section{The main results}\label{s3} In this section we specify the conditions imposed on the model,  formulate the main results, and make a discussion which includes examples, possible extensions, and related references.

\subsection{Conditions}
In what follows, $L$ is the L\'evy-type operator  defined by  \eqref{L_gen1}, and  \eqref{cond_ups_small} is assumed. Throughout the paper we denote by  $C$  a generic constant whose particular value may vary from place to place. We define the \emph{compensated drift coefficient} by
$$
\wt b(x)=  b(x)-1_{\alpha<1}\int_{|u|\leq 1}u\, \mu^{(\alpha)}(x; du)-1_{\beta<1}\int_{|u|\leq 1}u\, \nu(x; du),
$$
and assume the following.

\begin{itemize}
\item[$\mathbf{H}^{drift}.$] (On the compensated drift coefficient). There exists index $\eta\in[0,1]$ satisfying the \emph{balance condition}
\be\label{balance}
\alpha+\eta>1,
\ee
such that
\be\label{b_H_lin}
|\wt b(x)-\wt b(y)|\leq C|x-y|^\eta,  \quad |x-y|\leq 1.
\ee

  \item[$\mathbf{H}^{(\alpha)}.$] (On coefficients $\lambda, \rho$ of the kernel $\mu^\alpha$).
\begin{itemize}
  \item[(i)] $\lambda, \rho$ are H\"older continuous with some index $\zeta\in(0, \alpha)$;
  \item[(ii)] for some $0<\lambda_{\min}<\lambda_{\max}$,
  $$\lambda_{\min}\leq \lambda(x)\leq \lambda_{\max}.
  $$
\end{itemize}

  \item[$\mathbf{H}^{\nu}.$](On the residual kernel $\nu$).  We deal with  two types of upper bounds:
\begin{itemize}
  \item[(i)](weak bound) the kernel $\nu(x,du)$ satisfies \eqref{cond_ups_small} and the following  `tail condition':
     \be\label{cond_ups_large}
 \sup_{x\in \Re}|\nu| (x, \{|u|\geq R\})\to 0, \quad R\to \infty;
\ee
  \item[(ii)](strong bound) the kernel has the density $$
  q_\nu(x,u)={\nu(x,du)\over du},
  $$
 which satisfies
\be\label{reg_cond}
  |q_\nu(x,u)|\leq C|u|^{-\beta-1}1_{|u|\leq 1}+C|u|^{-\gamma-1}1_{|u|>1}
\ee
  with some $ \beta\in (0,\alpha), \gamma>0$.
\end{itemize}

  \item[$\mathbf{H}^{cont}.$] (Continuity assumptions). The kernel $\nu(x,du)$ is assumed to have the following weak continuity property: for any $f\in C(\Re)$ with compact support in $\Re\setminus\{0\}$, the function
\be\label{cont}
x\mapsto \int_{\Re}f(u)\nu(x,du)
\ee
is continuous. The drift coefficient $b$ is assumed to be continuous.
\end{itemize}

  Note that, thanks to condition $\mathbf{H}^{(\alpha)},$ the continuity of \eqref{cont} yields similar continuity for the entire kernel $\mu(x,du)=\mu^{(\alpha)}(x,du)+\nu(x,du)$.

\begin{rem}\label{r30} In the \emph{super-critical regime} $\alpha<1$, the balance condition \eqref{balance} is close to the necessary one for the process to be well defined. This observation dates back to \cite{TTW74}, where a natural  example of an SDE driven by a symmetric additive $\alpha$-stable noise with $\eta$-H\"older continuous $b$ is given, which has two different weak solutions. We emphasise that in the current setting the balance condition involves  the compensated drift coefficient $\wt b$ instead of the  original  $b$. \end{rem}

\begin{rem}\label{r32}
 A good way to understand the role of the continuity condition $\mathbf{H}^{cont}$ is to observe that, if (say) $\nu\equiv 0$ and $b$ is discontinuous, it is impossible for the  operator \eqref{L_gen1} that $Lf$ is continuous for all $f\in C_0^2$, and thus the first definition of the L\'evy-type process becomes inappropriate. This complication is of a technical kind, which is not related to our main goal to derive representation \eqref{p_rep} for the transition probability of the process.  Thus we adopt $\mathbf{H}^{cont}$ and  avoid further technical complications.
\end{rem}

\subsection{The main statements}

Our first main result uniquely identifies a locally $\alpha$-stable  L\'evy type process with given characteristics.

\begin{thm}\label{mainthm1}  Let $L$ be given by \eqref{L_gen1} and conditions $\mathbf{H}^{drift}$, $\mathbf{H}^{(\alpha)}$, $\mathbf{H}^{\nu}$(i), and $\mathbf{H}^{cont}$  hold true.
Then the martingale problem $(L, C_0^2)$ is well posed in $D(\ax)$ and, at the same time, the solution $X$ of this martingale problem is the unique Feller process, whose generator $A$ restricted to $C_0^\infty$ coincides with $L$. This process is strong Feller and possesses a transition probability density $p_t(x,y)$.
\end{thm}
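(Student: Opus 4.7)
The plan is to construct a candidate transition density $p_t(x,y)$ by the parametrix method outlined in the introduction and in Section \ref{s4}, and then to build the rest of the theorem around this object. As zero-order approximation I would take a frozen $\alpha$-stable density of the form $t^{-1/\alpha}g^{t,x}\bigl((y-\mathfrak{f}_t(x))/t^{1/\alpha}\bigr)$, where the regressor $\mathfrak{f}_t(x)$ is chosen to absorb the compensated drift $\wt b$: in the super-critical regime $\alpha<1$ this is not $x+b(x)t$ but is obtained by integrating the corresponding deterministic flow, as in \cite{Ku18}. The associated parametrix kernel $\Phi_t(x,y)=(L_x-L^{x,0}_y)p^0_t(x,y)$ contains three types of error: the Hölder defect of the principal symbol $(\lambda,\rho)$ with index $\zeta$; the defect of the flow-adapted $\wt b$ with index $\eta$, controlled by the balance condition $\alpha+\eta>1$; and the residual kernel $\nu$, whose lower Blumenthal--Getoor index $\beta<\alpha$ is what makes the compensator of $\nu$ subordinate to the stable part. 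The key preparatory estimates -- upper bounds for $p^0_t$ and for $\Phi_t$ in the $C_\infty$ operator norm (i.e.\ $\sup_x\int|\cdot|\,dy$) -- should follow from direct $\alpha$-stable density calculus combined with the above indices.

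Once these bounds are at hand, I would assemble $p_t(x,y)=\sum_{k\geq 0}p^0\star\Phi^{\star k}$, showing convergence of the series in the operator norm on $C_\infty$. Pointwise convergence is hopeless in general (Example \ref{ex1} already has unbounded marginals), so this is exactly the $L_1$-in-$y$ scheme advertised in the introduction. Conservation $\int p_t(x,y)\,dy=1$ and positivity would be obtained by smooth truncation: approximate the kernel $\mu$ by bounded-support L\'evy kernels $\mu^{(m)}$, run the same parametrix on each level -- where the resulting process clearly exists by Courr\`ege-type classical arguments, is conservative and has a non-negative density -- and pass to the limit in the operator norm to transfer these two properties to $p_t$. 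Next I would define $P_tf(x):=\int f(y)p_t(x,y)\,dy$ and verify it is a Feller semigroup: $P_t\colon C_\infty\to C_\infty$ uses the continuity in $x$ of the parametrix series (for which $\mathbf{H}^{cont}$ together with \eqref{cond_ups_large} is responsible), the semigroup identity follows from Chapman--Kolmogorov, which itself is read off from the parametrix equation $p=p^0+p\star\Phi$, and strong continuity at $t=0$ follows from $P_tf\to f$ uniformly for $f\in C^2_0$ by a direct estimate using $Lf\in C_\infty$. The strong Feller property is built into the construction: $x\mapsto p_t(x,\cdot)$ is continuous in $L_1$ because each term of the series is.

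With the Feller semigroup in hand, the remaining task is to identify its generator $A$ and to tie everything to the martingale problem. For $f\in C^2_0$ I would show directly from the series representation that $\frac{d}{dt}P_tf\big|_{t=0}=Lf$ uniformly in $x$; combined with $Lf\in C_\infty$ (which is exactly where $\mathbf{H}^{cont}$ and the continuity of $b$ re-enter), this places $C^2_0$ in the Feller domain $\dom A$ and gives $A|_{C_0^2}=L$. Uniqueness of a Feller extension of $L|_{C^2_0}$ then reduces to showing that $C^2_0$ is a core, which by the standard Hille--Yosida/Dynkin argument follows from the range condition $(\lambda-L)(C^2_0)$ being dense in $C_\infty$ for some $\lambda>0$; this is already implicit in the parametrix construction through the resolvent identity $R_\lambda=\int_0^\infty e^{-\lambda t}P_t\,dt$. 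For the martingale problem, existence of a solution for any starting law is automatic from the Feller process, while uniqueness is the standard Stroock--Varadhan-type transfer: any solution of $(L,C^2_0)$ has one-dimensional marginals determined by $P_t$, hence by Markov regularisation coincides in law with the Feller process in $D(\mathbb{R}^+)$.

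The single step I expect to be hardest is the operator-norm control of the parametrix series in the super-critical regime. The balance condition $\alpha+\eta>1$ is delicate: one must propagate the Hölder exponent $\eta$ of $\wt b$ through the small-time scaling of the stable kernel so that the induction on convolution powers $\Phi^{\star k}$ yields a summable geometric-type bound, with factors of the form $t^{(\alpha+\eta-1)/\alpha}$ and $t^{(\alpha-\beta)/\alpha}$ coming from the drift and the residual kernel respectively. Choosing $\mathfrak{f}_t(x)$ as a genuinely non-linear flow, rather than the naive $x+b(x)t$, is precisely what makes this bookkeeping close. Everything else -- positivity, conservativeness, Feller and strong Feller properties, and well-posedness of the martingale problem -- should follow by now-standard routes once the series is under control.
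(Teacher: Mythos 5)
Your overall skeleton (parametrix with $L_1$-in-$y$ convergence, then build a Feller semigroup and transfer uniqueness to the martingale problem) matches the paper's, but there are two genuine gaps that your route does not close.

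First, you propose to get positivity and conservativeness by truncating $\mu$ to bounded-support kernels $\mu^{(m)}$, noting that ``the resulting process clearly exists by Courr\`ege-type classical arguments, is conservative and has a non-negative density,'' and then passing to the limit. This begs the question: even after truncation the coefficients are merely H\"older, the kernel is still singular at the origin and of stable order $\alpha$, and $\alpha<1$ with non-trivial drift is allowed, so existence and uniqueness for the truncated generator is exactly the type of statement being proved. The paper avoids this circularity entirely: it proves a Positive Maximum Principle for \emph{approximate harmonic functions} (Proposition 5.1) and derives positivity, the Chapman--Kolmogorov identity, and the Dynkin formula \eqref{Dy1} (hence conservativeness) as three separate applications of this PMP. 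Your claim that the semigroup identity ``is read off from the parametrix equation $p=p^0+p\star\Phi$'' is also not correct: that equation is a fixed-point identity, not Chapman--Kolmogorov, and the paper has to work to deduce $P_{t+s}=P_tP_s$ via the PMP.

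Second, and closely related: you propose to ``show directly from the series representation that $\frac{d}{dt}P_tf|_{t=0}=Lf$ uniformly in $x$.'' The obstacle the paper must circumvent is precisely that $p_t(x,y)$, produced by the $L_1$-in-$y$ series, is \emph{not} known to be $C^1$ in $t$ or $C^2_\infty$ in $x$; the derivatives of the zero-order term blow up as $t\to0$ (see \eqref{g_bound_x}, \eqref{g_bound_xx}) and cannot be pushed through the series. This is why the paper introduces the $\eps$-shifted approximate fundamental solutions $p_{t,\eps}$ in \eqref{pe} and the entire ``approximate harmonic function'' machinery. Without that device, you have no way to apply $L_x$ or $\partial_t$ to $P_tf$, and the uniqueness transfer for the martingale problem (where you also gloss over the unbounded-drift localization handled in the paper with the cutoffs $\phi_R$ and the stopping time $\tau^R$) does not go through. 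A smaller point: the paper's zero-order approximation $p^0_t(x,y)=h^{t,y}(\kappa_t(y)-x)$ freezes the stable parameters and flow at the \emph{terminal} point $y$ and uses the reverse flow $\kappa_t$; this backward freeze is essential for the cancellation \eqref{arrow_id}, whereas your forward-frozen $t^{-1/\alpha}g^{t,x}((y-\mathfrak{f}_t(x))/t^{1/\alpha})$ is the principal term of the final representation \eqref{p_rep1} but not the parametrix seed, and the two differ by the nontrivial estimate \eqref{R_tilde_bound}.
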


Next, we provide several versions of the representation \eqref{p_rep} with different types of bounds on the residual kernel $R_t(x,y)$,  depending on the actual assumptions imposed in the characteristics of the process. Following the two-stage scheme outlined in the Introduction, we first do this under the basic set of conditions   $\mathbf{H}^{drift}$, $\mathbf{H}^{(\alpha)}$, $\mathbf{H}^{\nu}$ (i), an then discuss modifications  under additional assumptions.

Let us introduce more notation. By $g^{(\lambda,\rho,\upsilon)}(w)$ we denote the density of the $\alpha$-stable distribution  with the intensity $\lambda$, skewness $\rho$, and a shift $\upsilon$:
\be\label{stable_den}
g^{(\lambda,\rho,\upsilon)}(w)={1\over 2\pi}\int_\Re e^{-iw\xi+\Psi^{(\lambda,\rho,\upsilon)}_\alpha(\xi)}\, d\xi,
\ee
\be\label{stable_psi}
 \Psi^{(\lambda,\rho, \upsilon)}_\alpha(\xi)= i\xi\upsilon+\int_{\Re}\Big(e^{iu\xi}-1-iu\xi 1_{|u|\leq 1}\Big)\mu^{(\alpha,\lambda,\rho)}(du).
\ee
Next, we denote
$$
\delta_\eta={\eta+\alpha-1\over \alpha}>0,\quad  \delta_\zeta={\zeta\over \alpha}>0, \quad \delta_\beta={\alpha-\beta\over \alpha}>0, \quad \delta_{\eta,\zeta,\beta}=\min(\delta_\eta,\delta_\zeta,\delta_\beta);
$$
note that the positivity of $\delta_\eta$ is just the balance condition \eqref{balance}. We fix (arbitrary) positive $\delta< \delta_{\eta,\zeta,\beta}.$ We also fix (arbitrary) $T>0$ and furthermore consider $t\leq T$, only. Denote
\be\label{m_tb_t}
m_t^{\mu}(x)=\int_{t^{1/\alpha<|u|\leq 1}}u\mu(x,du),\quad b_t(x)=b(x)-m_t^{\mu}(x),
\ee
the \emph{partial compensator} of the kernel \eqref{mu_decomp} with the truncation level $t^{1/\alpha}$, and \emph{partially compensated} drift coefficient, respectively. Define the corresponding \emph{mollified} coefficient
$$
 B_t(x)=\int_{\Re}b_t(x-z){1\over 2\sqrt{\pi} t^{1/\alpha}}e^{-z^2t^{-2/\alpha}}d z.
$$
This coefficient is chosen in such a way that
\be\label{b-B}
\sup_x|b_t(x)-B_t(x)|\leq Ct^{-1+1/\alpha+\delta},
\ee
\be\label{Lip_B}
\mathrm{Lip}\,(B_t)=\sup_{x\not=y}{|B_t(x)-B_t(y)|\over |x-y|}\leq Ct^{-1+\delta},
\ee
see Appendix \ref{sA1} (recall that $\delta<\delta_\beta<1$). We define $\chi_s(x), s\geq 0, x\in \Re$ as the solution to the Cauchy problem
\be\label{chi}
{d\over ds}\chi_s(x)= B_{s}(\chi_s(x)), \quad \chi_0(x)=x.
\ee
Note that by \eqref{Lip_B} the family of Lipschitz constants $\mathrm{Lip}\,(B_t), t>0$ is integrable on any finite segment,
thus $\chi_t(x)$ is uniquely defined by the classical Picard successful approximation procedure. We define
$$
\lambda_t(x)={1\over t}\int_0^t\lambda(\chi_s(x))\, ds, \quad \rho_t(x)={1\over t\lambda_t(x)}\int_0^t\lambda( \chi_s(x))\rho( \chi_s(x))\, ds;
$$
that is, $\lambda_t(x)$ and $\lambda_t(x)\rho_t(x)$ are the  averages of the functions $\lambda(\cdot)$, $\lambda(\cdot)\rho(\cdot)$ along the trajectory  $\chi_\cdot(x)$ on the segment $[0,t]$.
We also denote $\upsilon(x)=2\lambda(x)\rho(x)$,
\be\label{W-def}
W_\alpha(t;s)=t^{-1/\alpha}\int_{s^{1/\alpha}}^{ t^{1/\alpha}}\,{dr\over r^{\alpha}}=\left\{
                                                                                       \begin{array}{ll}
                                                                                         {1\over 1-\alpha}  t^{-1/\alpha} (t^{1/\alpha-1}-s^{1/\alpha-1}), & \alpha\not=1, \\
                                                                                           t^{-1}  (\log t-\log s), & \alpha=1,
                                                                                       \end{array}
                                                                                     \right.
\quad 0\leq s\leq t,
\ee
and put
$$
\upsilon_t(x)=\int_0^t\upsilon(\chi_s(x))W_\alpha(t;s)\, ds.
$$
Note that
\be\label{W_prob}
\int_0^tW_\alpha(t,s)\, ds=1;
\ee
that is,  $\upsilon_t(x)$ is also an average of $\upsilon(\cdot)$ along the trajectory   $\chi_\cdot(x)$, but with respect to a certain (non-uniform) probability distribution on $[0,t]$. We finally define
\be\label{g}
g^{t,x}(w)=g^{(\lambda_t(x), \rho_t(x), \upsilon_t(x))}(w),
\ee
the $\alpha$-stable density with the `$\chi$-averaged' parameters  $\lambda_t(x), \rho_t(x), \upsilon_t(x)$ defined above.

Now we are ready to state our second main result. Recall that we consider $t\in [0,T]$, where $T$ is arbitrary but fixed; the particular values of the constants $C$ below may depend on $T$ and particular choice of $\delta< \delta_{\eta,\zeta,\beta}.$

\begin{thm}\label{mainthm2} I. Let conditions $\mathbf{H}^{drift}$, $\mathbf{H}^{(\alpha)}$, $\mathbf{H}^{\nu}$(i), and $\mathbf{H}^{cont}$  hold true. Then
\be\label{p_rep1}
p_t(x,y)={1\over t^{1/\alpha}}g^{t,x}\left({y-\chi_t(x)\over t^{1/\alpha}}\right)+R_t(x,y),
\ee
where
\be\label{R_bound_int}
\sup_x\int_{\Re}|R_t(x,y)|\, dy\leq Ct^\delta.
\ee

 II. Assume in addition that for some $\delta_\nu>0$
\be\label{invert}
\sup_{w\in \Re}\left|t^{-1/\alpha}\int_\Re \nu\Big(x; \big\{|u|>t^{1/\alpha},|x+u-w|\leq t^{1/\alpha}\big\}\Big)\, dx\right|\leq Ct^{-1+\delta_\nu}.
\ee

Then
\be\label{R_bound_unif}
\sup_{x,y}|R_t(x,y)|\leq Ct^{-1/\alpha+\delta_\infty}, \quad \delta_\infty=\min(\delta,  \delta_\nu).
\ee

\end{thm}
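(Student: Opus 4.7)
The plan is to implement the parametrix method with the zero-order approximation
$$p^0_t(x,y) := t^{-1/\alpha}\, g^{t,x}\!\left(\frac{y-\chi_t(x)}{t^{1/\alpha}}\right),$$
and to control the ensuing Neumann series in two different norms, one for each part of the theorem. The discrepancy kernel
$$\Phi_t(x,y) := (\partial_t - L_x)\, p^0_t(x,y)$$
(where $L_x$ acts in the first spatial variable) splits naturally into four contributions, corresponding to replacing (a) the local parameters $\lambda(x), \rho(x), \upsilon(x)$ by their flow-averages $\lambda_t(x), \rho_t(x), \upsilon_t(x)$ under assumption $\mathbf{H}^{(\alpha)}$(i); (b) the partially compensated drift $b_t$ by its mollification $B_t$, whose gap is controlled by \eqref{b-B}; (c) $B_t(x)$ by $B_t(\chi_s(x))$ along the flow, which is controlled by the Lipschitz estimate \eqref{Lip_B} together with the balance condition \eqref{balance}; and (d) removing the residual kernel $\nu$, whose small-jump part has Blumenthal--Getoor index strictly smaller than $\alpha$ by \eqref{cond_ups_small}. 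Exploiting the standard $L_1$-type bounds for $\alpha$-stable densities and their spatial derivatives, one obtains the key integral estimate
$$\sup_{x\in\Re} \int_{\Re} |\Phi_t(x,y)|\, dy \le C\, t^{-1+\delta}.$$

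By the Duhamel/parametrix identity the true density admits the representation
$$p_t(x,y) = p^0_t(x,y) + \int_0^t \!\!\int_{\Re} p^0_{t-s}(x,z)\, \Psi_s(z,y)\, dz\, ds,\qquad \Psi := \sum_{n\ge 1} \Phi^{\circledast n},$$
where $\circledast$ denotes the space-time convolution. For Part~I, the $L_1(dy)$-bound on $\Phi$ self-iterates through the beta-function identity $\int_0^t (t-s)^{-1+k\delta} s^{-1+\delta}\, ds = B(k\delta,\delta)\, t^{-1+(k+1)\delta}$, producing $\sup_x\int |\Phi^{\circledast n}_t(x,y)|\, dy \le C^n t^{-1+n\delta}/\Gamma(n\delta)$, a term of a convergent series. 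Since $\|p^0_{t-s}(x,\cdot)\|_{L_1(dy)} = 1$ by construction of $g^{t,x}$, the bound \eqref{R_bound_int} follows.

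For Part~II we must upgrade the control on $R_t = p^0\circledast \Psi$ to the uniform norm in $(x,y)$. The naive pointwise bound $p^0_t(x,y)\le C t^{-1/\alpha}$ would lose a factor $t^{-1/\alpha}$ at each convolution step, so the argument has to be refined. The additional hypothesis \eqref{invert} is precisely a convolution-smoothing estimate for the residual kernel $\nu$, the only source of irregularity that would otherwise prevent passage from $L_1$-in-$y$ to pointwise control: after one integration in the source variable its scaling improves by the factor $t^{\delta_\nu}$. Combining this with pointwise bounds for $g^{t,x}$ and its derivatives, the first convolution $(p^0\circledast\Phi)_t$ is shown to be bounded pointwise by $C t^{-1/\alpha + \delta_\infty}$, with $\delta_\infty = \min(\delta,\delta_\nu)$. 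All subsequent terms in $\Psi$ are then controlled by placing the new pointwise bound on the leftmost factor and iterating the $L_1(dy)$-bound on $\Phi$ already established in Part~I, which preserves the order $t^{-1/\alpha + \delta_\infty}$ and yields \eqref{R_bound_unif}.

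The main obstacle I anticipate lies in the sharp estimates for contributions (a) and (c): differentiating $p^0_t$ in $x$ produces not only spatial derivatives of the stable density but also parametric derivatives through the dependence of the characteristic exponent $\Psi^{(\lambda,\rho,\upsilon)}_\alpha$ on $\lambda_t(x), \rho_t(x), \upsilon_t(x)$, and these parametric derivatives must be estimated without destroying the scaling $t^{\delta_\zeta}$ (resp.\ $t^{\delta_\eta}$) offered by the Hölder hypotheses. This forces the specific choice of the averaged parameters and of the weight $W_\alpha(t;s)$; in particular, identity \eqref{W_prob} is crucial to recognise $\upsilon_t$ as a probabilistic average matching exactly the compensator produced by the $\alpha$-stable part of $L_x$ acting on $p^0_t$.
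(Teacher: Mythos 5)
Your plan sets up the parametrix with the zero-order kernel
$$p^0_t(x,y) := t^{-1/\alpha}\, g^{t,x}\!\left(\frac{y-\chi_t(x)}{t^{1/\alpha}}\right),$$
whose $\alpha$-stable parameters $\lambda_t(x),\rho_t(x),\upsilon_t(x)$ and the flow $\chi_t(x)$ all depend on the \emph{source} variable $x$. This choice breaks the method at the very first step. To form the differential error term $\Phi_t=(\partial_t-L_x)p^0_t$ you must apply $L_x$, and with your $p^0_t$ the $x$-dependence does not enter only through a translation argument: $L_x$ (and in particular $\partial_x$) hits $\lambda_t(x)$, $\rho_t(x)$, $\upsilon_t(x)$, $\chi_t(x)$. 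The first three are averages of $\lambda(\chi_s(x))$, $\lambda\rho(\chi_s(x))$, $\upsilon(\chi_s(x))$ and under $\mathbf{H}^{(\alpha)}$ the coefficients $\lambda,\rho$ are only H\"older continuous, so these parametric derivatives simply do not exist. You acknowledge this obstacle in your final paragraph, but the resolution you propose (``the specific choice of the averaged parameters and of the weight $W_\alpha$'') does not address it: the averaging is needed to match the compensator of the $\alpha$-stable part, not to make the parameters differentiable. The paper instead takes a different, ``backward'' zero-order approximation, $p_t^0(x,y)=h^{t,y}(\kappa_t(y)-x)$ as in \eqref{p^0}--\eqref{p^01}, in which the parameters are averaged along the reverse flow $\kappa_s(y)$ and $x$ enters only through the translation $\kappa_t(y)-x$ in the argument of the stable density. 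This makes $L_x$ act cleanly, yields the key identity \eqref{arrow_id}, and is what allows the decomposition \eqref{Phi_decomp} and the bound \eqref{Phi_bound}. The representation \eqref{p_rep1} in terms of the forward-parametrized $g^{t,x}$ is then recovered only \emph{a posteriori}: the difference $p_t^{main}-p_t^0$ is estimated separately in \eqref{R_tilde_bound}, and $R_t=r_t+(p_t^0-p_t^{main})$ is bounded by combining \eqref{r_bound} with \eqref{R_tilde_bound_int}.

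A secondary imprecision concerns Part~II. You propose to bound the Neumann tail pointwise by ``placing the new pointwise bound on the leftmost factor and iterating the $L_1(dy)$-bound on $\Phi$''. Once a pointwise bound is placed on one factor of a chained convolution, the integrals to the \emph{right} of it must be closed by the dual, integral-in-$x$, estimate $\sup_y\int|\Phi_t(x,y)|\,dx\le Ct^{-1+\delta_\infty}$ (this is exactly what assumption \eqref{invert} buys; see \eqref{full}), while only the integrals to the \emph{left} use the integral-in-$y$ bound \eqref{Phi_bound}. The paper's argument in Section~\ref{s6} places the sup bound on the factor with the \emph{largest} time increment (so that the extracted $\tau_j^{-1/\alpha}$ is comparable to $t^{-1/\alpha}$) and then uses the two $L_1$ bounds on the two sides of it; iterating only the $L_1(dy)$ bound is not enough to close the chain in the remaining free variable. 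Your remark about \eqref{invert} improving the scaling ``after one integration in the source variable'' points in the right direction, but the argument needs the dual bound explicitly and at every step to the right of the sup-bounded factor.
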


Our last main result provides a point-wise kernel estimate for the residual term $R_t(x,y)$ under the stronger assumption $\mathbf{H}^{\nu}$ (ii). Denote
\be\label{abg}
G^{(\alpha, \beta, \gamma)}_t(x,y)=    \left\{                                 \begin{array}{ll}{t^{-1/\alpha}},& |y-x|\leq (t^{1/\alpha}\wedge 1),\\
                                    {t^{\beta/\alpha}} |y-x|^{-\beta-1}, & (t^{1/\alpha}\wedge 1)<|y-x|\leq 1,\\
                                 {t^{\beta/\alpha}} |y-x|^{-\gamma-1}, & |y-x|>1.
                                    \end{array}
                                  \right.
\ee

\begin{thm}\label{mainthm3} Let conditions $\mathbf{H}^{drift}$, $\mathbf{H}^{(\alpha)}$, $\mathbf{H}^{\nu}$ (ii), and $\mathbf{H}^{cont}$  hold true. Then
\be\label{R_bound_point}
|R_t(x,y)|\leq Ct^\delta G^{(\alpha, \alpha, \alpha)}_t(\chi_t(x),y)+Ct^{\delta'}G^{(\alpha, \beta', \gamma')}_t(\chi_t(x),y)
\ee
with
\be\label{primes}
\beta'=\max(\beta, \alpha- \zeta),\quad \gamma'=\min(\alpha, \gamma), \quad \delta'={\alpha-\beta'\over \alpha}>0.
\ee
\end{thm}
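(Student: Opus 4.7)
The plan is to refine the analysis used for Theorem~\ref{mainthm2} from $L_1$-in-$y$ control to pointwise kernel control by exploiting the explicit density estimate \eqref{reg_cond} in $\mathbf{H}^\nu$(ii). The parametrix construction of Section \ref{s4} presents $R_t(x,y)$ as a Neumann series of iterated convolutions of a differential error kernel $\Phi_s(z,y)$ against the zero-order approximation $p^{(0)}_t(x,y) = t^{-1/\alpha} g^{t,x}((y-\chi_t(x))/t^{1/\alpha})$. Under the weaker $\mathbf{H}^\nu$(i), only $L_1$-in-$y$ bounds for $\Phi$ are available, but under $\mathbf{H}^\nu$(ii) each contribution of the residual kernel can be controlled pointwise by a function of class $G^{(\alpha,\beta,\gamma)}_t$ defined in \eqref{abg}.

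The first step is to split the error kernel $\Phi_t = \Phi^{(\alpha)}_t + \Phi^{\nu}_t$ according to the decomposition \eqref{mu_decomp}, and to derive pointwise kernel bounds on each part. For the principal-part contribution, the $\zeta$-H\"older continuity of $\lambda,\rho$ in $\mathbf{H}^{(\alpha)}$(i) combined with standard derivative estimates on the $\alpha$-stable density $g^{t,x}$ yields
$$|\Phi^{(\alpha)}_t(x,y)| \leq C t^{-1+\delta_\zeta}\, G^{(\alpha,\alpha,\alpha)}_t(\chi_t(x),y),$$
where the gain $t^{\delta_\zeta}$ captures the H\"older freezing error and the $G^{(\alpha,\alpha,\alpha)}$-type kernel captures the pure $\alpha$-stable scaling. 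For the residual contribution, the density bound \eqref{reg_cond} permits a direct pointwise estimate of the integrand in the definition of $\Phi^\nu_t$, leading to
$$|\Phi^{\nu}_t(x,y)| \leq C t^{-1+\delta'}\, G^{(\alpha,\beta',\gamma')}_t(\chi_t(x),y).$$
The saturation rules $\beta' = \max(\beta,\alpha-\zeta)$ and $\gamma' = \min(\alpha,\gamma)$ emerge naturally: convolution against the stable density cannot produce tails heavier than $|u|^{-\alpha-1}$ (hence the $\min$ with $\alpha$), while the H\"older correction from the principal part manufactures an additional small-scale singularity of order $\alpha-\zeta$ when superposed with the residual kernel.

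The second step is to establish a convolution lemma showing that time-integrated convolutions of the scaled $\alpha$-stable zero-order density against $G^{(\alpha,\beta_*,\gamma_*)}_t$-type kernels remain within the same class, in the spirit of the convolution calculi of \cite{Ko89,Ku18} but adapted to the flow-shifted kernels $G^{(\alpha,\beta_*,\gamma_*)}_s(\chi_s(\cdot),\cdot)$ arising here. Summing the Neumann series then yields the two-term bound \eqref{R_bound_point}: iterates that involve the principal-part contribution at least once accumulate into the $t^\delta\, G^{(\alpha,\alpha,\alpha)}$ summand, while iterates generated exclusively from the residual error pile up into the $t^{\delta'}\, G^{(\alpha,\beta',\gamma')}$ summand, with convergence ensured by the positive exponents $\delta,\delta'>0$.

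The main obstacle will be the bookkeeping in the convolution lemma in the super-critical regime $\alpha<1$, where convolutions take place along the non-trivial flow $\chi_\cdot$ rather than along a fixed translation. This is resolved by exploiting the Lipschitz bound \eqref{Lip_B} on the mollified drift $B_t$, which implies that shifts along $\chi$ are comparable to Euclidean shifts uniformly on any interval $s\leq t\leq T$, so that the composed kernels $G^{(\alpha,\beta_*,\gamma_*)}_s(\chi_s(z),\cdot)$ behave, up to universal constants, as in the translation-invariant stable setting, allowing the iterative scheme to close.
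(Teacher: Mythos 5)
Your high-level plan — split $\Phi$, derive pointwise kernel bounds for each piece, establish a sub-convolution lemma for the flow-shifted kernels $G^{(\alpha,\beta,\gamma)}_t(\cdot,\kappa_t(\cdot))$, and sum the Neumann series — is indeed the route the paper takes, and the observation that the flow $\chi$ must be compared to Euclidean shifts via the Lipschitz bound \eqref{Lip_B} is the right resolution of the super-critical difficulty. However, two of the specific bounds you state are incorrect, and a third step is missing.

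First, the claimed estimate
$$|\Phi^{(\alpha)}_t(x,y)| \leq C t^{-1+\delta_\zeta}\, G^{(\alpha,\alpha,\alpha)}_t(\chi_t(x),y)$$
is false. The H\"older continuity of $\lambda,\rho$ produces the spatial factor $|x-\kappa_t(y)|^\zeta\wedge 1$, \emph{not} the uniform factor $t^{\delta_\zeta}$. Converting $|w|^\zeta\,G^{(\alpha)}(w)$ into a time-rescaled form yields $t^{\delta_\zeta}G^{(\alpha-\zeta)}(w/t^{1/\alpha})$ only up to a polynomial worsening of the kernel, so the correct bound is $Ct^{-1+\delta_\zeta}G^{(\alpha,\alpha-\zeta,\alpha)}_t(x,\kappa_t(y))$. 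This is \emph{precisely} where the component $\alpha-\zeta$ of $\beta'=\max(\beta,\alpha-\zeta)$ originates; it is not, as your text suggests, a byproduct of convolving the stable density against the residual kernel. You acknowledge the $\alpha-\zeta$ singularity in a later sentence, but your displayed bound for $\Phi^{(\alpha)}$ contradicts it — if $\Phi^{(\alpha)}$ really obeyed a pure $G^{(\alpha,\alpha,\alpha)}$ bound, the theorem would hold with $\beta'=\beta$.

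Second, your bookkeeping for the summed series is reversed. The $G^{(\alpha,\alpha,\alpha)}$ kernel $H^1$ is contributed by $\Phi^{drift}$ and the small/compensated parts of $\Phi^\nu$, which you omit from your decomposition; $\Phi^{(\alpha)}$ and $\Phi^{\nu,large,+}$ both go into the weaker kernel $H^2=G^{(\alpha,\beta',\gamma')}$. When summing the iterates one uses $H^1\le H^2$: chains composed exclusively of $H^1$-factors stay in the $t^\delta H^1$ summand; as soon as a chain contains at least one $H^2$-factor, the whole contribution is dominated by $t^{\delta'}H^2$. Your statement routes the principal-part iterates into the sharper $H^1$ term, which is not tenable.

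Third, you identify the zero-order parametrix with $p^{main}_t(x,y)=t^{-1/\alpha}g^{t,x}((y-\chi_t(x))/t^{1/\alpha})$, whereas the construction is built around $p^0_t(x,y)=h^{t,y}(\kappa_t(y)-x)$, which is a \emph{different} function (parameters averaged along the backward flow $\kappa$, evaluated at $y$, not at $x$). The kernel bound on the Neumann remainder $r_t=p_t-p^0_t$ must therefore be supplemented by a separate estimate for $|p^{main}_t-p^0_t|$ (this is \eqref{R_tilde_bound}), which compares the two flows and two families of averaged parameters. Without this step the argument closes around the wrong pivot and does not yield the stated $R_t=p_t-p^{main}_t$.
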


As a direct corollary, we get an upper bound for the entire transition probability density $p_t(x,y)$. Denote
$
G^{(\alpha)}(x)=|x|^{-\alpha-1}\wedge 1.
$
\begin{cor}\label{cor_p} Let conditions $\mathbf{H}^{drift}$, $\mathbf{H}^{(\alpha)}$, $\mathbf{H}^{\nu}$ (ii), and $\mathbf{H}^{cont}$  hold true. Then
\be\label{p_upper bound}
p_t(x,y)\leq {C\over t^{1/\alpha}}G^{(\alpha)}\left({y-\chi_t(x)\over t^{1/\alpha}}\right)+Ct|y-\chi_t(x)|^{-\gamma'-1}1_{|y-\chi_t(x)|>1}, \quad t\in (0, T].
\ee
\end{cor}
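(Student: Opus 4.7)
The plan is to combine the representation from Theorem \ref{mainthm2}(I) with the kernel estimate from Theorem \ref{mainthm3} and to dominate every resulting piece by one of the two terms on the right-hand side of \eqref{p_upper bound}. Write $z=y-\chi_t(x)$ throughout.

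First I would handle the principal term $t^{-1/\alpha}g^{t,x}(z/t^{1/\alpha})$ in \eqref{p_rep1}. The parameters of $g^{t,x}=g^{(\lambda_t(x),\rho_t(x),\upsilon_t(x))}$ are uniformly controlled: $\lambda_t(x)\in[\lambda_{\min},\lambda_{\max}]$ by $\mathbf{H}^{(\alpha)}$(ii), $\rho_t(x)\in[-1,1]$, and $|\upsilon_t(x)|\le 2\lambda_{\max}$ by \eqref{W_prob}. A standard Fourier inversion argument applied to \eqref{stable_den}--\eqref{stable_psi} (differentiate in $w$ and integrate by parts to trade decay of $g^{(\lambda,\rho,\upsilon)}$ at infinity against regularity of $\Psi_\alpha^{(\lambda,\rho,\upsilon)}$) yields the uniform stable bound
\[
g^{(\lambda,\rho,\upsilon)}(w)\le C\bigl(1\wedge |w-\upsilon|^{-\alpha-1}\bigr),
\]
with $C$ depending only on $\lambda_{\min},\lambda_{\max}$. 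Since $|\upsilon_t(x)|$ is bounded, replacing $|w-\upsilon_t(x)|$ by $|w|$ costs only a multiplicative constant (split $|w|\le 4\lambda_{\max}$ versus $|w|>4\lambda_{\max}$), so $g^{t,x}(w)\le C G^{(\alpha)}(w)$. Substituting $w=z/t^{1/\alpha}$ gives the first contribution to \eqref{p_upper bound}.

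Next I would absorb the residual term $R_t(x,y)$ using Theorem \ref{mainthm3}. Consider first $t^\delta G^{(\alpha,\alpha,\alpha)}_t(\chi_t(x),y)$. On $|z|\le t^{1/\alpha}$ this equals $t^\delta t^{-1/\alpha}$, which is $\le C t^{-1/\alpha}G^{(\alpha)}(z/t^{1/\alpha})$ because $G^{(\alpha)}\equiv 1$ on the unit ball and $t\le T$. On $|z|>t^{1/\alpha}$ the bound is $t^\delta\cdot t|z|^{-\alpha-1}\le C t|z|^{-\alpha-1}$, and this equals $C t^{-1/\alpha}\cdot|z/t^{1/\alpha}|^{-\alpha-1}=C t^{-1/\alpha}G^{(\alpha)}(z/t^{1/\alpha})$. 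Hence the whole $G^{(\alpha,\alpha,\alpha)}_t$-piece is swallowed by the first term of \eqref{p_upper bound}.

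For the $t^{\delta'}G^{(\alpha,\beta',\gamma')}_t$-piece the key algebraic identity is $\delta'+\beta'/\alpha=1$. On $|z|\le t^{1/\alpha}\wedge 1$ one gets $t^{\delta'}t^{-1/\alpha}\le C t^{-1/\alpha}G^{(\alpha)}(z/t^{1/\alpha})$ as before. On $(t^{1/\alpha}\wedge 1)<|z|\le 1$ the value is $t^{\delta'+\beta'/\alpha}|z|^{-\beta'-1}=t|z|^{-\beta'-1}$; since $\beta'\le\alpha$ (from $\beta<\alpha$, $\zeta>0$ in \eqref{primes}) and $|z|\le 1$, we have $|z|^{-\beta'-1}\le |z|^{-\alpha-1}$, so this is again dominated by $C t^{-1/\alpha}G^{(\alpha)}(z/t^{1/\alpha})$. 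Finally on $|z|>1$ it equals $t^{\delta'+\beta'/\alpha}|z|^{-\gamma'-1}=t|z|^{-\gamma'-1}$, matching exactly the second term of \eqref{p_upper bound}. Summing all contributions yields \eqref{p_upper bound}.

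The only non-routine point is the uniform stable bound $g^{t,x}(w)\le C G^{(\alpha)}(w)$; everything else is bookkeeping on the regions defining $G^{(\alpha,\beta',\gamma')}_t$ and on the exponent identity $\delta'+\beta'/\alpha=1$. Since the averaged parameters lie in a fixed compact set of admissible $\alpha$-stable triplets (independent of $t\in(0,T]$ and $x$), the stable bound is a standard consequence of the representations \eqref{stable_den}--\eqref{stable_psi}, and no extra hypothesis is needed.
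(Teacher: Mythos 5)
Your proposal is correct and is exactly the computation the paper has in mind when it calls \eqref{p_upper bound} a ``direct corollary'' of Theorem \ref{mainthm3}: plug the kernel bound \eqref{R_bound_point} into the representation \eqref{p_rep1} and check region by region. One small economy: the uniform stable bound $g^{t,x}(w)\le C\,G^{(\alpha)}(w)$ does not need to be re-derived by Fourier inversion, since it is precisely \eqref{g_bound0} of Proposition \ref{pA4} (the averaged parameters $\lambda_t(x)\in[\lambda_{\min},\lambda_{\max}]$, $\rho_t(x)\in[-1,1]$, $|\upsilon_t(x)|\le 2\lambda_{\max}$ lie in the compact set for which that proposition is stated), so you can cite it directly. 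The remaining bookkeeping — the identity $\delta'+\beta'/\alpha=1$, the comparison $|z|^{-\beta'-1}\le|z|^{-\alpha-1}$ on $|z|\le 1$ since $\beta'<\alpha$, and the scaling identity $t|z|^{-\alpha-1}=t^{-1/\alpha}G^{(\alpha)}(z/t^{1/\alpha})$ for $|z|>t^{1/\alpha}$ — is exactly right.
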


The following two examples show that there is a the substantial difference between three types of the estimates given above: (i)  integral-in-$y$ (Theorem \ref{mainthm2}, I); (ii)   uniform-in-$(x,y)$ (Theorem \ref{mainthm2}, II); (iii) kernel (Theorem \ref{mainthm3}). The first example shows that, for singular kernels $\mu(x;du)$, the estimates (ii), (iii) may simply fail.

\begin{ex}\label{ex1}  Let the `nuisance part' of the noise correspond to the possibility of the process $X_t$ to jump, at  Poisson time instants, to the point $0$; that is, $
\nu(x,du)=\delta_{-x}(du).$
 Then
$$
p_t(x,y)\geq e^{-1}\int_0^tp_{t-s}^{(\alpha)}(0,y)\, ds,
$$
where $p_{t}^{(\alpha)}(x,y)$ denotes the transition probability density for the process with the kernel $\mu^{(\alpha)}(x,du)$. If $b\equiv 0$, $\lambda\equiv 1, \rho\equiv 0$, then
$$
p_{t}^{(\alpha)}(x,y)\asymp t^{-1/\alpha}G^{(\alpha)}\left({y-x\over t^{1/\alpha}}\right),
$$
and thus for $\alpha\leq 1$ the function $p_t(x,y)$ is unbounded at the vicinity of the point $y=0$.
\end{ex}

The difference between the kernel and uniform-in-$(x,y)$ estimates is more subtle. Of course, the kernel estimates yield both the integral-in-$y$ and uniform-in-$(x,y)$ estimates, but the cost is that the (strong) condition $\mathbf{H}^{\nu}$ (ii) is needed, which in particular requires $\nu(x,du)$ to be smooth. This may be too restrictive when a model with a microstructural residual noise in the spirit of \cite{AJ07} is considered. Our second example shows that the additional assumption \eqref{invert}, which guarantees uniform-in-$(x,y)$ bounds, is substantially  weaker than $\mathbf{H}^{\nu}$ (ii), and  can hold true for  singular nuisance kernels.

\begin{ex}\label{ex2} Let $\nu(x,du)=\nu(du)$, then simply by the Fubini theorem and  \eqref{cond_ups_small} we have
$$
\ba
&\left|t^{-1/\alpha}\int_\Re \nu\Big(x; \big\{|u|>t^{1/\alpha},|x+u-w|\leq t^{1/\alpha}\big\}\Big)\, dx\right|
\\&\hspace*{2cm}\leq \int_{|u|>t^{1/\alpha}} \left(t^{-1/\alpha}\int_{|x+u-w|\leq t^{1/\alpha}}\, dx\right)\,|\nu|(du)\leq Ct^{-\beta/\alpha}=Ct^{-1+\delta_\beta}.
\ea
$$
More generally, let $\nu(x;du)$ possess a bound
$$
|\nu|(x,du)\leq \nu'(v:c(x,v)\in du),
$$
where $\nu'$ is a L\'evy measure satisfying \eqref{cond_ups_small} and $c(x,u)$ satisfies $|c(x,u)|\leq C|u|$, for each $u$ the function
$x+c(x,u)$  is $C^1$ and is invertible (in $x$), and
\be\label{invert_C1}
|1+c'_x(x,u)|^{-1}\leq C.
\ee
Then we can obtain  \eqref{invert} first changing the variables $x'=x+c(x,u)$ and then using the Fubini theorem and \eqref{cond_ups_small} in the same way as above.
\end{ex}

\subsection{SDEs} For the reader's convenience, we formulate separately the version of the above  results in the case where the process $X$ is a solution to an SDE. Consider the SDE
\be\label{SDE2}
d X_t=b(X_t)\,d t+\sigma(X_{t-})\, dZ_t+\int_{|u|\leq 1}c(X_{t-}, u)\wt N(d t, du)+\int_{|u|> 1}c(X_{t-}, u)N(d t, du)
\ee
where $Z$ is an $\alpha$-stable process, $N(dt, du)$ is an independent of $Z$ Poisson point measure with the compensator $dt\nu'(du)$, and  $N(dt, du)=N(dt, du)-dt\nu'(du)$ is the corresponding martingale measure. Assume that $Z$ has the characteristic triplet $(0, 0, \mu^{(\alpha;  \lambda, \rho)})$ and $|c(x,u)|\leq C|u|$.
Denote
$$
\wt b(x)=  b(x)-1_{\alpha<1}{2\lambda\rho\over 1-\alpha}\sigma(x)-1_{\beta<1}\int_{|u|\leq 1}c(x,u)\, \nu'(du).
$$
\begin{prop} Let the following assumptions hold:
\begin{itemize}
  \item $\wt b$ satisfies $\mathbf{H}^{drift}$;
  \item $\sigma$ is $\zeta$-H\"older continuous and for some $c_1, c_2>0$
  $$
   c_1\leq \sigma(x)\leq c_2;
   $$
  \item for some $\beta<\alpha$,
 $$
 \nu'(x; \{|u|>r\}) \leq Cr^{-\beta}, \quad r\in (0, 1];
$$
\item the functions $b(x)$ and  $x\mapsto c(x,\cdot)\in L_1((u^2\wedge 1)\nu'(du))$ are continuous.
\end{itemize}

Then the SDE \eqref{SDE2} has unique weak solution $X$, and this solution is a strong Feller Markov process. The transition probability of this process has a density $p_t(x,y)$ which has representation \eqref{p_rep}, where
\begin{itemize}
  \item the regressor $\mathfrak{f}_t(x)=\chi_t(x)$ is defined by \eqref{chi} with $B_t(x)$ which corresponds to
  $$
  b_t(x)=b(x)-2\lambda\rho\sigma(x)\int_{t^{1/\alpha}}^1{du\over u^\alpha}-\int_{t^{1/\alpha}<|u|\leq 1}c(x,u)\, \nu'(du)
  $$
  \item the density of the $\alpha$-stable innovation term  has the form
    $g^{t,x}(w)=g^{(\lambda_t(x), \rho, \upsilon_t(x))}(w)$ with
        $$
\lambda_t(x)={\lambda\over t}\int_0^t\sigma(\chi_s(x))^\alpha\, ds, \quad \upsilon_t(x)=2\lambda\rho\int_0^t\sigma(\chi_s(x))^\alpha W_\alpha(t;s)\, ds;
$$
  \item the residual term $R_t(x,y)$ satisfies \eqref{R_bound_int}.
 \end{itemize}
  In addition,
  \begin{itemize}
      \item if the function $x+c(x,u)$  is $C^1$, is invertible in $x$ and \eqref{invert_C1} holds, then the residual term $R_t(x,y)$ satisfies \eqref{R_bound_unif};
  \item  if
  $$
  {\nu'(du)\over du}=C|u|^{-\beta-1}1_{|u|\leq 1}+C|u|^{-\gamma-1}1_{|u|>1},
  $$
  $c(x, \cdot)\in C^1$ and
  $$\inf_{x,u}|c'_u(x,u)|>0,$$
  then \eqref{R_bound_point} holds.
\end{itemize}
\end{prop}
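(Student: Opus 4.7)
The plan is to reduce the statement to a direct application of Theorems \ref{mainthm1}--\ref{mainthm3} by identifying the infinitesimal generator of the SDE \eqref{SDE2} as a locally $\alpha$-stable L\'evy-type operator of the form \eqref{L_gen1}, verifying the standing hypotheses on the resulting characteristics, and then transcribing the conclusions of the main theorems back into SDE language.

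\emph{Generator identification.} Applying It\^o's formula to $f(X_t)$ for $f\in C_0^2$, using that $X$ has no continuous martingale part and splitting the jumps of $X$ into those coming from $Z$ and those coming from the independent Poisson measure $N$, one finds that $X$ solves the martingale problem for
\[
\cL f(x) = b(x)f'(x) + \int_\Re \big[f(x+\sigma(x)z) - f(x) - \sigma(x)zf'(x)1_{|z|\leq 1}\big]\mu^{(\alpha;\lambda,\rho)}(dz) + \int_\Re \big[f(x+c(x,u))-f(x) - c(x,u)f'(x)1_{|u|\leq 1}\big]\nu'(du).
\]
Changing variable $v=\sigma(x)z$ in the stable integral identifies its jump kernel as $\mu^{(\alpha)}(x;dv) = \mu^{(\alpha;\lambda\sigma(x)^\alpha,\rho)}(dv)$; the push-forward of $\nu'$ under $u\mapsto c(x,u)$ produces the residual kernel $\nu(x;dv)$. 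Shifting both the truncation $|v|\leq\sigma(x)$ (from the stable part) and $|c^{-1}(x,v)|\leq 1$ (from the residual part) to the common truncation $|v|\leq 1$ of \eqref{L_gen1} generates bounded, H\"older drift corrections which may be absorbed into an effective drift $b_{\mathrm{eff}}$. A direct computation of $\int_{|v|\leq 1}v\,\mu^{(\alpha)}(x;dv)$ and $\int_{|v|\leq 1}v\,\nu(x;dv)$ then shows that the abstract compensated drift of Section \ref{s3}, computed from $b_{\mathrm{eff}}$, coincides with the $\wt b$ given in the proposition.

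\emph{Verification and application.} The bounds $c_1\leq\sigma\leq c_2$ and the $\zeta$-H\"older continuity of $\sigma$ give the boundedness and H\"older continuity of $\lambda(x)=\lambda\sigma(x)^\alpha$, which is $\mathbf{H}^{(\alpha)}$. The estimate $|c(x,v)|\leq C|v|$ transfers the tail bound of $\nu'$ to $\nu(x;\cdot)$, giving \eqref{cond_ups_small}; the tail condition \eqref{cond_ups_large} follows analogously, so $\mathbf{H}^{\nu}$(i) holds. Continuity of $b$ together with continuity of $x\mapsto c(x,\cdot)$ in $L_1((v^2\wedge 1)\nu'(dv))$ gives $\mathbf{H}^{cont}$ by a truncation plus dominated convergence argument on $f\in C$ with compact support in $\Re\setminus\{0\}$, and $\mathbf{H}^{drift}$ is precisely the hypothesis made on $\wt b$. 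Theorem \ref{mainthm1} then provides the well-posedness of the martingale problem and produces the unique (strongly) Feller Markov process with the prescribed generator; since every weak solution of \eqref{SDE2} solves this MP, uniqueness in law for \eqref{SDE2} follows, and existence of a weak solution is standard (either by Euler-type approximation or by a Jacod-type representation of the constructed Feller process). Theorem \ref{mainthm2}\,I then yields \eqref{p_rep} with $L_1$-bound \eqref{R_bound_int}; substituting $\lambda(x)=\lambda\sigma(x)^\alpha$ and $\rho(x)\equiv\rho$ into the defining formulas \eqref{m_tb_t}--\eqref{g} produces the explicit $b_t$, regressor $\chi_t$, $\lambda_t(x)$, $\upsilon_t(x)$, and innovation density $g^{t,x}(w)=g^{(\lambda_t(x),\rho,\upsilon_t(x))}(w)$ stated in the proposition. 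For \eqref{R_bound_unif}, condition \eqref{invert} is verified as in Example \ref{ex2}: the $C^1$ invertibility of $x\mapsto x+c(x,u)$ and the Jacobian bound \eqref{invert_C1} permit the change of variable $x'=x+c(x,u)$ in the inner integral, after which Fubini and the tail bound on $\nu'$ give the required estimate. For \eqref{R_bound_point}, when $c(x,\cdot)\in C^1$ with $\inf|c'_u|>0$ the push-forward $\nu(x;dv)$ has density $q_\nu(x,v)=(\nu'/du)(c^{-1}(x,v))/|c'_u(x,c^{-1}(x,v))|$, and the prescribed form of $\nu'/du$ together with $|c(x,u)|\asymp|u|$ yields \eqref{reg_cond}, so Theorem \ref{mainthm3} applies verbatim.

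\emph{Main obstacle.} The principal technical work lies in the bookkeeping of drift corrections in the generator identification: the natural compensation truncations for the stable part ($|v|\leq\sigma(x)$) and for the push-forward residual part ($|c^{-1}(x,v)|\leq 1$) both differ from the common truncation $|v|\leq 1$ used in \eqref{L_gen1}, and the resulting corrections must be tracked explicitly and combined with the ones hidden in $\wt b$ to recover the clean formula in the proposition. A secondary point is the existence of a weak solution to \eqref{SDE2}: having produced the Feller Markov process via the martingale problem, one needs a Jacod-type representation theorem to realize its jump structure via the specified pair of independent driving noises $(Z,N)$, which is standard but does require an enlargement of the probability space.
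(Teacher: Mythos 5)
Your proposal is correct and follows essentially the same route as the paper's own (very terse) justification: write down the L\'evy-type operator that corresponds to the SDE via It\^o's formula, change variables $v=\sigma(x)z$ to expose the state-dependent $\alpha$-stable intensity $\lambda(x)=\lambda\sigma(x)^\alpha$ with constant skewness $\rho$, push $\nu'$ forward under $u\mapsto c(x,u)$ to obtain the residual kernel, re-truncate to $|v|\le 1$ while absorbing the resulting drift corrections into an effective drift, then verify $\mathbf{H}^{drift}$, $\mathbf{H}^{(\alpha)}$, $\mathbf{H}^{\nu}$, $\mathbf{H}^{cont}$ and invoke Theorems~\ref{mainthm1}--\ref{mainthm3} together with the standard correspondence between weak uniqueness for the SDE and well-posedness of the MP. The paper compresses all of this into ``follow from Theorems~\ref{mainthm1}--\ref{mainthm3} by simple re-arrangements,'' so your write-up is a faithful expansion of the intended argument; you correctly identify the main bookkeeping subtlety (the truncation shifts generating $b_{\mathrm{eff}}$, which indeed cancel the $\sigma(x)^\alpha$ factor coming from $\int_{|v|\le 1}v\,\mu^{(\alpha)}(x;dv)$ and produce the clean $\wt b$), and your verifications of \eqref{invert} via the change of variable $x'=x+c(x,u)$ and of \eqref{reg_cond} via the push-forward density are exactly as in Example~\ref{ex2} and the SDE remark. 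One small point you pass over quickly: the proposition's $b_t(x)$ truncates at level $t^{1/\alpha}$ in the \emph{driving} variables $z$ and $u$, whereas a literal substitution into \eqref{m_tb_t} produces truncation in the jump-of-$X$ variable $v$ (i.e.\ at $t^{1/\alpha}/\sigma(x)$, resp.\ on $\{|c(x,u)|>t^{1/\alpha}\}$); these two choices of $b_t$ differ by an $O(t^{1/\alpha-1})$ term, and a short auxiliary estimate in the spirit of the alternative-representation argument of Section~3.4 is needed to see that this discrepancy can be absorbed into $R_t$. This is, however, a gap that the paper's own one-line proof also leaves implicit, so it does not undermine your proposal.
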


The L\'evy-type operator, which formally corresponds to the SDE \eqref{SDE2} is given by
$$\ba
Lf(x)=b(x) f'(x)&+\int_\Re\Big(f(x+\sigma(x)u)-f(x)-\sigma(x)uf'(x)1_{|u|\leq 1}\Big)\mu^{(\alpha; \lambda,\rho)}(du)
\\&+\int_\Re\Big(f(x+c(x,u)-f(x)-c(x,u)u f'(x)1_{|u|\leq 1}\Big)\nu'(du).
\ea
$$
Then the uniqueness of the weak solution to the SDE is close to the well posedness of the MP $(L, C_0^\infty)$; for a (simple) formal argument which connects these two notions  see e.g. \cite[Section 4.3]{Ku18}. Thus the required statements follow from Theorems \ref{mainthm1} -- \ref{mainthm3} by simple re-arrangements.

\subsection{Possible extensions} Let us briefly discuss several possible  modifications and extensions of the main results. First, let us note that the case of state-dependent $\alpha=\alpha(x)$ can be treated similarly, but with a more sophisticated and less transparent estimates. We postpone its study to the companion paper \cite{KKS18}, where the multidimensional locally $\alpha$-stable model is considered in the widest possible generality. It is also visible that the sensitivities (i.e. derivatives) of $p_t(x,y)$ w.r.t. $t$ and external parameters can be treated with the same method; in particular we refer to \cite{KK15}, \cite{Ku18} for representations and bounds for $\prt_tp_t(x,y)$ and to \cite{GKK} for an application of such bounds in the accuracy bounds for approximation of integral functionals.  In order not to overextend the exposition, in the current paper we do not address the sensitivities, leaving their study to a further research.

Next, let us mention that the particular form  of the conditionally $\alpha$-stable approximation \eqref{p_rep} obtained in Theorem \ref{mainthm2} is not the only possible one. Namely, one can change consistently the regressor $\mathfrak{f}_t(x)=\chi_t(x)$ and the {$\alpha$-stable innovation term}, providing the following alternative representation, which may be more convenient e.g for  simulation purposes. Define for a given $t>0$ the family $\ov \chi_s^t(x)$, $s\in[0, t]$, $x\in \Re$ as the solution to the Cauchy problem
$$
{d\over ds}\ov\chi_s^t(x)=B_{t}(\ov \chi_s^t(x)), \quad \ov\chi_0^t(x)=x,
$$
and put
$$
\ov\lambda_t(x)={1\over t}\int_0^t\lambda(\ov\chi_s^t(x))\, ds, \quad \ov\rho_t(x)={1\over t\lambda_t(x)}\int_0^t\lambda( \ov\chi_s^t(x))\rho( \ov \chi_s^t(x))\, ds,
$$
$$
\ov g^{t,x}(w)=g^{(\ov\lambda_t(x), \ov\rho_t(x), 0)}(w).
$$
\begin{prop} Let conditions $\mathbf{H}^{drift}$, $\mathbf{H}^{(\alpha)}$, $\mathbf{H}^{\nu}$(i), and $\mathbf{H}^{cont}$  hold true. Then
\be\label{p_rep1ov}
p_t(x,y)={1\over t^{1/\alpha}}\ov g^{t,x}\left({y-\ov \chi_t^t(x)\over t^{1/\alpha}}\right)+\ov R_t(x,y),
\ee
where $\ov R_t(x,y)$ satisfies \eqref{R_bound_int}. Under the additional condition \eqref{invert}  $\ov R_t(x,y)$ satisfies \eqref{R_bound_unif}, and under the condition $\mathbf{H}^{\nu}$ \textrm{(ii)} the term  $\ov R_t(x,y)$ satisfies \eqref{R_bound_point}. In the latter case, $\chi_t(x)$ in the right hand side of
\eqref{R_bound_point} can be replaced by $\ov \chi_t^t(x)$.
\end{prop}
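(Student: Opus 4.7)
The plan is to derive \eqref{p_rep1ov} from Theorems~\ref{mainthm2}--\ref{mainthm3} by showing that the two candidate densities
\[
\frac{1}{t^{1/\alpha}}\,g^{t,x}\!\left(\frac{y-\chi_t(x)}{t^{1/\alpha}}\right) \quad \text{and} \quad \frac{1}{t^{1/\alpha}}\,\ov g^{t,x}\!\left(\frac{y-\ov\chi_t^t(x)}{t^{1/\alpha}}\right)
\]
differ by a term satisfying the same estimates as $R_t(x,y)$. A preliminary observation is that, since $g^{(\lambda,\rho,\upsilon)}(w)=g^{(\lambda,\rho,0)}(w-\upsilon)$ by \eqref{stable_psi}, the original main term equals $t^{-1/\alpha}g^{(\lambda_t(x),\rho_t(x),0)}((y-\chi_t(x)-t^{1/\alpha}\upsilon_t(x))/t^{1/\alpha})$; the shift $\upsilon_t$ is thus absorbed into an effective regressor $\chi_t(x)+t^{1/\alpha}\upsilon_t(x)$, and it suffices to compare this effective regressor with $\ov\chi_t^t(x)$ and the parameters $(\lambda_t,\rho_t)$ with $(\ov\lambda_t,\ov\rho_t)$.

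The central estimate is the flow identity
\[
\ov\chi_t^t(x)-\chi_t(x) \,=\, t^{1/\alpha}\upsilon_t(x) \,+\, O(t^{1/\alpha+\delta}),
\]
complemented by $|\ov\lambda_t(x)-\lambda_t(x)|+|\ov\rho_t(x)-\rho_t(x)|=O(t^{\delta})$. For the flow comparison I would write
\[
\ov\chi_t^t(x)-\chi_t(x)=\int_0^t\!\big[B_t(\ov\chi_s^t(x))-B_t(\chi_s(x))\big]ds+\int_0^t\!\big[B_t(\chi_s(x))-B_s(\chi_s(x))\big]ds,
\]
and close the first integral by Gronwall's lemma, using that the Lipschitz bound \eqref{Lip_B} is integrable on $[0,t]$ since $\delta<1$. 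For the second integral, the definitions \eqref{m_tb_t} give
\[
b_t(y)-b_s(y)=\int_{s^{1/\alpha}<|u|\leq t^{1/\alpha}}\!u\,\mu(y,du),
\]
whose $\mu^{(\alpha)}$-part equals precisely $t^{1/\alpha}\upsilon(y)W_\alpha(t;s)$ by a direct calculation, while the $\nu$-part contributes an error of order $O(t^{1/\alpha+\delta})$ after $s$-integration, thanks to $\mathbf{H}^\nu$(i) and the definition of $\delta_\beta$. Passing from $b_t-b_s$ to $B_t-B_s$ costs only the mollification error \eqref{b-B}, and integrating in $s\in[0,t]$ along the trajectory $\chi_s(x)$ reconstructs exactly $t^{1/\alpha}\upsilon_t(x)$, up to a remainder of order $t^{1/\alpha+\delta}$. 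The parameter comparison follows by a parallel but simpler Gronwall estimate showing $\sup_{s\leq t}|\chi_s(x)-\ov\chi_s^t(x)|\leq Ct^{1/\alpha}$, combined with the $\zeta$-H\"older continuity of $\lambda,\rho$ from $\mathbf{H}^{(\alpha)}$(i).

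It remains to convert these shift and parameter estimates into a bound on the density difference. For this I would use the smooth dependence of $g^{(\lambda,\rho,0)}$ on $(\lambda,\rho)$ (uniformly on compact sets bounded away from $\lambda=0$, as ensured by $\mathbf{H}^{(\alpha)}$(ii)), together with the translation bound $|g(w)-g(w+h)|\leq|h|\cdot\|g'\|_\infty$ and the standard kernel and derivative bounds for $\alpha$-stable densities. The resulting error has the same structural form as $R_t(x,y)$: integrating in $y$ gives \eqref{R_bound_int}; under \eqref{invert} it matches \eqref{R_bound_unif}; under $\mathbf{H}^\nu$(ii) it matches \eqref{R_bound_point}. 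The final replacement of $\chi_t(x)$ by $\ov\chi_t^t(x)$ inside the kernel bound is harmless because the two points differ by $O(t^{1/\alpha})$ and $G^{(\alpha,\beta,\gamma)}_t(\cdot,y)$ is equivalent, up to multiplicative constants, under such shifts. The main obstacle is the super-critical regime $\alpha<1$, where $W_\alpha(t;s)$ is unbounded near $s=0$; here one has to rely on the probabilistic normalisation \eqref{W_prob} to keep all integrals convergent, and exploit the $\zeta$-H\"older regularity of $\upsilon=2\lambda\rho$ along the flows $\chi_\cdot$ and $\ov\chi_\cdot^t$ in order to justify that $\int_0^t\upsilon(\chi_s)W_\alpha(t;s)\,ds$ is the correct limiting object produced by the flow comparison.
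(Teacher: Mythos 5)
Your route is essentially the one the paper itself takes: write $\ov R_t=R_t+(\text{difference of the two stable-density terms})$, reduce that difference to the flow identity $\ov\chi_t^t(x)-\chi_t(x)=t^{1/\alpha}\upsilon_t(x)+O(t^{1/\alpha+\delta})$ obtained exactly by your drift-splitting plus Gronwall using \eqref{Lip_B}, with $b_t-b_s$ producing the term $t^{1/\alpha}\upsilon\, W_\alpha$ from the $\mu^{(\alpha)}$-part and $O(t^{1/\alpha+\delta})$ from the $\nu$-part and the mollification error \eqref{b-B}, and couple this with the $\zeta$-H\"older comparison of $(\lambda_t,\rho_t,\upsilon_t)$ and $(\ov\lambda_t,\ov\rho_t,\ov\upsilon_t)$ via $\sup_{s\le t}|\ov\chi_s^t(x)-\chi_s(x)|\le Ct^{1/\alpha}$, and finally convert to the three types of bounds using the smooth dependence of stable densities (\eqref{g_bound_lamb}--\eqref{g_bound_x}) and the shift-invariance of $G^{(\alpha,\beta,\gamma)}_t$ under $O(t^{1/\alpha})$ translations. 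One peripheral factual slip: reading off \eqref{W-def}, $W_\alpha(t;s)$ blows up as $s\to 0$ precisely for $\alpha\geq 1$ and stays bounded in the super-critical case $\alpha<1$, not the other way round; this does not damage your argument, which only uses the normalisation \eqref{W_prob} together with H\"older regularity of $\upsilon$ along the flows.
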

\begin{proof}[Sketch of the proof.] It is clear from the definition of the   density $\ov g^{t,x}$ that
$$g^{(\ov\lambda_t(x), \ov\rho_t(x), \ov\upsilon_t(x))}(w)=\ov g^{t,x}(w-\ov \upsilon_t(x)),
$$where we denote 
$$
\ov\upsilon_t(x)=\int_0^t\upsilon(\chi_s^t(x))W_\alpha(t;s)\, ds.
$$
   On the other hand, one can show similarly to \eqref{chi-chi} that
\be\label{chis}
\left|\ov\chi_s^t(x)- \chi_s(x)\right|\leq Ct^{1/\alpha}, \quad s\leq t, \quad \hbox{ and }\quad  \left|\big(\ov \chi_t^t(x)-t^{1/\alpha}\ov\upsilon_t(x)\big)- \chi_t(x)\right|\leq Ct^{1/\alpha+\delta}.
\ee It follows from the H\"older continuity of $\lambda, \rho, \upsilon$ and the first inequality in \eqref{chis} that 
$$
|\ov\lambda_t(x)-\lambda_t(x)|+|\ov\rho_t(x)-\rho_t(x)|+|\ov\upsilon_t(x)-\upsilon_t(x)|\leq Ct^{\zeta/\alpha}\leq Ct^\delta.
$$
Then 
  the required bounds for
$$\ba
\ov R_t(x,y)&=R_t(x,y)
\\& +{1\over t^{1/\alpha}}g^{(\lambda_t(x), \rho_t(x), \upsilon_t(x))}\left({y-\chi_t(x)\over t^{1/\alpha}}\right)-{1\over t^{1/\alpha}}g^{(\lambda_t(x), \ov\rho_t(x), \ov\upsilon_t(x))}\left({y-\ov \chi_t^t(x)+t^{1/\alpha}\ov\upsilon_t(x)\over t^{1/\alpha}}\right)
\ea
$$
follow by respective bounds for $R_t(x,y)$ and the basic properties of  stable densities (e.g. \eqref{g_bound_x}).
\end{proof}
In the above representations, we define the regressor as the solution to the ODE driven by the (mollified) partially compensated drift, and then determine the parameters of the $\alpha$-stable density of the innovation term by averaging of the correspondent space dependent parameters of the model w.r.t. the solution to the ODE  on the time interval $[0,t]$. These principal components can be further simplified by the cost of making the bounds less precise and (possibly) under additional assumptions. First, let us mention briefly that the \emph{true} solution $\chi_t$  to \eqref{chi} can be replaced by its $k$-th iteration $\chi_t^{(k)}$ in the Picard approximation procedure. The situation here is similar to the one studied in \cite[Section 2.2]{Ku18}, thus we omit a detailed discussion and just mention that for such an approximation to be successful one needs
$$
1+\eta+\dots+\eta^k>{1\over \alpha}.
$$
In particular, the  naive choice of the regressor $\mathfrak{f}_t(x)=x+b(x)t$ mentioned in Introduction corresponds to the case $k=1$. That is, for such a choice to be successful it is required that $\alpha>(1+\eta)^{-1}$, which in particular excludes small values $\alpha\leq 1/2$.

Next, in the case  of \emph{bounded}  $\wt b$,  the innovation term can be further simplified. Namely, in this case  it is easy to verify that
$$
|\chi_t(x)-x|\leq C\Big(t+t^{1/\alpha}\Big)
$$
if $\alpha\not=1$ (in the exceptional case $\alpha=1$ an additional logarithmic term should appear).
 Since $\lambda$ is $\zeta$-H\"older continuous, this yields
$$
|\lambda_t(x)-\lambda(x)|\leq C\Big(t^\zeta+t^{\delta_{\zeta}}\Big),
$$
and the similar bounds hold true for $\rho_t, \upsilon_t, \ov\lambda_t, \ov\rho_t, \ov\upsilon_t$. Then essentially the same argument as in the proof of \eqref{R_tilde_bound} (see Appendix \ref{sA41}) makes it possible to deduce representations
\be\label{p_repbound}
p_t(x,y)={1\over t^{1/\alpha}}g^{x}\left({y-\chi_t(x)\over t^{1/\alpha}}\right)+R_t^{frozen}(x,y)={1\over t^{1/\alpha}}\ov g^{x}\left({y-\ov \chi_t(x)\over t^{1/\alpha}}\right)+\ov R_t^{frozen}(x,y)
\ee
with the $\alpha$-stable densities
$$
g^{x}(w)=g^{(\lambda(x), \rho(x), \upsilon(x))}(w), \quad \ov g^{x}(w)=g^{(\ov\lambda(x), \ov\rho(x), 0)}(w),
$$
which just correspond to the values of the parameters `frozen' at the initial point $x$. The error terms $R_t^{frozen}(x,y), \ov R_t^{frozen}(x,y)$ under the corresponding conditions satisfy analogues of \eqref{R_bound_int}, \eqref{R_bound_unif}, and \eqref{R_bound_point} with $\delta$ changed to $\delta\wedge \zeta$. Note that $\delta<\zeta/\alpha$; that is, for $\alpha\geq 1$ the bounds actually remain unchanged.

\subsection{Some related results}\label{over} We do not give a wide overview of the related results in this extensively developing domain, referring an interested reader to \cite{KK15}, \cite{Kuh17-2}, and a survey paper \cite{KKK} for such reviews. Instead, we focus on a discussion of references directly related to the particular issues treated in the current paper.

\emph{1. Various types of estimates.}
We have already mentioned that the most attention in the available literature  is devoted to kernel-type estimates, see  detailed surveys in  \cite{KK15}, \cite{KKK}. The separate study of integral-in-$y$ and uniform-in-$(x,y)$ estimates is apparently new; note however the  forthcoming book \cite{Ko18}, Sections 5.4, 5.5, where a systematic treatment is given, which leads to a pair of dual $L_1$-$C_\infty$ estimates. These estimates are of the same spirit with ours; however, one should note note that the additive-in-space  bounds  (see \cite[(5.69)]{Ko18}) adopted there  as the main assumption,  in certain settings,  may become too restrictive. Namely, it will become clear from the proof of uniform-in-$(x,y)$ estimate in Section \ref{s5} below that the main property required for such estimate to hold is the integral-in-$x$ bound \eqref{full} which is actually a `dual' analogue of the `direct' integral-in-$y$ estimate. Example \eqref{ex1} shows that, for singular L\'evy kernels, the `direct'  and the `dual' estimates should be treated separately. On the other hand, the additive structure of
\cite[(5.69)]{Ko18} makes the integral-in-$x$ and the integral-in-$y$ estimates synonymic, which does not allow one to
approach singular L\'evy-type models. In the recent preprint \cite{KulStoRyz18}, another (mixed $L_1$-$C_\infty$) type of estimates is proposed to treat the singular model L\'evy-type based on the  multidimensional SDEs with cylindrical $\alpha$-stable noise and non-trivial rotation.

Let us mention that the $L_1$-approach, based on integral-in-$y$ estimates only, has a deep connection, at least on the level of the principle ideas, with the approach to the well-posedness of the martingale problem for  integro-differential operators which dates back to  \cite{G68} and \cite{Ko84a}, \cite{Ko84b}.

\emph{2. Non-symmetry of the L\'evy noise.} The heat kernel estimates for L\'evy and L\'evy-type processes were mainly studied for symmetric noises; the non-symmetric setting becomes the subject of a study just  in the few last years. The most advanced study in this direction available to the author is given by the recent preprint \cite{S18}; we refer there for an overview of few other recent results in the same direction. In the model from \cite{S18}, the external drift (our $b$) is not included, as well as the nuisance kernel $\nu$. On the other hand, the class of the kernels treated therein is substantially wider than our class of $\alpha$-stable principal parts.

\emph{3. Non-boundedness of the drift coefficient.} It is traditional for the literature exploiting the analytical parametrix-type methods that the coefficients are assumed to be globally bounded. On the other hand, it was specially pointed to the author by H.~Masuda that, for various applications esp. in statistics it is highly desirable for the theory to cover mean reverting models of the Ornstein-Uhlenbeck type. This explains the special attention paid in the paper to the case of unbounded $b$. The only reference known to the author, where such non-boundedness is allowed, is an  apparently yet  not published preprint \cite{H15}.

\section{Preliminaries to the proofs: the parametrix method and an integral representation for $p_t(x,y)$}\label{s4}

In this section we make preparation for the proofs of the main results. We introduce an integral equation whose unique solution $p_t(x,y)$  later on will be proved to be the transition probability density  of the target process $X$.  Such a construction is motivated by the \emph{parametrix method}, which is a classical tool for constructing fundamental solutions  to parabolic Cauchy problems.  We present here only the rigorous step-by-step exposition without additional discussion of the heuristics behind the method; for such a discussion e.g. \cite{KK15}, \cite{Ku18}.

\subsection{The parametrix method: an outline, and the choice of the zero order approximation}\label{s41}
 In this section, we introduce the main objects and explain the  method. We will repeatedly use the following notation for space- and time-space convolutions of functions:
$$
(f\ast g)_t(x,y):=\int_{\Re^d}f_{t}(x,z)g_{t}(z,y)\, dz,\quad
(f\star g)_t(x,y):=\int_0^t\int_{\Re^d}f_{t-s}(x,z)g_{s}(z,y)\, dzds.
$$

We will fix a function $p_t^0(x,y)$,    a `zero order approximation' to the unknown $p_t(x,y)$, which
 will belong to $C^1(0, \infty)$ in $t$ and to $C^2_\infty$ in $x$. In particular, the following `differential error term' will be well defined point-wisely:
\be\label{Phi}
\Phi_t(x,y):=-\Big(\prt_t-L_x\Big)p_t^0(x,y),\quad x,y\in\Re,
\ee
here and below the lower index of an operator indicates the variable at which the operator is applied. Under the proper choice of $p_t^0(x,y)$, the  kernel $\Phi_t(x,y)$ will satisfy
\be\label{Phi_bound}
\sup_{x\in \Re}\int_{\Re}|\Phi_t(x,y)|\, dy\leq Ct^{-1+\delta}.
\ee
The cornerstone of the construction is given by the 2nd type Fredholm integral equation
\be\label{eq_int}
p_t(x,y)=p^0_t(x,y)+(p\star \Phi)_t(x,y),
\ee
which we interpret in the following way. With the {time horizon} $T>0$ being fixed, consider the Banach space of the kernels $\Upsilon_t(x,y)$ on
$[0,T]\times\Re\times \Re$ with the norm
$$
 \|\Upsilon\|_{\infty,1,1}=\sup_{x\in \Re}\int_0^T\int_\Re|\Upsilon_t(x,y)|\, dydt.
$$
Consider also the Banach space $L_{\infty,\infty,1}^T$  of  functions $f_t(x,y)$ with the norm
$$
\|f\|_{\infty,\infty,1}=\sup_{x\in \Re,t\in [0,T]}\int_\Re|f_t(x,y)|\, dy.
$$
 Any kernel $\Upsilon\in L_{\infty,1,1}^T$ generates a bounded linear operator in $L_{\infty,\infty,1}^T$
$$
(A^\Upsilon f)_t(x,y)=(f\star \Upsilon)_t(x,y),
$$
 with the operator norm of $A^\Upsilon$ bounded by $\|\Upsilon\|_{\infty,1,1}$. By \eqref{Phi_bound}, the kernel $\Phi_t(x,y)$ belongs to $L_{\infty,1,1}^T.$  Then we naturally  interpret \eqref{eq_int} as an  equation
\be\label{eq_op}
p=p^0+A^\Phi p
\ee
in the Banach space $L_{\infty,\infty,1}^T.$ It is an easy calculation that \eqref{Phi_bound} yields
\be\label{Phi_bound_k}
\sup_x \int_{\Re} |\Phi^{\star k}_t(x,y)|\, dy\leq t^{-1+k\delta}{C^k\Gamma(\delta)^k\over \Gamma(k\delta)},\quad \Phi^{\star k}=\underbrace{\Phi\star\dots\star\Phi}_{k}, \quad k\geq 1,
\ee
see Section \ref{s44} below. Then
$$
\sum_{k=1}^\infty\|(A^\Phi)^k\|=\sum_{k=1}^\infty\|A^{\Phi^{\star k}}\|\leq \sum_{k=1}^\infty\|\Phi^{\star k}\|_{\infty,1,1}\leq \sum_{k=1}^\infty
T^{k\delta}{C^k\Gamma(\delta)^k\over \Gamma((k+1)\delta)}<\infty,
$$
and therefore the solution to the  equation \eqref{eq_op} in $L^T_{\infty, \infty,1}$ is uniquely specified by the classical von Neumann series representation:
\be\label{sol_1}
p_t(x,y)=p_t^0(x,y)+\sum_{k\geq 1}(p^0\star\Phi^{\star k})_t(x,y)=p_t^0(x,y)+(p^0\star\Psi)_t(x,y),
\ee
\be\label{Psi}
\Psi_t(x,y)=\sum_{k\geq 1}\Phi^{\star k}_t(x,y),
\ee
with the series convergent in $L_{\infty,\infty,1}^T$ and $L_{\infty,1,1}^T,$ respectively.

Now, let us proceed with specification of the  zero-order approximation $p_t^0(x,y)$ for our particular model.
We define the function $\kappa_s(y), s\geq 0,  y\in \Re$ as the solution to the Cauchy problem
$$
{d\over ds}\kappa_s(y)=-B_{s}(\kappa_s(y)),\quad s\geq 0, \quad \kappa_0(y)=y, \quad y\in \Re.
$$
Define for $z\in \Re$, $t>0$
$$
\Psi_\alpha(t,z;\xi)=\int_0^t \int_{\Re}\Big(e^{iu\xi}-1-iu\xi 1_{|u|\leq s^{1/\alpha}}\Big)\mu^{(\alpha)}(\kappa_s(z);du)\, ds,
$$
which   has representation in the form
\be\label{Q1}
\Psi_\alpha(t,z;\xi)=\Psi^{(\wt\lambda_t(z),\wt \rho_t(z), \wt \upsilon_t(z))}_\alpha(t^{1/\alpha}\xi)
\ee
with
$$
\wt \lambda_t(z)={1\over t}\int_0^t\lambda(\kappa_s(z))\, ds, \quad \wt \rho_t(z)={1\over t\wt\lambda_t(z)}\int_0^t\lambda(\kappa_s(z))\rho(\kappa_s(z))\, ds,
$$
$$
\wt \upsilon_t(z)=\int_0^t\upsilon(\kappa_s(z)) W_\alpha(t;s)\, ds,
$$
recall that $W_\alpha(t;s)$ is defined in \eqref{W-def}. We will prove \eqref{Q1} in Appendix \ref{sA41}; this identity actually means that
$\Psi_\alpha(t,z;\xi)$ is a characteristic exponent of an  $\alpha$-stable law.  We denote by
$
h^{t, z}(w)
$
the corresponding $\alpha$-stable distribution density
$$
h^{t, z}(w)={1\over 2\pi}\int_\Re e^{-iw\xi+\Psi_\alpha(t,z;\xi)}\, d\xi,
$$
and define
\be\label{p^0}
p_t^0(x,y)=h^{t, y}(\kappa_t(y)-x).
\ee
Denote
\be\label{wtg}
\wt g^{t,z}(w)= g^{(\wt\lambda_t(z), \wt\rho_t(z), \wt\upsilon_t(z))}(w),
\ee
then by \eqref{Q1} the formula can be written as
\be\label{p^01}
p_t^0(x,y)={1\over t^{1/\alpha}}\wt g^{t,y}\left({\kappa_t(y)-x\over t^{1/\alpha}}\right).
\ee

\subsection{Kernel  $\Phi_t(x,y)$: decomposition and estimates}\label{s42}
Define an auxiliary operator
$$
\Lba^{(\alpha),z,t}f(x)=\int_{\Re}\Big(f(x+u)-f(x)-uf'(x)1_{|u|\leq t^{1/\alpha}}\Big)\mu^{(\alpha)}(\kappa_{t}(z);du), \quad f\in C_\infty^2.
$$
The following identity is crucial for the entire construction.
\be\label{arrow_id}
(\prt_t-\Lba_{x}^{(\alpha),z,t}){h}^{t,z}\left(w-x\right)=0, \quad t>0,\quad  x,w,z\in \Re.
\ee
This identity can be  verified using the formula \eqref{p^0} and a  standard Fouier analysis-based argument; see Appendix \ref{sA41}.   We have
$$\ba
\prt_tp_t^0(x,y)=\prt_t h^{t, y}(\kappa_t(y)-x)&=\prt_t h^{t, y}(w-x)\Big|_{w=\kappa_t(y)}+\prt_w h^{t, y}(w-x)\Big|_{w=\kappa_t(y)}\prt_t\kappa_t(y)
\\&=\prt_t h^{t, y}(w-x)\Big|_{w=\kappa_t(y)}-\prt_x h^{t, y}(w-x)\Big|_{w=\kappa_t(y)}\prt_t\kappa_t(y).
\ea $$
Thus, combining \eqref{arrow_id} and the fact that $\prt_t(\kappa_t(y))=-B_t(\kappa_t(y))$, we get
\be\label{p^0_dt}\ba
\prt_tp_t^0(x,y)&=\Lba_{x}^{(\alpha),y,t} h^{t, y}(\kappa_t(y)-x)+\prt_x h^{t, y}(\kappa_t(y)-x)B_t(\kappa_t(y))
\\&=\Lba_{x}^{(\alpha),y,t}p_t^0(x,y)+B_t(\kappa_t(y))\prt_xp_t^0(x,y).
\ea
\ee
On the other hand, for the operator $L$ defined by \eqref{L_gen1} we have the following decomposition:
\be\label{L_decomp}
L =b\prt_x+L^{(\alpha),x,1}+L^{\nu,x,1} =b_t\prt_x+L^{(\alpha),x,t}+L^{\nu,x,t},
\ee
where
$$
L^{(\alpha),z,t}f(x)=\int_{\Re}\Big(f(x+u)-f(x)-uf'(x)1_{|u|\leq t^{1/\alpha}}\Big)\mu^{(\alpha)}(z;du), \quad f\in C_\infty^2,
$$
\be\label{Lnu}
L^{\nu, z,t}f(x)=\int_{\Re}\Big(f(x+u)-f(x)-uf'(x)1_{|u|\leq t^{1/\alpha}}\Big)\nu(z;du), \quad f\in C_\infty^2.
\ee
Now we can represent $\Phi$ in the following form:
\be\label{Phi_decomp}\ba
\Phi_t(x,y)&=(L_x-\prt_t)p_t^0(x,y)
\\&=\Big(b_t(x)-B_t(\kappa_t(y))\Big)\prt_x p_t^0(x,y)+ \Big(L_{x}^{(\alpha),x,t}-\Lba_{x}^{(\alpha),y,t}\Big)p_t^0(x,y)
+L_{x}^{\nu,x,t}p_t^0(x,y)
\\&=:\Phi_t^{drift}(x,y)+\Phi_t^{(\alpha)}(x,y)+\Phi_t^\nu(x,y).
\ea
\ee

 In what follows, we estimate separately the components of $\Phi$ in the decomposition \eqref{Phi_decomp} and  deduce  an integral estimate for the entire $\Phi$, which holds true under $\mathbf{H}^{\nu}$ (i).  We will repeatedly use representation \eqref{p^01} and the following observation. The functions $\wt\lambda_t(z), \wt\rho_t(z), \wt\upsilon_t(z)$ are bounded  since they are obtained by averaging of bounded functions  w.r.t. probability measures. In addition, $\wt\lambda_t(z)$ is uniformly separated from zero. That is, for the function \eqref{wtg} with $z=y$ the bounds \eqref{g_bound0}, \eqref{g_bound_x} -- \eqref{g_bound_Lasym} can be used.

\emph{{Step 1: Estimate for $\Phi^{drift}$.}} By \eqref{b-B},\eqref{Lip_B} we have $$\ba
|b_t(x)-B_t(\kappa_t(y))|\leq |b_t(x)-B_t(x)|&+|B_t(x)-B_t(\kappa_t(y))|
\leq C\left(1+\left|{\kappa_t(y)-x\over t^{1/\alpha}}\right|\right)t^{-1+1/\alpha+\delta}.
\ea
$$
We have
$$
\prt_x p_t^0(x,y)=-{1\over t^{2/\alpha}}\Big(\widetilde{g}^{t,y}\Big)'\left({\kappa_t(y)-x\over t^{1/\alpha}}\right).
$$
Applying \eqref{g_bound_x}, and then  \eqref{G_comp}, \eqref{G_pol}, we easily get
\be\label{Phi_drift}\ba
|\Phi^{drift}_t(x,y)|&\leq Ct^{-1-1/\alpha+\delta}G^{(\alpha)}\left({\kappa_t(y)-x\over t^{1/\alpha}}\right)
=Ct^{-1+\delta}G^{(\alpha, \alpha,\alpha)}_t(x,\kappa_t(y)).
\ea
\ee

\emph{{Step 2: Estimate for $\Phi^{(\alpha)}$.}} Denote for $f\in C_\infty^2$
\be\label{Lsym}
L^{(\alpha),sym}f(x)=\int_{\Re}\Big(f(x+u)-f(x)-uf'(x)1_{|u|\leq 1}\Big){du\over |u|^{\alpha+1}},
\ee
\be\label{Lasym}
L^{(\alpha),asym}f(x)=\int_{\Re}\Big(f(x+u)-f(x)-uf'(x)1_{|u|\leq 1}\Big)\, \sgn(u){du\over |u|^{\alpha+1}}.
\ee
Then
$$
L^{(\alpha),z,t}f\left({x\over t^{1/\alpha}}\right)={\lambda(z)\over t}(L^{(\alpha),sym}f)\left({x\over t^{1/\alpha}}\right)+{\lambda(z)\rho(z)\over t}(L^{(\alpha),asym}f)\left({x\over t^{1/\alpha}}\right),
$$
$$
\widetilde{L}^{(\alpha),z,t}f\left({x\over t^{1/\alpha}}\right)={\lambda(\kappa_t(z))\over t}(L^{(\alpha),sym}f)\left({x\over t^{1/\alpha}}\right)+{\lambda(\kappa_t(z))\rho(\kappa_t(z))\over t}(L^{(\alpha),asym}f)\left({x\over t^{1/\alpha}}\right),
$$
and thus
\be\label{Phi_a}\ba
\Phi^{(\alpha)}_t(x,y)={\lambda(x)-\lambda(\kappa_t(y))\over t}&(L^{(\alpha),sym}\wt g^{t,y})\left({\kappa_t(y)-x\over t^{1/\alpha}}\right)
\\&+{\lambda(x)\rho(x)-\lambda(\kappa_t(y))\rho(\kappa_t(y))\over t}(L^{(\alpha),asym}\wt g^{t,y})\left({\kappa_t(y)-x\over t^{1/\alpha}}\right).
\ea
\ee
On the other hand, we have by \eqref{g_bound_Lsym}, \eqref{g_bound_Lasym}
$$
|L^{(\alpha),sym}\wt g^{t,z}(x)|+|L^{(\alpha),asym}\wt g^{t,z}(x)|\leq CG^{(\alpha)}(x).
$$
 Since  the functions  $\lambda(x)$ and $\rho(x)$ are bounded and $\zeta$-Holder continuous, this  gives
$$\ba
\Phi^{(\alpha)}_t(x,y)&\leq C\big(|x-\kappa_t(y)|^\zeta\wedge 1\big)t^{-1}G^{(\alpha, \alpha,\alpha)}_t(x, \kappa_t(y))=Ct^{-1+{\zeta/\alpha}}G^{(\alpha, \alpha-\zeta,\alpha)}_t(x,\kappa_t(y))
\\&=Ct^{-1+\delta_\zeta}G^{(\alpha, \alpha-\zeta,\alpha)}_t(x,\kappa_t(y)).
\ea
$$

\emph{Step 3: Estimate for $\Phi^{\nu}$.} We decompose
$$\ba
\Phi^{\nu}_t(x,y)=&\int_{\Re}\Big(p_t^0(x+u,y)-p_t^0(x,y)-u\prt_xp_t^0(x+u,y)1_{|u|\leq t^{1/\alpha}}\Big)\nu(x;du)
\\=&\int_{|u|\leq t^{1/\alpha}}\Big(p_t^0(x+u,y)-p_t^0(x,y)-u\prt_x p_t^0(x+u,y)\Big)\nu(x;du)
\\&+\int_{|u|> t^{1/\alpha}}p_t^0(x+u,y)\nu(x;du)
-\int_{|u|> t^{1/\alpha}}p_t^0(x,y)\nu(x;du)
\\&=:\Phi^{\nu,small}_t(x,y)+\Phi^{\nu, large,+}_t(x,y)+\Phi^{\nu, large,-}_t(x,y).
\ea
$$
  We have by \eqref{g_bound_x}
$$
|\prt^2_{xx} p_t^0(x,y)|\leq Ct^{-3/\alpha}G^{(\alpha+2)}\left({\kappa_t(y)-x\over t^{1/\alpha}}\right),
$$
which gives
$$\ba
|\Phi^{\nu,small}_t(x,y)| &\leq Ct^{-3/\alpha}\sup_{|v|\leq t^{1/\alpha}}G^{(\alpha+2)}\left({\kappa_t(y)-x-v\over t^{1/\alpha}}\right) \int_{|u|\leq t^{1/\alpha}}u^2|\nu|(x;du)
\\&\leq Ct^{-3/\alpha}G^{(\alpha+2)}\left({\kappa_t(y)-x-v\over t^{1/\alpha}}\right)(t^{1/\alpha})^{2-\beta},
\ea
$$
in the last inequality we used \eqref{vague}, condition \eqref{cond_ups_small}, and \eqref{est_2}. Next, we have by \eqref{g_bound0}
$$
p_t^0(x,y)\leq Ct^{-1/\alpha}G^{(\alpha)}\left({\kappa_t(y)-x\over t^{1/\alpha}}\right),
$$
thus by \eqref{cond_ups_small}
$$\ba
|\Phi^{\nu, large,-}_t(x,y)|\leq
p_t^0(x,y)\int_{|u|> t^{1/\alpha}}|\nu|(x;du)\leq Ct^{-1/\alpha}G^{(\alpha)}\left({\kappa_t(y)-x\over t^{1/\alpha}}\right)\Big(t^{1/\alpha}\Big)^{-\beta}.
\ea
$$ Then by \eqref{G_comp}
$$\ba
|\Phi^{\nu,small}_t(x,y)|+|\Phi^{\nu, large,-}_t(x,y)|&\leq Ct^{-3/\alpha}G^{(\alpha+2)}\left({\kappa_t(y)-x-v\over t^{1/\alpha}}\right)(t^{1/\alpha})^{2-\beta}
\\&+Ct^{-1/\alpha}G^{(\alpha)}\left({\kappa_t(y)-x\over t^{1/\alpha}}\right)\Big(t^{1/\alpha}\Big)^{-\beta}
\\&\hspace*{-4cm}\leq Ct^{-1/\alpha-\beta/\alpha}G^{(\alpha)}\left({\kappa_t(y)-x\over t^{1/\alpha}}\right)=Ct^{-\beta/\alpha}G^{(\alpha, \alpha,\alpha)}_t(x,\kappa_t(y))=Ct^{-1+\delta_\beta}G^{(\alpha, \alpha,\alpha)}_t(x,\kappa_t(y)).
\ea
$$
That is, the first and the third parts in the above decomposition of $\Phi^\nu$  satisfy a bound similar to the bound \eqref{Phi_drift} for $\Phi^{drift}$. For the second part,  we simply
write
$$
|\Phi^{\nu,large,+}_t(x,y)|\leq t^{-1+\delta_\beta}Q_t(x,y),
$$
where
\be\label{Q}
Q_t(x,y)=t^{\beta/\alpha}|\Phi^{\nu,large,+}_t(x,y)|=t^{1-\delta_\beta}|\Phi^{\nu,large,+}_t(x,y)|
\ee
is just a notation. This gives
$$
|\Phi^{\nu}_t(x,y)|\leq Ct^{-1+\delta_\beta}G^{(\alpha, \alpha,\alpha)}_t(x,\kappa_t(y))+Ct^{-1+\delta_\beta} Q_t(x,y).
$$

\emph{Summary: Proof of \eqref{Phi_bound}.}
The above calculation gives
\be\label{Phi_bound_final}
|\Phi_t(x,y)|\leq Ct^{-1+\delta}G^{(\alpha, \alpha,\alpha)}_t(x,\kappa_t(y))+C^{-1+\delta_\zeta}G^{(\alpha, \alpha-\zeta,\alpha)}_t(x,\kappa_t(y))+
Ct^{-1+\delta_\beta} Q_t(x,y),
\ee
and thus
\be\label{Phi_bound_vague}
|\Phi_t(x,y)|\leq  Ct^{-1+\delta}H_t(x,y), \quad H_t(x,y)=G^{(\alpha, \alpha-\zeta,\alpha)}_t(x,\kappa_t(y))+Q_t(x,y).
\ee
We have for any $\alpha, \beta, \gamma>0$
\be\label{G_kappa_bound}
\sup_x\int_{\Re}G^{(\alpha, \beta,\gamma)}_t(x,\kappa_t(y))\, dy\leq C,
\ee
see Appendix \ref{sA4}. Since
 $$
 \wt g^{t,y}(w)\leq C G^{(\alpha)}(w), \quad w\in \Re
 $$ by \eqref{g_bound0}, we have then
 \be\label{p_0_kernel}
 p_t^0(x,y)\leq C  G^{(\alpha, \alpha,\alpha)}_t(x+u,\kappa_t(y)).
\ee
Applying \eqref{G_kappa_bound} with $\alpha=\beta=\gamma$, we get by \eqref{cond_ups_small}
 \be\label{Q_bound}
 \ba
\sup_x\int_{\Re} Q_t(x,y)\, dy&\leq
 Ct^{-\beta/\alpha}\int_{|u|> t^{1/\alpha}}\left(\int_{\Re} G^{(\alpha, \alpha,\alpha)}_t(x+u,\kappa_t(y))\,dy\right)|\nu|(x;du)
 \\&\leq
 Ct^{\beta/\alpha}\int_{|u|> t^{1/\alpha}}\,|\nu|(x;du)\leq C.
 \ea
\ee
Applying once again \eqref{G_kappa_bound} with $\gamma=\alpha, \beta=\alpha-\zeta$, we get
\be\label{H_bound}
\sup_x\int_{\Re}H_t(x,y)\, dy\leq C,
\ee
which combined with \eqref{Phi_bound_vague} completes the proof of \eqref{Phi_bound}.

\begin{rem}\label{rem_H_tail} Using \eqref{cond_ups_large}, we can also get
\be\label{H_tails}
\quad \sup_{x\in \Re, t\in (0, T]}\int_{\{y:|y-x|>R\}}H_t(x,y)\, dy\to 0, \quad R\to \infty.
\ee
The proof is completely analogous  and is omitted.
\end{rem}

\subsection{Solution to \eqref{eq_int}: specification and further re-arrangement}\label{s44}

For any $k>1$ we have
$$
\Phi^{\star k}_t(x,y)=\int_{0<s_1<\dots<s_{k-1}<t}\Phi_{s_1-s_0,\dots, s_k-s_{k-1}}(x,y)\, ds_1\dots ds_{k-1},
$$
where we denote $s_0=0, s_k=t$,
$$
\Phi_{\tau_1,\dots, \tau_k}(x,y)=\Big(\Phi_{\tau_1}\ast\dots \ast \Phi_{\tau_k}\Big)(x,y)=\int_{\Re^{k-1}}\Phi_{\tau_1}(x,w_1)\dots \Phi_{\tau_k}(w_{k-1},y)\, dw_1 \dots dw_{k-1}.
$$
By \eqref{Phi_bound_vague},
$$\ba
\int_{\Re}&|\Phi_{\tau_1,\dots, \tau_k}(x,y)|\, dy\leq \int_{\Re^{k-1}}\int_\Re|\Phi_{\tau_1}(x,w_1)\dots \Phi_{\tau_k}(w_{k-1},y)|\, dw_1 \dots dw_{k-1}dy
\\&\leq C\tau_k^{-1+\delta}\int_{\Re^{k-1}}|\Phi_{\tau_1}(x,w_1)\dots \Phi_{\tau_{k-1}}(w_{k-2},w_{k-1})|\, dw_1 \dots dw_{k-1}\leq \dots\leq C^k\prod_{j=1}^k\tau_j^{-1+\delta}.
\ea
$$
Thus
$$\ba
\sup_x \int_{\Re} &|\Phi^{\star k}_t(x,y)|\, dy\leq C^k
\int_{0<s_1<\dots<s_{k-1}<t}\prod_{j=1}^k(s_j-s_{j-1})^{-1+\delta}\, ds_1\dots ds_{k-1}
\\&= C^kt^{-1+k\delta}
\int_{0<\upsilon_1<\dots<\upsilon_{k-1}<1}\prod_{j=1}^k(\upsilon_j-\upsilon_{j-1})^{-1+\delta}\, d\upsilon_1\dots d\upsilon_{k-1}=t^{-1+k\delta}{C^k\Gamma(\delta)^k\over \Gamma(k\delta)},
\ea
$$
which is just \eqref{Phi_bound_k}. That is, the solution $p_t(x,y)$ to the integral equation \eqref{eq_int} is uniquely defined by \eqref{sol_1}.

Note that the resolvent kernel $\Psi_t(x,y)$ for the integral equation \eqref{eq_int} inherits from $\Phi_t(x,y)$  the integral bounds and the tail behavior. Namely,  we have
\be\label{H_bound_Psi}
\sup_x \int_{\Re} |\Psi_t(x,y)|\, dy\leq \sum_{k=1}^\infty t^{-1+k\delta}{C^k\Gamma(\delta)^k\over \Gamma(k\delta)}\leq Ct^{-1+\delta}.
\ee
Next, by \eqref{H_tails} we have
$$
\sup_{x\in \Re, t\in (0, T]}t^{1-\delta}\int_{\{y:|y-x|>R\}}|\Phi_t(x,y)|\, dy\to 0, \quad R\to \infty.
$$
Then it is easy to show by induction that, for any $k$,
$$
 \sup_{x\in \Re, t\in (0, T]}t^{1-\delta}\int_{\{y:|y-x|>R\}}|\Phi_t^{\star k}(x,y)|\, dy\to 0, \quad R\to \infty.
$$
These bounds combined with \eqref{Phi_bound_k} yield the similar tail behavior of the kernel $\Psi_t(x,y)$:
\be\label{Psi_tail}
\sup_{x\in \Re, t\in (0, T]}t^{1-\delta}\int_{\{y:|y-x|>R\}}|\Psi_t(x,y)|\, dy\to 0, \quad R\to \infty.
\ee

The solution to \eqref{eq_int} can be written as
\be\label{sol}
p_t(x,y)=p_t^0(x,y)+r_t(x,y), \quad r_t(x,y)=(p^0\star\Psi)_t(x,y),
\ee
and by \eqref{H_bound_Psi},  \eqref{p_0_kernel}, and \eqref{G_kappa_bound} we have
\be\label{r_bound}
\ba
\sup_x\int_\Re |r_t(x,y)|\, dy&\leq \int_0^t\int_\Re\int_\Re p_{t-s}^0(x,z)|\Psi_s(z,y)|\, dz dy ds
\\&\leq C \int_0^t\left(\int_\Re p_{t-s}^0(x,z)\, dz \right) s^{-1+\delta} ds\leq Ct^\delta.
\ea
\ee
Note that representation \eqref{sol} differs from the one claimed in Theorem \ref{mainthm2}, in particular, the zero order term $p_t^0(x,y)$ in \eqref{sol} is not equal to the principal term
$$
p_t^{main}(x,y)={1\over t^{1/\alpha}}g^{t,x}\left({y-\mathfrak{f}_t(x)\over t^{1/\alpha}}\right)
$$
in \eqref{p_rep}. The difference between these two terms admits the following bound; the proof is postponed to Appendix \ref{sA41}:
\be\label{R_tilde_bound}
|p_t^{main}(x,y)- p_t^0(x,y)|\leq Ct^{\delta_\zeta}G_t^{(\alpha,\alpha-\zeta,\alpha)}(\chi_t(x), y)+Ct^\delta G_t^{(\alpha, \alpha, \alpha)}(\chi_t(x), y).
\ee
We have  \be\label{bint}
\sup_x\int_{\Re}G_t^{(\alpha,\beta,\gamma)}(x,y)\, dy\leq C,
\ee
see Appendix \ref{sA4}. That is, by \eqref{R_tilde_bound}
\be\label{R_tilde_bound_int}
\int_{\Re}|p_t^{main}(x,y)- p_t^0(x,y)|\, dy\leq C t^\delta.
\ee
Now it is easy to prove the following.

\begin{lem}\label{l_delta} For any $f\in C_\infty$,
$$
  \sup_{x}\left|\int_{\Re}p_t^{main}(x,y)f(y)\, dy-f(x)\right|\to 0,\quad \sup_{x}\left|\int_{\Re}p_t^0(x,y)f(y)\, dy-f(x)\right|\to 0, \quad t\to 0,
  $$
 \end{lem}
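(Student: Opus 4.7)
The plan is to deduce the claim for $p_t^0$ from the one for $p_t^{main}$ via the $L^1$-bound \eqref{R_tilde_bound_int}, and to prove the assertion for $p_t^{main}$ by a change of variables, a tail cutoff, and a careful analysis of the flow $\chi_t$.

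For the reduction, the triangle inequality combined with \eqref{R_tilde_bound_int} gives
$$
\sup_x\Bigl|\int_\Re \bigl(p_t^0(x,y)-p_t^{main}(x,y)\bigr)f(y)\, dy\Bigr|\leq \|f\|_\infty\sup_x\int_\Re|p_t^0(x,y)-p_t^{main}(x,y)|\, dy\leq C\|f\|_\infty t^\delta\to 0,
$$
so it suffices to prove the claim for $p_t^{main}$. Substituting $w=(y-\chi_t(x))/t^{1/\alpha}$ and using $\int g^{t,x}(w)\, dw=1$ yields
$$
\int_\Re p_t^{main}(x,y)f(y)\, dy-f(x)=\int_\Re g^{t,x}(w)\bigl[f(\chi_t(x)+t^{1/\alpha}w)-f(x)\bigr]\, dw.
$$
Fix $\varepsilon>0$ and split the integral at $|w|\leq M$ and $|w|>M$. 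Using the uniform-in-$(t,x)$ bound $g^{t,x}(w)\leq CG^{(\alpha)}(w)$ (valid because $\lambda_t(x)\geq \lambda_{\min}$ while $\rho_t,\upsilon_t$ are bounded), the tail part is at most $C\|f\|_\infty M^{-\alpha}$, which is less than $\varepsilon/2$ once $M$ is chosen large, uniformly in $x$ and $t$. The bulk part is controlled by $\sup_{|w|\leq M}|f(\chi_t(x)+t^{1/\alpha}w)-f(\chi_t(x))|+|f(\chi_t(x))-f(x)|$, and the first summand is $o(1)$ uniformly in $x$ by uniform continuity of $f\in C_\infty$ (since $t^{1/\alpha}M\to 0$).

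The main obstacle is $\sup_x|f(\chi_t(x))-f(x)|$, because $b$ is only assumed continuous (possibly unbounded), so $\sup_x|\chi_t(x)-x|$ need not be finite. To handle this, choose $R$ with $|f(y)|<\varepsilon$ for $|y|>R$ and a threshold $R^*$, splitting $x$ into $|x|\leq R^*$ and $|x|>R^*$. On the compact range $|x|\leq R^*$, the bound $|\chi_t(x)-x|\leq \int_0^t|B_s(\chi_s(x))|\, ds$ combined with \eqref{b-B}, \eqref{Lip_B}, a Gronwall estimate and continuity of $b$ gives $\sup_{|x|\leq R^*}|\chi_t(x)-x|\to 0$; uniform continuity of $f$ then disposes of this case. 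For $|x|>R^*$, the integrability of $\mathrm{Lip}(B_s)\leq Cs^{-1+\delta}$ shows that $\chi_t$ is bi-Lipschitz with both Lipschitz constants tending to $1$ as $t\to 0$, hence for $R^*$ large enough $|\chi_t(x)|>R$ as well, and so $|f(x)|,|f(\chi_t(x))|<\varepsilon$. Choosing first $M$ and then $t$ small enough completes the argument.
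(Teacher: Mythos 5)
Your proof is correct and follows essentially the same route as the paper's: reduce the $p^0_t$ assertion to the $p^{main}_t$ one via \eqref{R_tilde_bound_int}, change variables, use concentration of the stable densities $g^{t,x}$ to cut off the tail (the uniform bound $g^{t,x}\leq CG^{(\alpha)}$ coming from \eqref{g_bound0} with the parameters in their compact ranges), and reduce the bulk to $\sup_x|f(\chi_t(x))-f(x)|\to 0$. The only stylistic difference is in that last step: the paper simply invokes Proposition~\ref{pCflow} ($C^{-1}|x|\leq|\chi_t(x)|\leq C|x|$ for $|x|$ large, uniformly in $t\leq T$) and leaves the compact-$x$ range implicit, whereas you spell out both regimes — Gronwall on $|x|\leq R^*$ and near-isometry of $\chi_t$ on $|x|>R^*$ — which is slightly more self-contained. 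One small point to make your sketch airtight: the bi-Lipschitz constants of $\chi_t$ tending to $1$ controls differences $|\chi_t(x)-\chi_t(x')|$ but not $|\chi_t(x)|$ itself; you also need a uniform bound on, say, $|\chi_t(0)|$ for $t\leq T$ (which follows from the linear growth of $B_t$ exactly as in the proof of Proposition~\ref{pCflow}) to conclude $|\chi_t(x)|>R$ when $|x|>R^*$.
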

 \begin{proof}  We first note that there exists $C>1$ such that, for $|x|$ large enough,
$$
C^{-1}|x|\leq |\chi_t(x)|\leq  C|x|,
$$
see Proposition \ref{pCflow}. Since $f(x)\to 0, |x|\to \infty$, this gives
 \be\label{delta_f}
 \sup_x|f(\chi_t(x))-f(x)|\to 0, \quad t\to 0.
 \ee
 Next, $g^{t,x}$ are stable densities with uniformly bounded intensities and shifts, and thus   for every $\eps>0$
$$
\sup_{x}\int_{|w|>\eps}{1\over t^{1/\alpha}}g^{t,x}\left({w\over t^{1/\alpha}}\right)\, dw\to 0, \quad t\to 0.
 $$
Since $f\in C_\infty$ is uniformly continuous, this yields
$$
 \sup_{x}\left|\int_{\Re}{1\over t^{1/\alpha}}g^{t,x}\left({y-\chi_t(x)\over t^{1/\alpha}}\right)  f(y)  \, dy-f(\chi_t(x))\right|\to 0, \quad t\to 0,
 $$
 which proves the first assertion.  The second assertion follows from the first one by \eqref{R_tilde_bound_int}.
  \end{proof}

\section{Proof of Theorem \ref{mainthm1}}\label{s5}

We have defined the function $p_t(x,y)$ as a solution to the {integral} equation \eqref{eq_int}. In this section we make a further analysis of its representation \eqref{sol_1} and prove that function $p_t(x,y)$, in a  certain approximate sense, provides  a fundamental solution  to the Cauchy problem for the operator $\prt_t-L$. This fact will be a cornerstone for  the   proof of Theorem \ref{mainthm1}.

\subsection{Continuity properties and approximate fundamental solution}\label{s45}
Denote
$$
P_tf(x)=\int_{\Re}f(y)p_t(x,y)\, dy,\quad t>0, \quad P_0f(x)=f(x).
$$

\begin{lem}\label{l1}
For a given  bounded measurable $f$, the function $P_tf(x)$ is continuous w.r.t. $(t,x)\in (0, \infty)\times \Re$.

For $f\in C_\infty$, one has  $P_tf\in C_\infty, t\geq 0$, and $P_t, t\geq 0$ is a continuous family of bounded linear operators in $C_\infty$.
\end{lem}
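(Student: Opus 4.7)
The basic strategy is to exploit the decomposition $p_t=p_t^0+r_t$ from \eqref{sol} and write $P_tf=P_t^{(0)}f+P_t^{(r)}f$ accordingly. The uniform $L_1$-in-$y$ bounds $\sup_x\int |p_t^0(x,y)|\,dy\le C$ (which follows from \eqref{p_0_kernel} and \eqref{G_kappa_bound} applied with $\alpha=\beta=\gamma$) and $\sup_x\int |r_t(x,y)|\,dy\le Ct^\delta$ from \eqref{r_bound} together yield $\|P_tf\|_\infty\le C\|f\|_\infty$ uniformly in $t\in[0,T]$, which is the bulk of the boundedness claim.

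For the joint continuity of $P_tf(x)$ on $(0,\infty)\times\Re$ for bounded measurable $f$, I first note that $p_t^0(x,y)$ is jointly continuous in $(t,x,y)$: the flow $(t,y)\mapsto \kappa_t(y)$ is continuous by classical ODE theory (the Lipschitz estimate \eqref{Lip_B} is integrable in $t$), the averaged parameters $\wt\lambda_t(y),\wt\rho_t(y),\wt\upsilon_t(y)$ are continuous in $(t,y)$, and an $\alpha$-stable density depends continuously on its parameters through the Fourier representation \eqref{stable_den}--\eqref{stable_psi}. Then, working on a compact neighbourhood $K$ of a given $(t_0,x_0)$, the majorant from \eqref{p_0_kernel} together with \eqref{G_kappa_bound} provides a dominating integrable function (one splits into $|\kappa_t(y)-x_0|\le 2\,\mathrm{diam}(K)$ and its complement, where on the complement $G^{(\alpha)}$ is comparable to its value at $x_0$), so dominated convergence gives continuity of $P_t^{(0)}f(x)$. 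For the residual, I rearrange
\begin{equation*}
P_t^{(r)}f(x)=\int_0^t P^{(0)}_{t-s}F_s(x)\,ds,\qquad F_s(z):=\int_\Re \Psi_s(z,y)f(y)\,dy,
\end{equation*}
and observe that $\|F_s\|_\infty\le Cs^{-1+\delta}\|f\|_\infty$ by \eqref{H_bound_Psi}. The integrand is continuous in $(t,x)$ (for $s<t$) by the previous step, and dominated by the integrable bound $Cs^{-1+\delta}\|f\|_\infty$, so DCT yields continuity.

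For the second part, I would first establish $P_tf\in C_\infty$ whenever $f\in C_\infty$. Continuity has just been proved; vanishing at infinity uses the flow bound $C^{-1}|y|\le|\kappa_t(y)|\le C|y|$ for large $|y|$ (the analogue of Proposition \ref{pCflow} for $\kappa$, obtained the same way) combined with the stable majorant \eqref{p_0_kernel}: given $\epsilon>0$, pick $R$ with $|f(y)|<\epsilon$ for $|y|>R$, so that $|P_t^{(0)}f(x)|\le C\epsilon+\|f\|_\infty\int_{|y|\le R}p_t^0(x,y)\,dy$, and the latter integral tends to $0$ as $|x|\to\infty$ since $|\kappa_t(y)-x|\to\infty$ uniformly on $|y|\le R$. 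The residual part is controlled by $\|f\|_\infty \int_0^t\int\int p^0_{t-s}(x,z)|\Psi_s(z,y)|\,dy\,dz\,ds$; the inner integral in $y$ is bounded by $Cs^{-1+\delta}$, and vanishing at infinity of the resulting integral follows from the tail bound \eqref{Psi_tail} (absorbed into a split as above).

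Finally, strong continuity $t\mapsto P_tf$ in $C_\infty$: at $t_0=0$, Lemma \ref{l_delta} gives $\|P_t^{(0)}f-f\|_\infty\to 0$, and $\|P_t^{(r)}f\|_\infty\le Ct^\delta\|f\|_\infty\to 0$ by \eqref{r_bound}. At $t_0>0$, I combine the pointwise joint continuity from Paragraph 2 with the uniform-in-$t$ tail estimate for $P_tf$: on $|x|>R$ both $|P_tf(x)|$ and $|P_{t_0}f(x)|$ are small (uniformly in $t$ in a neighbourhood of $t_0$), while on $|x|\le R$ continuity is uniform by continuity on the compact $[t_0-\eta,t_0+\eta]\times[-R,R]$. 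The main technical obstacle is precisely this last step, upgrading pointwise continuity to $\|\cdot\|_\infty$-continuity at $t_0>0$: it hinges on showing that the decay of $P_tf$ at spatial infinity is uniform in $t$ on compact time intervals, which in turn rests on the flow bounds for $\kappa_t$ (where the possibly unbounded drift $b$ makes the argument delicate) and the tail bound \eqref{Psi_tail} for the resolvent kernel.
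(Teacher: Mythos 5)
Your proof is correct and follows essentially the same route as the paper, which itself only sketches the argument (referring to \cite[Section~3.3]{KK15} for the domination-convergence details): the same decomposition via \eqref{sol_2}, the same key bounds \eqref{p_0_kernel}, \eqref{H_bound_Psi}, \eqref{Psi_tail}, the same use of dominated convergence, and the same appeal to Lemma~\ref{l_delta} for strong continuity at $t_0=0$. The only cosmetic difference is that you invoke a $\kappa$-analogue of Proposition~\ref{pCflow} for the vanishing-at-infinity step, whereas the argument you actually write down only needs boundedness of $\kappa_t$ on compact $y$-sets together with \eqref{Psi_tail}, as in the paper.
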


\begin{proof}
The proof is fairly standard, thus  we just sketch it.  We have
 \be\label{sol_2}
 P_tf(x)=\int_{\Re}f(y)p_t^0(x,y)\, dy+\int_0^t\int_{\Re}p^0_{t-s}(x,y)\Psi_s^f(y)\, dy ds,
 \ee
\be\label{Psi_f}
 \Psi_t^f(y)=\int_{\Re}\Psi_t(y,z)f(z)\, dz.
 \ee
The function  $p_t^0(x,y)$, given by an explicit formula  \eqref{p^0}, is continuous w.r.t. $x,t$ for any $y$. Then one can deduce
continuity of $P_tf(x)$ using the bounds \eqref{p_0_kernel}, \eqref{H_bound_Psi} and a standard domination convergence argument; e.g. \cite[Section 3.3]{KK15}. Using \eqref{Psi_tail}, one can show in addition that
\be\label{conv_x}
  P_{t}f(x)\to 0, \quad |x|\to \infty
  \ee
  uniformly in $t\in [0,T ].$ Combined with continuity of $P_tf(x)$ in $(t,x)\in (0, T]\times \Re$ and Lemma \ref{l_delta},
  this yields continuity in $t\in [0,T]$ of the family $\{P_tf\}\in C_\infty.$ Clearly, each $P_t$ is a linear operator; these operators are bounded thanks to \eqref{p_0_kernel}, \eqref{H_bound_Psi}.
  \end{proof}

 \begin{lem}\label{l2}
For a given  $f\in C_\infty$, the function
$\Psi^f_t(x)$ is continuous w.r.t. $(t,x)\in (0, \infty)\times \Re$.
In addition, for any $0<\tau<T$
\be\label{111}
\Psi^f_t(x)\to 0,\quad  |x|\to \infty
\ee
uniformly on $t\in [\tau, T]$.
\end{lem}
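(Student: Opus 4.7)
The plan is to reduce both assertions to the series expansion $\Psi^f_t(x) = \sum_{k\geq 1} (\Phi^{\star k})^f_t(x)$, where $(\Phi^{\star k})^f_t(x) := \int_\Re \Phi^{\star k}_t(x,y) f(y)\, dy$. By \eqref{Phi_bound_k} each summand is bounded by $\|f\|_\infty t^{-1+k\delta} C^k \Gamma(\delta)^k/\Gamma(k\delta)$, so the series converges absolutely and uniformly on $[\tau, T] \times \Re$ for every fixed $0 < \tau < T$. Hence if each summand is separately continuous in $(t, x)$ and individually satisfies the tail decay \eqref{111}, the whole sum inherits both properties.

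Continuity will be proved by induction in $k$. For $k = 1$ I would write $\Phi_t(x, y) = L_x p_t^0(x, y) - \partial_t p_t^0(x, y)$. The function $p_t^0(x, y) = h^{t, y}(\kappa_t(y) - x)$ from \eqref{p^0} and its $t$- and $x$-derivatives up to second order are jointly continuous in $(t, x, y)$ for $t > 0$; condition $\mathbf{H}^{cont}$ (together with the remark after it) makes the L\'evy kernel $\mu(x, du)$ weakly continuous in $x$. Pointwise joint continuity of $\Phi_t(x, y)$ in $(t, x)$ then follows by standard dominated convergence based on $\int (u^2 \wedge 1)\, |\mu|(x; du) < \infty$ uniformly in $x$ (ensured by $\mathbf{H}^{(\alpha)}$ and \eqref{cond_ups_small}); combined with the integrable domination \eqref{Phi_bound_vague}--\eqref{H_bound}, DCT yields continuity of $\Phi^f_t(x)$ on $(0, \infty) \times \Re$. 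For $k \geq 2$, the identity
\[
(\Phi^{\star k})^f_t(x) = \int_0^t \int_\Re \Phi_{t-s}(x, z) (\Phi^{\star(k-1)})^f_s(z)\, dz\, ds,
\]
the inductive hypothesis, and the bound $|(\Phi^{\star(k-1)})^f_s(z)| \leq C s^{-1+(k-1)\delta}$ (which keeps the $s$-integral absolutely convergent) propagate continuity upward.

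For the tail assertion I would fix $\eps > 0$, invoke \eqref{Psi_tail} to pick $R$ with $\sup_{t \in (0,T], x \in \Re} t^{1-\delta} \int_{|y-x|>R}|\Psi_t(x,y)|\, dy \leq \eps$, and split the defining integral at $|y-x| = R$:
\[
|\Psi^f_t(x)| \leq \Bigl(\sup_{|y-x|\leq R}|f(y)|\Bigr) \int_\Re |\Psi_t(x, y)|\, dy + \|f\|_\infty \cdot \eps\, t^{-1+\delta}.
\]
By \eqref{H_bound_Psi}, for $t \in [\tau, T]$ this is bounded by $C\tau^{-1+\delta}\bigl(\sup_{|y-x| \leq R}|f(y)| + \eps\|f\|_\infty\bigr)$; the first summand tends to $0$ as $|x| \to \infty$ since $f \in C_\infty$, and $\eps$ is arbitrary, so \eqref{111} follows.

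The hard part will be the continuity in $x$ of the $\nu$-integral inside $L_x p_t^0(x, y)$: the test function $u \mapsto p_t^0(x+u, y) - p_t^0(x, y) - u\, \partial_x p_t^0(x, y)\, \mathbf{1}_{|u| \leq 1}$ is not compactly supported away from $0$, while $\mathbf{H}^{cont}$ provides continuity only for such test functions. I would resolve this by a standard $\eps$-truncation argument: the piece over $|u| \leq \eps$ is dominated by $\|\partial^2_{xx} p_t^0\|_\infty \int_{|u| \leq \eps} u^2\, |\mu|(x; du)$ uniformly in $x$ via $\mathbf{H}^{(\alpha)}$ and \eqref{cond_ups_small}, hence vanishes as $\eps \to 0$; the remaining piece on $\{|u| > \eps\}$ is directly continuous in $x$ by $\mathbf{H}^{cont}$.
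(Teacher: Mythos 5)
Your overall strategy is the same one the paper uses: reduce to showing $\Phi_t(x,y)$ is jointly continuous in $(t,x)$, then propagate this through the series/integral equation for $\Psi$, and obtain the decay in $x$ from the tail estimate \eqref{Psi_tail}. The tail argument, the convergence argument, and the $\eps$-truncation of the $\nu$-integral are all fine. However, there is a genuine gap precisely at the point you yourself flag as ``the hard part.'' You claim that ``the remaining piece on $\{|u| > \eps\}$ is directly continuous in $x$ by $\mathbf{H}^{cont}$.'' This does not follow. Condition $\mathbf{H}^{cont}$ gives continuity of $x \mapsto \int f(u)\,\nu(x,du)$ only for $f \in C(\Re)$ with compact support in $\Re \setminus\{0\}$. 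The integrand on $\{|u|>\eps\}$, namely $u \mapsto p_t^0(x+u,y) - p_t^0(x,y) - u\,\partial_x p_t^0(x,y)\,\mathbf{1}_{|u|\leq 1}$, fails both requirements: it is not compactly supported (a fixable matter, using \eqref{cond_ups_large}), and more seriously it is \emph{discontinuous at $|u|=1$} with a jump of size $|\partial_x p_t^0(x,y)|$, so $\mathbf{H}^{cont}$ simply does not apply. Since nothing in the hypotheses prevents $\nu(x,\cdot)$ from putting mass near or at $|u|=1$ in an $x$-dependent way, this jump is not negligible.

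The paper avoids this by not placing the compensating indicator at the fixed level $|u|=1$ at all. It works instead with the $t$-level truncation decomposition \eqref{Phi_decomp} and then replaces $\mathbf{1}_{|u|\leq t^{1/\alpha}}$ by $1-\theta(ut^{-1/\alpha})$ for a smooth cutoff $\theta$ (with $\theta = 0$ on $|u|\leq 1/2$ and $\theta=1$ on $|u|\geq 1$), absorbing the discrepancy into a modified drift $\widehat b_t$. The resulting compensator weight $u\bigl(1-\theta(ut^{-1/\alpha})\bigr)\mathbf{1}_{|u|\leq 1}$ is continuous in $u$ and, for $t<1$, is supported strictly inside $\{|u|<1\}$, so the $|u|=1$ discontinuity never arises. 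In your version, a smooth cutoff replacing $\mathbf{1}_{|u|\leq 1}$ would again leave a discontinuous error term $u(\theta(u)-\mathbf{1}_{|u|\leq 1})$ concentrated at $|u|=1$, so that alone does not close the argument; you must follow the paper's route of moving the truncation level to $t^{1/\alpha}$ first (where the cutoff can be made to vanish identically in a neighbourhood of $|u|=1$). The paper also flags a second place where the naive decomposition fails to be continuous: $b_t$ itself contains the discontinuous truncated moment $m_t^\mu$, and this too is cured by the same $\theta$-rearrangement. Without these adjustments the joint continuity of $\Phi_t(x,y)$ --- and hence the base case of your induction --- is not established.

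Two minor remarks: (i) in your small-$u$ estimate, the domination should use $|\nu|(x;du)$ rather than $|\mu|(x;du)$, since the $\mu^{(\alpha)}$ part is handled separately by its explicit density and the H\"older continuity of $\lambda,\rho$; (ii) your use of \eqref{Phi_bound_k} to justify absolute and uniform convergence of the series on $[\tau,T]\times\Re$ is correct.
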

  \begin{proof} The argument here is close to the one from the previous proof, with $p_t^0$ changed to $\Phi$; recall that $\Psi_t(x,y)$ satisfies
   \be\label{Psi_2}
 \Psi_t(x,y)=\Phi_t(x,y)+\int_0^t\int_{\Re}\Phi_{t-s}(x,y')\Psi_s(y',y)\,dy.
 \ee
Therefore we omit the details, and only discuss two points which make the difference with the previous proof. First, the bound \eqref{Phi_bound_vague}, when compared to \eqref{p_0_kernel}, contains an extra term $t^{-1+\delta}$. This is the reason why \eqref{111} is stated for $t\in [\tau, T]$ with \emph{positive}  $\tau$. Next, we yet have to verify that $\Phi_t(x,y)$ is continuous in $x,t$. Recall the decomposition \eqref{Phi_decomp}, and observe that the term $\Phi^{(\alpha)}$ has the required continuity. However, two other terms in the decomposition \eqref{Phi_decomp} may fail to be continuous. Namely, since the function $1_{|u|>t^{1/\alpha}}$ is discontinuous,  weak continuity of the kernel $\nu(x, du)$ does not imply, in general, continuity of the corresponding  integral $m_t^\nu(x)$. This trouble is artificial, and can be fixed by a proper re-arrangement of the compensating terms in these two summands. Namely, we
  take function $\theta\in C(\Re)$ with
 $$
 \theta(u)=0, \quad |u|\leq {1\over 2}, \quad \theta(u)=1, \quad |u|\geq 1,
 $$
 and put
 $$
 \widehat{m}_t^\nu(x)=\int_{|u|\leq 1}u\theta(ut^{-1/\alpha})\nu(x,du), \quad \widehat b_t=b-m_t^{(\alpha)}-\widehat m_t^{\nu},
 $$
 $$
 \widehat{\Phi}^{drift}_t(x,y)=\Big(\widehat{b}_t(x)-B_t(\kappa_t(y))\Big)\prt_x p_t^0(x,y).
 $$
 $$
 \widehat{\Phi}^{\nu}_t(x,y)=\int_{\Re}\Big(p_t^0(x+u,y)-p_t^0(x,y)-u\big(1-\theta(ut^{-1/\alpha})\big)\prt_x p_t^0(x,y)1_{|u|\leq 1}\Big)\nu(x;du).
$$
Then
$$
\widehat{\Phi}^{drift}_t(x,y)+ \widehat{\Phi}^{\nu}_t(x,y)={\Phi}^{drift}_t(x,y)+ {\Phi}^{\nu}_t(x,y),
$$
and the terms $\widehat{\Phi}^{drift}$ and $\widehat{\Phi}^{\nu}$ have the required continuity. The latter can be verified via  a routine calculation  involving the continuity condition $\mathbf{H}^{cont}$, we omit a detailed discussion.
  \end{proof}

The parametrix construction described in Section \ref{s41} originates in the general interpretation of $p_t(x,y)$ as a (sort of) fundamental solution to the Cauchy problem for the operator $\prt_t-L$; that is, in other words, $p_t(x,y)$ should satisfy  the backward Kolmogorov equation for the (yet unknown) process $X$.  In some cases one can show that $p_t(x,y)$ indeed satisfies
\be\label{L_fund}
(\prt_t-L_x)p_t(x,y)=0
\ee
in a classical way; for instance, this is the mainstream approach in the  classical diffusive/parabolic setting,  see \cite{Fr64}. A necessary pre-requisite for such an approach is to prove that $p_t(x,y)$ belongs to $C^1$ w.r.t. $t$ and to $C^2_\infty$ (which is just the domain of $L$) w.r.t. $x$. In the current setting, zero order approximation $p_t^0(x,y)$ has the required smoothness properties, however one can hardly extend these properties to $p_t(x,y)$ using \eqref{sol_2} in the way used in the proof of Lemma \ref{l1}. The main obstacle is that $\prt_{x} p_t^0(x,y),\prt_{xx}^2 p_t^0(x,y)$  exhibit  strongly singular behavior as $t\to 0$ (see \eqref{g_bound_x}, \eqref{g_bound_xx}), which does not allow one to differentiate  \eqref{sol_2}. This observation leads to the following auxiliary construction.
Define for $\eps>0$
\begin{equation}\label{pe}
p_{t,\epsilon}(x,y)=p_{t+\epsilon}^0(x,y)  + \int_0^t \int_{\Re}  p_{t-s+\eps}^0(x,y')  \Psi_s(y',y) dy'ds,
\end{equation}
\begin{equation} \label{Pte}
P_{t,\epsilon} f(x)=\int_\rd p_{t,\epsilon}(x,y)f(y)dy, \quad f\in C_\infty.
\end{equation}
The following lemma shows that  $p_{t,\eps}(x,y)$ approximates $p_{t}(x,y)$ and satisfies an approximative analogue of \eqref{L_fund}. This is our reason to call the family   $\{p_{t,\eps}(x,y), \eps>0\}$ an \emph{approximate fundamental solution}.

\begin{lem}\label{l3} For every $f\in C_\infty$ we have the following.
\begin{enumerate}
 \item    For every  $T>0$,
  \be\label{conv_pte}
 \|P_{t,\epsilon}f- P_t f\|_\infty\to 0, \quad \epsilon\to 0,
  \ee
   uniformly in $t\in (0,T ]$, and
   \be\label{conv_x_eps}
  P_{t,\epsilon}f(x)\to 0, \quad |x|\to \infty
  \ee
  uniformly in $t\in (0,T ],\eps\in (0,1].$

  \item
     \be\label{L_delta_eps}
 \lim_{t,\eps\to 0+} \|P_{t,\eps} f-f\|_\infty =0.
  \ee

\item  For every  $\eps>0$,  $P_{t,\eps}f(x)$ belongs to $C^1$ as a function of $t$, to $C^2_\infty$ as a function of $x$, and  $\prt_tP_{t,\eps}f(x), L_x P_{t,\eps}f(x)$ are continuous w.r.t. $(t,x)$.
   \item For every $0<\tau<T$, $R>0$
  \be\label{conv_loc}
Q_{t,\eps} f(x)= \big(\partial_t-L_x\big) P_{t,\epsilon} f (x)\to 0, \quad \epsilon\to 0,
\ee
uniformly in  $(t,x)\in [\tau,T]\times [-R,R]$. In addition,
 \be\label{conv_int}
\int_0^T\sup_{x\in [-R,R]}|Q_{t,\eps} f(x)|\, dt\to 0, \quad \epsilon\to 0.
\ee

\end{enumerate}
\end{lem}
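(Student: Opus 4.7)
The plan is that the time shift by $\eps$ in \eqref{pe} regularizes the singularities of $p_t^0$ and $\Phi_t$ at $t=0$, which would otherwise prevent the formal identity $(\partial_t - L_x)P_t f = 0$ from being justified rigorously. The single computation driving parts~(3) and~(4) is the following: differentiate \eqref{pe}--\eqref{Pte} in $t$, use the restatement $\partial_t p_{t'}^0 = L_x p_{t'}^0 - \Phi_{t'}$ of \eqref{Phi}, and bring $L_x$ outside the integrals to obtain
\begin{equation*}
Q_{t,\eps} f(x) = -\int_\Re \Phi_{t+\eps}(x,y) f(y)\, dy + \int_\Re p_\eps^0(x,y) \Psi_t^f(y)\, dy - \int_0^t\int_\Re \Phi_{t-s+\eps}(x,y)\Psi_s^f(y)\, dy\, ds.
\end{equation*}
As $\eps\to 0$ the three terms on the right tend, respectively, to $-\Phi_t^f(x)$, $\Psi_t^f(x)$, and $-(\Phi\star\Psi)_t^f(x)$, where $\Phi_t^f(x) := \int_\Re \Phi_t(x,y)f(y)\,dy$; their sum vanishes by the resolvent equation $\Psi = \Phi + \Phi\star\Psi$ built into \eqref{Psi}.

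For Part~(1) I write the difference $P_{t,\eps}f - P_t f$ along the decomposition \eqref{sol_2} and apply dominated convergence: pointwise convergence in $\eps$ follows from continuity of $p_{t'}^0$ in $t'$, while the $\eps$-independent integrable dominants come from \eqref{p_0_kernel} for the zero-order piece and from \eqref{p_0_kernel} combined with \eqref{H_bound_Psi} for the $\Psi$-integral (the latter majorized by $C\|f\|_\infty s^{-1+\delta}$). The uniform decay \eqref{conv_x_eps} follows by the same tail-splitting used in the proof of Lemma~\ref{l1}, invoking \eqref{Psi_tail}. Part~(2) is then immediate: $P_{t,\eps}f - f$ splits into $\int_\Re p_{t+\eps}^0(x,y)f(y)\,dy - f(x)$, which vanishes uniformly in $x$ as $t+\eps\to 0$ by Lemma~\ref{l_delta}, and a remainder bounded by $C\|f\|_\infty t^\delta$. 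For Part~(3), keeping $t'\ge\eps$ throughout makes bounds analogous to \eqref{g_bound_x}, \eqref{g_bound_xx} on $\partial_x p_{t'}^0$ and $\partial_{xx}^2 p_{t'}^0$ locally uniform in $(t,x)$ and integrable in $y$; this legitimizes differentiation under the integral sign as well as the interchange of $L_x$ with $dy$- and $ds$-integration, and the continuity of the resulting derivatives in $(t,x)$ is then routine.

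Part~(4) reduces to controlling each term of the key identity as $\eps\to 0$. Pointwise in $(t,x)$ with $t>0$: the first term tends to $\Phi_t^f(x)$ by continuity of $\Phi_t$ in $t$ (after the $\widehat\Phi$-rearrangement from the proof of Lemma~\ref{l2}) together with the dominant \eqref{Phi_bound_vague}; the second tends to $\Psi_t^f(x)$ by Lemma~\ref{l_delta} applied to $\Psi_t^f\in C_\infty$ (Lemma~\ref{l2}); the third tends to $(\Phi\star\Psi)_t^f(x)$ by dominated convergence with the $\eps$-uniform dominant $Cs^{-1+\delta}(t-s)^{-1+\delta}\|f\|_\infty$ obtained from \eqref{Phi_bound_vague} and \eqref{H_bound_Psi}. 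The resolvent identity closes the argument. The main obstacle — the only place where nontrivial care is required — is upgrading these pointwise convergences to the uniformity demanded by \eqref{conv_loc} and to the $L^1$-in-$t$ convergence \eqref{conv_int}. Uniformity on $[\tau,T]\times[-R,R]$ is achieved via equicontinuity in $(t,x)$ of the families $\{\Psi_t^f\}$ and $\{\Phi_t^f\}$, itself a consequence of Lemmas~\ref{l1}--\ref{l2} and the $L^1$-estimates of Section~\ref{s42}; the bound \eqref{conv_int} then follows by combining the pointwise-in-$t$ vanishing with the $\eps$-uniform, $t$-integrable majorant $C(t^{-1+\delta} + \int_0^t s^{-1+\delta}(t-s)^{-1+\delta}\,ds)$ on $[0,T]$.
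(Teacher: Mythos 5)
Your proposal is correct and follows essentially the same route as the paper: the same key identity $Q_{t,\eps}f(x)=-\int\Phi_{t+\eps}(x,y)f(y)\,dy+\int p_\eps^0(x,y)\Psi_t^f(y)\,dy-\int_0^t\int\Phi_{t-s+\eps}(x,y)\Psi_s^f(y)\,dyds$, the same resolvent cancellation via \eqref{Psi_2}, the same dominated-convergence treatment of parts (1)--(3) (which the paper omits, referring to \cite[Sec.~4.1]{KK15}), and the same use of Lemma \ref{l_delta} together with continuity of $\Phi$ and $\Psi^f$ for the uniform limit \eqref{conv_loc}. The only visible difference is in \eqref{conv_int}: you appeal to dominated convergence with the $\eps$-uniform integrable majorant $Ct^{-1+\delta}+Ct^{-1+2\delta}$, whereas the paper first shows $\int_0^\tau\sup_x|Q_{t,\eps}f(x)|\,dt\leq C\tau^{\delta}\|f\|_\infty$ and then combines this with \eqref{conv_loc} on $[\tau,T]$; both are valid one-line upgrades of the same estimates, so this is a cosmetic variation rather than a different method.
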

\begin{proof} Statements 1 -- 3 follow easily by the same continuity/domination argument which was used in  Lemma \ref{l1} and thus we omit the proof; see \cite[Section 4.1]{KK15} for a detailed exposition of similar group of statements.

To prove statement 4, we apply the argument from the proof of \cite[Lemma 5.2]{KK15}. Since the additional time shift by $\eps>0$ removes the singularity at the point $t=0$ in \eqref{pe}, the continuity/domination argument similar to the one used in Lemma \ref{l1}   allows one to interchange the operator $\big(\partial_t-L_x\big)$ with  the integrals in the definition of $P_{t,\eps}f$. Then, recalling the definition \eqref{Phi} of $\Phi_t(x,y)$  and  \eqref{Psi_f}, we get
$$
Q_{t,\eps} f(x)=-\int_{\Re}\Phi_{t+\epsilon}(x,y)f(y)\, dy-\int_0^t \int_{\Re} \Phi_{t-s+\eps}(x,y)  \Psi_s^f(y)\, dyds+
\int_{\Re} p_{\epsilon}^0(x,y) \Psi_t^f(y)\, dy,
$$
see \cite[(4.13)]{KK15}
By the continuity of $\Phi_{t}(x,y)$ in $t$, we have
$$\ba
\int_{\Re}\Phi_{t+\epsilon}(x,y)f(y)\, dy+\int_0^t \int_{\Re} \Phi_{t-s}(x,y)  \Psi_s^f(y)\, dyds&\to \int_{\Re}\Phi_{t}(x,y)f(y)\, dy
\\&+\int_0^t \int_{\Re} \Phi_{t-s+\eps}(x,y)  \Psi_s^f(y)\, dyds, \quad \eps\to 0
\ea
$$
uniformly in $x\in [-R,R], t\in [\tau, T]$. On the other hand, since $\Psi^f_t(x)$ is continuous, we have by Lemma \ref{l_delta}
$$
\int_{\Re} p_{\epsilon}^0(x,y) \Psi_t^f(y)\, dy\to \Psi_t^f(x), \quad \eps\to 0
$$
uniformly in $x\in [-R,R], t\in [\tau, T]$, which combined with \eqref{Psi_2}
completes the proof of \eqref{conv_loc}. On the other hand it follows from \eqref{p_0_kernel} and \eqref{H_bound_Psi} that
$$
\int_0^\tau\sup_x|Q_{t,\eps} f(x)|\, dt\leq C\tau^{\delta}\sup_y|f(y)|.
$$
Combined with \eqref{conv_loc}, this yields \eqref{conv_int}.
\end{proof}

\begin{dfn}\label{dah} We say a continuous function
$h(t,x)$ to be  \emph{approximate harmonic} for an operator $\prt_t-L$, if there exists a family $\{h_\eps(t,x), \eps\in (0,1]\}\in C([0,\infty)\times \Re)$ such that
\begin{itemize}
  \item[(i)]  for any $T>0$
  $$
  \sup_{x\in \Re,t\in [0,T]}|h_\eps(t,x)-h(t,x)|\to 0, \quad \eps\to 0, \quad \sup_{t\in [0,T], \eps\in (0,1]}|h_\eps(t,x)|\to 0, \quad |x|\to \infty;
  $$
  \item[(ii)] each function $h_\eps(t,x)$ is $C^1$ w.r.t. $t$, $C^2_\infty$ w.r.t. $x$, and for every $0<\tau<, R>0$
  $$
  \sup_{x\in [-R,R],t\in [\tau, T]}|(\prt_t-L_x)h_\eps(t,x)|\to 0, \quad \eps\to 0.
  $$
\end{itemize}
\end{dfn}

Note that, by Lemma \ref{l3}, for any  $f\in C_\infty$ the function
$h^f(t,x)=P_tf(x)$ is approximate harmonic for $\prt_t-L$.  The corresponding approximating family is given by
\be\label{hfs}
h^f_\eps(t,x)=P_{t,\eps}f(x), \quad \eps>0.
\ee

\subsection{The Positive Maximum Principle and the semigroup properties}\label{s46}

In this section we establish the  semigroup properties for the family  of the operators $\{P_t, t\geq0\}$.
A classical method for this is based on the \emph{Positive Maximum Principle} (PMP) for the operator $L$. It is usually applied when $p_t(x,y)$ is a (true) fundamental solution for $\prt_t-L$; e.g. \cite{Ko89}. In our setting  $p_t(x,y)$ satisfies \eqref{L_fund} in a weaker approximate sense; however, the classical PMP-based argument admits an extension which is well applicable in such an approximate setting. This extended argument is essentially due to  \cite[Section 4]{KK15}. For the reader's  and further reference convenience, here we give a systematic version of this argument, based on the  notion of  approximate harmonic functions.

Recall that an operator $L$ with a domain $\mathcal{D}$  is said to satisfy PMP if for any $f\in \mathcal{D}$ and $x_0$ such that
$$
0\leq f(x_0)=\max_x f(x)
$$
one has
$$
Lf(x_0)\leq 0.
$$
Clearly, the operator \eqref{L_gen1} with the domain $\mathcal{D}=C^2_\infty$ satisfies PMP; note that  $Lf$ is continuous for any $f\in C_\infty^2$, but does not necessarily  belong to $C_\infty$.

\begin{prop}\label{p_PMP} Let $h(t,x)$ be an approximate harmonic function for $\prt_t-L$ and $h(0, \cdot)\geq 0$.

Then $h(t,\cdot)\geq 0, t> 0$.
\end{prop}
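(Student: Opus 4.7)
The plan is to run the classical positive-maximum-principle argument for the parabolic Cauchy problem, but carefully transplanted to the approximating family $h_\eps$ and then passed to the limit. I argue by contradiction: suppose $h(t_0,x_0)<0$ for some $t_0>0$, set $m:=-h(t_0,x_0)>0$, fix $\delta\in(0,m/(2t_0))$, and introduce the perturbation
\[
H(t,x):=h(t,x)+\delta t,\qquad H_\eps(t,x):=h_\eps(t,x)+\delta t.
\]
The term $\delta t$ is a supersolution gadget: it ensures $\prt_t h_\eps\leq -\delta$ at any minimum of $H_\eps$, which will give the strict quantitative inequality needed to collide with $(\prt_t-L_x)h_\eps\to 0$.

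Next I localise the minimum of $H_\eps$ on the slab $[0,t_0]\times\Re$ away from the parabolic boundary. Three ingredients are used, all coming from Definition~\ref{dah}(i) and the continuity of $h$. First, $\sup_{t,\eps}|h_\eps(t,x)|\to 0$ as $|x|\to\infty$, so there is $R>0$ with $H_\eps\geq -m/8$ on $[0,t_0]\times\{|x|>R\}$ uniformly in $\eps$. Second, since $h$ is continuous with $h(0,\cdot)\geq 0$ on the compact strip $[0,t_0]\times[-R,R]$, a uniform continuity argument at $t=0$ produces $\tau_0>0$ with $h(t,x)\geq -m/16$ on $[0,\tau_0]\times[-R,R]$, and then by uniform convergence $h_\eps\to h$ I have $H_\eps\geq -m/8$ on $[0,\tau_0]\times[-R,R]$ for small enough $\eps$. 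Third, $H_\eps(t_0,x_0)\to H(t_0,x_0)=-m+\delta t_0<-m/2$. Hence for all sufficiently small $\eps$ the infimum of $H_\eps$ on $[0,t_0]\times\Re$ is strictly smaller than $-m/2$, and therefore it is attained at some point $(t_\eps,x_\eps)\in[\tau_0,t_0]\times[-R,R]$.

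Now I invoke the smoothness from Definition~\ref{dah}(ii) at $(t_\eps,x_\eps)$. Since $H_\eps(\cdot,x_\eps)$ has a minimum at $t_\eps$ on $[0,t_0]$, either $t_\eps\in(0,t_0)$ and $\prt_tH_\eps(t_\eps,x_\eps)=0$, or $t_\eps=t_0$ and the one-sided condition gives $\prt_tH_\eps(t_\eps,x_\eps)\leq 0$; in both cases $\prt_t h_\eps(t_\eps,x_\eps)\leq -\delta$. On the other hand, $-h_\eps(t_\eps,\cdot)$ attains its maximum $-H_\eps(t_\eps,x_\eps)+\delta t_\eps>0$ at $x_\eps$, and since $-h_\eps(t_\eps,\cdot)\in C^2_\infty$, the positive maximum principle for the L\'evy-type operator $L$ on $C^2_\infty$ yields $L_x(-h_\eps)(t_\eps,x_\eps)\leq 0$, i.e. $L_xh_\eps(t_\eps,x_\eps)\geq 0$. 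Combining,
\[
(\prt_t-L_x)h_\eps(t_\eps,x_\eps)\leq -\delta<0.
\]
But $(t_\eps,x_\eps)\in[\tau_0,t_0]\times[-R,R]$, so Definition~\ref{dah}(ii) forces $(\prt_t-L_x)h_\eps(t_\eps,x_\eps)\to 0$ as $\eps\to 0$. This contradicts the previous displayed inequality and concludes the proof.

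The main obstacle is precisely the localisation step: the approximate-harmonicity assumption gives the vanishing of $(\prt_t-L_x)h_\eps$ only on compact sets $[\tau,T]\times[-R,R]$ with $\tau>0$, so one must rule out the possibility that the minimising sequence $(t_\eps,x_\eps)$ drifts either to infinity in space or down to the initial time. The spatial part is handled by the uniform decay at infinity in Definition~\ref{dah}(i); pushing the minimiser strictly away from $t=0$ uses the combination of $h(0,\cdot)\geq 0$, continuity of $h$, and uniform-in-$\eps$ convergence $h_\eps\to h$. This is where the linear-in-$t$ perturbation $\delta t$ (rather than, say, $\delta e^{\lambda t}$) is convenient, because a constant lower bound of the form $\prt_t h_\eps\leq -\delta$ is precisely what is needed to close the contradiction against the qualitative convergence $(\prt_t-L_x)h_\eps\to 0$.
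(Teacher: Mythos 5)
Your proof is correct and follows essentially the same strategy as the paper's: perturb the approximating family by a linear term $\delta t$ (the paper writes $\theta t$), use the uniform decay at spatial infinity from Definition~\ref{dah}(i) together with continuity and $h(0,\cdot)\geq 0$ to localize the minimizer of the perturbed function in a compact slab $[\tau_0,t_0]\times[-R,R]$ away from the parabolic boundary, apply the positive maximum principle for $L$ on $C^2_\infty$ in the spatial variable and the elementary first-order condition in time, and then collide the resulting strict inequality $(\prt_t-L_x)h_\eps(t_\eps,x_\eps)\leq-\delta$ with the uniform-on-compacta vanishing from Definition~\ref{dah}(ii). The differences from the paper are purely presentational (you fix a specific witness $(t_0,x_0)$ and build the constants explicitly, while the paper works directly with the global infimum), so this is the same proof.
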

\begin{proof} Assuming $h(t,x)$ being negative at some point, we have that for some $T>0$
\be\label{ass}
\inf_{t\leq T,x\in \Re} h(t,x)<0.
\ee
Let $\{h_\eps(t,x), \eps\in (0,1]\}$ be the approximating family from  Definition \ref{dah}, then
by  assertion (i)  there exist $\upsilon>0, \theta>0, \eps_1>0$ such that
$$
\inf_{t\leq T,x\in \Re} \Big(h_\eps(t,x)+\theta t\Big)<-\upsilon, \quad \eps<\eps_1.
$$
Denote
$$
u_{\epsilon}(t,x)= h_\eps(t,x)+\theta t,\quad \eps\in (0,1]
$$
these functions are continuous in $(t,x)$ (because each $h_\eps$ is continuous) and satisfy $$
u_{\epsilon}(t,x)\to \theta t>0, \quad |x|\to\infty
$$
uniformly in $t\in [0, T]$ (because of the assertion (i)). Then for some $R>0$ and $\eps<\eps_1$
$$
\inf_{t\leq T,x\in \Re} u_\eps(t,x)=\inf_{t\leq T,x\in \Re} \Big(h_\eps(t,x)+\theta t\Big)<0
$$
is actually attained at some point in $[0,T]\times [-R,R]$;  we fix one such a point for each $\eps$, and denote it by  $(t_\eps, x_\eps)$.
We observe that $t_\eps$ is separated from $0$ when $\eps$ is small enough. Indeed, by the assertion (i) and non-negativity assumption $h(0, x)\geq 0$, there exist $\eps_0>0$, $\tau>0$ such that
$$
u_{\epsilon}(t,x)\geq h_{\epsilon}(t,x)\geq  h_{\epsilon}(0,x)-{\upsilon\over 2}\geq -{\upsilon\over 2}, \quad t\leq \tau, \quad \eps<\eps_0, \quad x\in \Re.
$$
 Since
$$
u_{\epsilon}(t_\eps,x_\eps)=\min_{t\in [0, T], x\in \Re}u_\eps(t,x)<-\upsilon,
$$
this yields  $t_\eps>\tau$ for $\eps<\eps_0$.

Now we can conclude the proof in a quite standard way.  Let $\eps<\eps_0\wedge \eps_1$. Since $x_\eps$ is the maximal point for $-u_\eps(t_\eps, \cdot)$ and $-u_\eps(t_\eps, x_\eps)>0$, we have by the PMP
$$
L_x u_\eps(t_\eps,x_\eps)\geq 0.
$$
Since $t_\eps$ is the maximal point for $u_\eps(\cdot, x_\eps)$ and $t_\eps>\tau$,
we have
$$
\prt_t u_\eps(t_\eps,x_\eps)\leq 0,
$$
 where the  sign `$<$'  may appear only  if $t_\eps=T$. Then
\be\label{PMP}
(\prt_t-L_x) u_\eps(t,x)|_{(t,x)=(t_\eps,x_\eps)}\leq 0.
\ee
On the other hand, we  by the assertion (ii) from  Definition \ref{dah}
$$
(\prt_t-L_x) u_\eps(t,x)|_{(t,x)=(t_\eps,x_\eps)}\geq \theta + \inf_{x\in [-R,R], t\in [\tau, T]}(\prt_t-L_x) h_\eps(t,x) \to \theta>0, \quad \eps\to 0.
$$
This gives contradiction and shows that \eqref{ass} fails.
\end{proof}

Now the semigroup properties for the family $\{P_t, t\geq0\}$ can be derived in a standard way.

\begin{cor} \begin{enumerate}
              \item Each operator $P_t, t\geq 0$ is positivity preserving: for any $f\geq 0$ one has  $P_t f \geq 0 $.
              \item The family $\{P_t\}$ is a semigroup: \be\label{semigr}
P_{t+s}f=P_tP_sf, \quad f\in C_\infty, \quad s,t\geq 0.
\ee
              \item For any $f\in C_0^2(\Re)$,
             \begin{equation}\label{Dy1}
P_tf(x)-f(x)=\int_0^tP_sLf(x)\, ds,  \quad t\geq 0.
\end{equation}
            \end{enumerate}
\end{cor}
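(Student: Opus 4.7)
The plan is to derive all three assertions from the Positive Maximum Principle for approximate harmonic functions (Proposition~\ref{p_PMP}), using the candidate family $\{P_{t,\eps}f\}$ of Lemma~\ref{l3} to witness that the relevant functions are approximate harmonic for $\prt_t-L$ in the sense of Definition~\ref{dah}.

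For part~1 (positivity), given $f\in C_\infty$ with $f\ge 0$, the function $h(t,x)=P_tf(x)$ extended by $h(0,x)=f(x)$ is continuous on $[0,\infty)\times\Re$ by Lemma~\ref{l1} together with \eqref{L_delta_eps}, and is approximate harmonic with family $h_\eps=P_{t,\eps}f$ directly by Lemma~\ref{l3}. Since $h(0,\cdot)\ge 0$, Proposition~\ref{p_PMP} yields $P_tf\ge 0$. For part~2 (semigroup), fix $s>0$ and $f\in C_\infty$, note that $P_sf\in C_\infty$ by Lemma~\ref{l1}, and consider
\[
H(t,x)=P_{t+s}f(x)-P_t(P_sf)(x),\qquad H_\eps(t,x)=P_{t+s,\eps}f(x)-P_{t,\eps}(P_sf)(x).
\]
Since $t+s$ stays bounded away from $0$ for $t\in[0,T]$, the convergence $H_\eps\to H$ uniformly on $[0,T]\times\Re$ and the uniform decay at infinity follow from \eqref{conv_pte} and \eqref{conv_x_eps} applied to each term separately, while $(\prt_t-L_x)H_\eps=Q_{t+s,\eps}f-Q_{t,\eps}(P_sf)\to 0$ uniformly on $[\tau,T]\times[-R,R]$ by \eqref{conv_loc}. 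Since $H(0,\cdot)=0$, applying Proposition~\ref{p_PMP} to $\pm H$ gives $H\equiv 0$.

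For part~3 (Dynkin's formula), I first observe that $Lf\in C_\infty$ for any $f\in C_0^2$: the local part $b(x)f'(x)$ has compact support, while for $|x|$ outside $\supp f$ the non-local integral reduces to $\int f(x+u)\mu(x;du)$, whose absolute value is bounded by $\|f\|_\infty\mu(x;\{|u|\ge|x|-M\})$ with $M=\sup\{|y|:y\in\supp f\}$; this tends to $0$ uniformly in $x$ by \eqref{cond_ups_large} combined with the $\alpha$-stable tail bound $\mu^{(\alpha)}(x;\{|u|\ge R\})\le CR^{-\alpha}$. Put
\[
D(t,x)=P_tf(x)-f(x)-\int_0^t P_sLf(x)\,ds,\quad D_\eps(t,x)=P_{t,\eps}f(x)-f(x)-\int_0^t P_{s,\eps}Lf(x)\,ds.
\]
Applying statement~4 of Lemma~\ref{l3} with $Lf\in C_\infty$ gives $\prt_sP_{s,\eps}Lf=L_xP_{s,\eps}Lf+Q_{s,\eps}Lf$; integrating over $[0,t]$ yields
\[
\int_0^t L_xP_{s,\eps}Lf\,ds=P_{t,\eps}Lf-P_{0,\eps}Lf-\int_0^t Q_{s,\eps}Lf\,ds.
\]
Substituting this into $(\prt_t-L_x)D_\eps$ produces a clean cancellation of the two $P_{t,\eps}Lf$ terms and leaves
\[
(\prt_t-L_x)D_\eps=Q_{t,\eps}f+\big(Lf-P_{0,\eps}Lf\big)-\int_0^t Q_{s,\eps}Lf\,ds,
\]
which tends to $0$ uniformly on $[\tau,T]\times[-R,R]$ by \eqref{conv_loc}, \eqref{L_delta_eps} and \eqref{conv_int}. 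Uniform convergence $D_\eps\to D$ on $[0,T]\times\Re$ follows from \eqref{conv_pte}, \eqref{L_delta_eps}, plus the uniform bound $\|p_{s,\eps}(x,\cdot)\|_{L^1}\le 1+Cs^\delta$ (an easy consequence of \eqref{pe} and \eqref{H_bound_Psi}) which dominates the integrand; the uniform tail decay $D_\eps(t,x)\to 0$ as $|x|\to\infty$ comes from \eqref{conv_x_eps}. Since $D(0,\cdot)\equiv 0$, Proposition~\ref{p_PMP} applied to $\pm D$ closes the argument.

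The main technical obstacle is verifying that $D_\eps$ genuinely satisfies the regularity hypothesis of Definition~\ref{dah}, namely $D_\eps(t,\cdot)\in C^2_\infty$ for each $t$: this needs interchanging $L_x$ with $\int_0^t\cdot\,ds$, uniform-in-$s$ boundedness of the $x$-derivatives of $P_{s,\eps}Lf$, and uniform-in-$s$ decay of $L_xP_{s,\eps}Lf(x)$ as $|x|\to\infty$. For each fixed $\eps>0$ these properties can be extracted from the pointwise bounds on $p_t^0$ of Section~\ref{s42} (with $\eps$ playing the role of a lower cut-off that removes the $t\to 0$ singularity) together with Fubini, but the verification is the most laborious part of the write-up.
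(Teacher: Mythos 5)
Your proof is correct and follows the paper's own strategy nearly verbatim: for each of the three assertions you apply Proposition~\ref{p_PMP} to the appropriate function with the approximating family built from the operators $P_{t,\eps}$ of \eqref{Pte}, exactly as the paper does. Two small differences are worth pointing out, both in your favour. First, in part~3 your careful bookkeeping produces the extra term $Lf-P_{0,\eps}Lf$ in $(\prt_t-L_x)D_\eps$; the paper's displayed computation silently identifies $P_{0,\eps}Lf$ with $Lf$, which is false for fixed $\eps>0$, but the omission is harmless since you correctly observe that this term tends to $0$ uniformly by \eqref{L_delta_eps}. Second, you explicitly verify that $Lf\in C_\infty$ when $f\in C_0^2$ (using the compact support of $f$ together with the $\alpha$-stable tail $\mu^{(\alpha)}(x;\{|u|>R\})\le CR^{-\alpha}$ and the residual tail condition \eqref{cond_ups_large}), which is implicitly needed for Lemma~\ref{l3} to apply to $Lf$ but is not spelled out in the paper; for completeness you should also invoke $\mathbf{H}^{cont}$ for the continuity of $Lf$, as the paper does in the remark just after introducing the PMP. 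The "technical obstacle" you flag at the end (checking $D_\eps(t,\cdot)\in C^2_\infty$) is indeed the content that the paper dispatches with the phrase "the same continuity/domination argument which was used in Lemma~\ref{l1}"; your description of how to handle it is consistent with that.
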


\begin{proof} Statement 1 follows from Proposition \ref{p_PMP} applied to $h(t,x)=h^f(t,x)$, which is already known to be approximate harmonic.  To prove statement 2, we fix $s\geq 0,$ $f\in C_\infty$ and apply  Proposition \ref{p_PMP} to functions
$$
h^{\pm}(t,x)=\pm
P_{t+s}f(x)\mp P_tP_sf(x)=\pm h^f(t,x)\mp h^{P_sf}(t,x),
$$
which are  approximate harmonic and satisfy $h^{\pm}(0, \cdot)=0$. Finally, to prove statement 3 we apply Proposition \ref{p_PMP}
to the function
$$
h(t,x)=P_tf(x)-f(x)-\int_0^tP_sLf(x)\, ds,
$$
with the approximating family defined by  $$
h_\eps(t,x)=P_{t, \eps}f(x)-f(x)-\int_0^tP_{s,\eps}Lf(x)\, ds.
$$
Note that  $h_\eps(t,x)$ satisfies assertion (i) from Definition \ref{dah}  by Lemma \ref{l3}, and
 $$\ba
 (\prt_t-L_x)h_\eps(t,x)&=Q_{t,\eps}f(x)-\Big(0-Lf(x)\Big)-\left(P_{t, \eps}Lf(x)-\int_0^tL\Big(P_{s, \eps} Lf\Big)(x)\, ds\right)
 \\&=Q_{t,\eps}f(x)-\int_0^t\prt_s\Big(P_{s, \eps} Lf\Big)(x)+\int_0^tL\Big(P_{s, \eps} Lf\Big)(x)\, ds
 \\&=Q_{t,\eps}f(x)-\int_0^tQ_{s,\eps}Lf(x)\, ds.
 \ea
 $$
Applying \eqref{conv_loc} and \eqref{conv_int}, we get  assertion (ii) from Definition \ref{dah}.
\end{proof}

It is easy to deduce from \eqref{Dy1} that
$$
\int_\Re p_t(x,y)\, dy=1, \quad t>0, \quad x\in \Re.
$$
Indeed, take $f\in C^0_\infty$ such that $f(x)=1, |x|\leq 1$, and put $f_k(x)=f(k^{-1}x)$. Then
$$
f_k(x)\to 1, \quad Lf_k(x)\to 0, \quad k\to \infty
$$
for every $x$, and $\|Lf_k\|\leq C$. Using \eqref{sol}, \eqref{p_0_kernel},  and \eqref{r_bound} we can apply the dominated convergence theorem and prove
$$
\int_0^tP_sLf_k(x)\, ds\to 0, \quad k\to \infty,
$$
which combined with \eqref{Dy1} gives the required identity.

Summarizing all the above, we conclude that $P_t, t\geq 0$ is  a \emph{strongly continuous semigroup} in $C_\infty$, which is \emph{positivity preserving} and \emph{conservative}; that is, this semigroup is \emph{Feller}. It follows from \eqref{Dy1} that
$C_0^2$ belongs to the  domain of its generator, and the restriction of this generator to $C_0^2$ equals $L$.
 For any probability measure $\pi$ on $\Re$ there exists a Markov process $\{X_t\}$ with the transition semigroup $\{P_t\}$, c\`adl\`ag trajectories, and the initial distribution $\mathrm{Law}\,(X_0)=\pi$; see  \cite[Theorem 4.2.7]{EK86}.  Finally, by Lemma \ref{l1}  the process $X$ is strong Feller.

\subsection{The martingale problem: uniqueness}\label{s47}
Note that any Feller process $Y$, whose generator $A$ restricted to $C_0^2$ coincides with $L$, is a $D(\Re^+)$-solution to the martingale problem $(L, C_0^2)$; this is essentially the \emph{Dynkin formula} combined with \cite[Theorem 4.2.7]{EK86}. In particular, this is the case for the Markov process $X$, constructed in the previous section. In this section, we prove that the  $D(\ax)$-solution to the martingale problem $(L, C_0^2)$ with a given initial distribution $\pi$ is unique; this will complete the proof of Theorem \ref{mainthm1}.  The argument here is principally the same as in \cite{Ku18}, with the one important addition which appears because the drift term now is not necessarily bounded.

By  \cite[Corollary 4.4.3]{EK86}, the required uniqueness holds true if for any two $D(\ax)$-solutions to $(L, C_0^2)$ with the same initial distribution $\pi$ corresponding one-dimensional distributions coincide. In what follows, we fix \emph{some} solution $Y$ and prove that
\be\label{onedim}
\E f(Y_T)=\int_{\Re}P_Tf(x)\pi(dx), \quad f\in C_\infty, \quad T>0.
\ee

It is easy to prove that $Y_t, t\geq 0$ is stochastically continuous; see \cite{KK15}. Then for any function $h(t, x)$ which is differentiable w.r.t. $t$, belongs to $C^2_0$ w.r.t. $x$, and has continuous and bounded $\prt_t h(t, x), L_xh(t,x)$,  the process
$$
h(t, Y_t)-\int_0^t\Big( \prt_sh(s, Y_s)+L_xh(s, Y_s)\Big)\, ds
$$
is a martingale,  see  \cite[Lemma 4.3.4~(a)]{EK86}. We use this fact for a certain family of functions which approximate
$$
h^{T,f}(t,x)=P_{T-t}f(x), \quad t\in [0,T],\quad  x\in \Re;
$$
here and below $f\in C_\infty, T>0$ are fixed. Consider a family of functions $\{\phi_R, R>0\}\subset C^2$ such that $\|\phi_R\|_{C^2}\leq C$ and
$$
\phi_R(x)=\left\{
            \begin{array}{ll}
              1, & |x|\leq R+1; \\
              0, & |x|\geq R+2.
            \end{array}
          \right.
$$
Define
$$
h^{T,f}_{R, \eps}(t,x)=\phi_R(x)P_{T-t, \eps}f(x),\quad R>0, \quad  \eps>0.
$$
Recall that $P_{T-t, \eps}f(x)\in C^2,$ and is bounded together with its derivatives uniformly for $t\in [0, T_1], |x|\leq R$ for any $T_1<T, R>0$.  Multiplying this function by $\phi_R$, we get a function from the class $C_0^2$. That is, we have that
$$
M^{T,f}_{R, \eps}(t)=h^{T,f}_{R, \eps}(t, Y_t)-\int_0^t\Big( \prt_sh^{T,f}_{R, \eps}(s, Y_s)+L_xh^{T,f}_{R, \eps}(s, Y_s)\Big)\, ds, \quad t\in [0, T_1]
$$
is a martingale. Denote  $h^{T,f}_{\eps}(t,x)=P_{T-t, \eps}f(x)$. It is clear that
$$
\prt_th^{T,f}_{R, \eps}(t, x)=\phi_R(x)h^{T,f}_{\eps}(t,x).
$$
In addition, we have
$$
L_x h^{T,f}_{R, \eps}(t, x)=\phi_R(x)L_xh^{T,f}_{\eps}(t,x)+\int_{|u|\geq 1}\Big(\phi_R(x+u)-\phi_R(x)\Big)h^{T,f}_{\eps}(t,x+u)\mu(x,du), \quad |x|\leq R.
$$
Thus for $|x|\leq R$ we can write
$$
\prt_th^{T,f}_{R, \eps}(t, x)+L_xh^{T,f}_{R, \eps}(t, x)=-\phi_R(x)Q_{T-s, \eps}f(x)+\Theta^{T,f}_{R, \eps}(t,x),
$$
where $Q_{t,\eps}f$ is defined in  Lemma \ref{l3}, and
$$
\Theta^{T,f}_{R, \eps}(t,x)=\int_{|u|\geq 1}\Big(\phi_R(x+u)-\phi_R(x)\Big)h^{T,f}_{\eps}(t,x+u)\mu(x,du)
$$
Observe that, for $|x|\leq R$,
$$
\Big(\phi_R(x+u)-\phi_R(x)\Big)\not=0\Rightarrow|x+u|\geq R+1,
$$
which yields
$$
|\Theta^{T,f}_{R, \eps}(t,x)|\leq C\sup_{t\in [0, T], |y|\geq R+1}| h^{T,f}_{\eps}(t,y)|=:F^{T,f}_{R, \eps}.
$$

Now we can finalize the proof. Without loss of generality, we assume that the initial distribution $\pi$ has a compact support, and take   $R$ large enough, so that $\mathrm{supp}\,\pi\subset(-R,R)$. Denote
$$
\tau^R=\inf\{t:|Y_t|\geq R\}>0,
$$
then for any $T_1<T$ we have
$$\ba
|\E h^{T,f}_{R,\eps}(T_1\wedge\tau_R,Y_{T_1\wedge\tau_R})&-\E h^{T,f}_{R,\eps}(0,Y_0)|
\\&\leq \E\int_0^{T_1\wedge\tau_R}|Q_{T-s, \eps}f(Y_s)|\, ds+TF^{T,f}_{R, \eps}.
\ea
$$
Using Lemma \ref{l3}, we pass to the limit as $\eps\to 0$ and get
$$
|\E h^{T,f}(T_1\wedge\tau_R,Y_{T_1\wedge\tau_R})-\E h^{T,f}(0,Y_0)|
\leq CT\sup_{t\in [0, T], |y|\geq R+1}| h^{T,f}(t,y)|.
$$
Taking $R\to \infty$ and using Lemma \ref{l1}, we get by the domination convergence theorem
$$
E h^{T,f}(T_1,Y_{T_1})=\E h^{T,f}(0,Y_0), \quad T_1<T.
$$
Taking $T_1\to T$ and using the domination convergence theorem again, we get
$$
\E f(Y_T)=\E P_Tf(Y_0)=\int_{\Re}P_Tf(x)\pi(dx),
$$
which proves \eqref{onedim}.

\section{Proof of Theorem \ref{mainthm2}}\label{s6}

\emph{Statement I} follows straightforwardly from \eqref{r_bound} and \eqref{R_tilde_bound}. To prove \emph{statement II,} we further re-arrange decomposition \eqref{Phi_decomp}. Namely, we write
\be\label{Phi_decomp2}
\Phi_t(x,y)=\Phi_t^{kernel}(x,y)+\Phi_t^{integral}(x,y),
\ee
where
$$
\Phi_t^{integral}(x,y)=\Phi_t^{\nu, large,+}=\int_{|u|> t^{1/\alpha}}p_t^0(x+u,y)\nu(x;du),
$$
and the term $\Phi_t^{kernel}(x,y)=\Phi_t(x,y)-\Phi_t^{integral}(x,y)$ admits a point-wise (`kernel') bound
$$
|\Phi_t^{kernel}(x,y)|\leq Ct^{-1+\delta} G^{(\alpha, \alpha-\zeta,\alpha)}_t(x,\kappa_t(y)).
$$
Since the kernel  $G^{(\alpha, \alpha-\zeta,\alpha)}_t(u,v)$ is bounded by $Ct^{-1/\alpha}$, satisfies \eqref{bint}, and is symmetric, one has
\be\label{kernel}
|\Phi_t^{kernel}(x,y)|\leq Ct^{-1/\alpha-1+\delta}, \quad \sup_y\int_{\Re} |\Phi_t^{kernel}(x,y)|\, dx\leq C t^{-1+\delta}.
\ee
Next, it is straightforward to see that $\Phi_t^{integral}(x,y)$ satisfies the similar $\sup$-bound: since $p_t^0(x,y)$  is bounded by $Ct^{-1/\alpha}$, we have by \eqref{cond_ups_small}, \eqref{cond_ups_large}
\be\label{integral}
|\Phi_t^{integral}(x,y)|\leq C t^{-1/\alpha}\int_{|u|> t^{1/\alpha}}|\nu|(x;du)\leq Ct^{-1/\alpha-1+\delta_\beta}.
\ee
To obtain an integral bound for $\Phi_t^{integral}(x,y)$, we recall that
$$
p_t^0(x+u,y)\leq {C\over t^{1/\alpha}}G^{(\alpha)}\left({\kappa_t(y)-x-u\over t^{1/\alpha}}\right),
$$
and observe that
$$
G^{(\alpha)}(x)\leq C\left(\1_{[-1,1]}+ G^{(\alpha)}\ast\1_{[-1,1]}\right)(x).
$$
Then by \eqref{invert}
$$\ba
\int_{\Re}& |\Phi_t^{integral}(x,y)|\, dx\leq C t^{-1/\alpha}
\left|\int_\Re \int_{|u|>t^{1/\alpha}} G^{(\alpha)}\left({w-x-u\over t^{1/\alpha}}\right)\nu(x; du)\, dx\right|
\\&
\leq C t^{-1/\alpha}\left|\int_\Re \int_{|u|>t^{1/\alpha}} \1_{[-1,1]}\left({w-x-u\over t^{1/\alpha}}\right)\nu(x; du)\, dx\right|
\\&+C t^{-1/\alpha}\left|\int_\Re \left(\int_\Re \int_{|u|>t^{1/\alpha}} \1_{[-1,1]}\left({w-x-u\over t^{1/\alpha}}-z\right) G^{(\alpha)}(z)\nu(x; du)\, dx\right)\, dz\right|\leq Ct^{-1+\delta_\nu}.
\ea
$$
Combined with \eqref{kernel}, \eqref{integral}, this yields
\be\label{full}
|\Phi_t(x,y)|\leq Ct^{-1/\alpha-1+\delta}, \quad \sup_y\int_{\Re} |\Phi_t(x,y)|\, dx\leq C t^{-1+ \delta_\infty}.
\ee
These bounds can be extended to the kernel $\Psi=\sum_{k\geq 1}\Phi^{\star k}$:
\be\label{full_Psi}
|\Psi_t(x,y)|\leq Ct^{-1/\alpha-1+\delta}, \quad \sup_y\int_{\Re} |\Psi_t(x,y)|\, dx\leq C t^{-1+ \delta_\infty}.
\ee
The second bound follows from the second bound in \eqref{full} literally in the same way with \eqref{H_bound_Psi}. To get the first bound, we slightly modify the argument from Section \ref{s44}. In what follows we use the notation of this section. Let $k\geq 1, \tau_1, \dots, \tau_k\in [0, T]$ be given, and let $j\in \{1, \dots,k\}$ be such that $\tau_j=\max_{i=1, \dots, k}\tau_i$.   Using the first inequality in \eqref{full} with $t=\tau_j$, we get
$$\ba
|\Phi_{\tau_1,\dots, \tau_k}&(x,y)|\leq \int_{\Re^{k-1}}\int_\Re|\Phi_{\tau_1}(x,w_1)\dots \Phi_{\tau_k}(w_{k-1},y)|\, dw_1 \dots dw_{k-1}
\\&\leq C\tau_j^{-1-1/\alpha+\delta}\int_{\Re^{j-1}}|\Phi_{\tau_1}(x,w_1)\dots\Phi_{\tau_{j-1}}(w_{j-2},w_{j-1})| \, dw_1\dots dw_{j-1}
\\&\hspace*{1cm}\times \int_{\Re^{k-j-1}}|\Phi_{\tau_{j+1}}(w_{j},w_{j+1})\dots \Phi_{\tau_{k-1}}(w_{k-2},w_{k-1})|\, dw_{j+1} \dots dw_{k-1}.
\ea
$$
Then, using repeatedly \eqref{Phi_bound} and the second inequality in \eqref{full} we get
$$\ba
|\Phi_{\tau_1,\dots, \tau_k}&(x,y)|\leq \\&\leq C\tau_j^{-1-1/\alpha+\delta}(C\tau_{j-1}^{-1+\delta})\int_{\Re^{j-2}}|\Phi_{\tau_1}(x,w_1)\dots\Phi_{\tau_{j-2}}(w_{j-3},w_{j-2})| \, dw_1\dots dw_{j-2}
\\&\hspace*{1cm}\times (C\tau_{j+1}^{-1+\delta_\infty})\int_{\Re^{k-j-2}}|\Phi_{\tau_{j+2}}(w_{j+1},w_{j+2})\dots \Phi_{\tau_{k-1}}(w_{k-2},w_{k-1})|\, dw_{j+2} \dots dw_{k-1}
\\&\leq \dots\leq C^k\Big(\prod_{i=1}^{j-1}\tau_i^{-1+\delta}\Big)\tau_j^{-1-1/\alpha+\delta}\Big(\prod_{i=j+1}^{k}\tau_i^{-1+\delta_\infty}\Big)\leq \tau_j^{-1-1/\alpha-\delta}\Big(\prod_{i\not=j, i\leq k}\tau_i^{-1+\delta_\infty}\Big).
\ea
$$
Now we take $0\leq s_1\leq \dots \leq s_{k-1}\leq t$ and put $s_0=0, s_k=t$, $\tau_i=s_i-s_{i-1}, i=1, \dots, k$. Then the maximal value $\tau_j$ is $\geq t/k$, and we get
$$\ba
|\Phi_{t}^{\star k}(x,y)|&\leq k^{1/\alpha} t^{-1/\alpha}C^k\sum_{j=1}^k\int_{0\leq s_1\leq \dots \leq s_{k-1}\leq t} \Big(\prod_{i\not=j, i\leq k}(s_i-s_{i-1})^{-1+\delta_\infty}\Big)(s_j-s_{j-1})\, ds_1, \dots ds_k
\\&\leq t^{-1/\alpha+\delta+(k-1)\delta_\infty}C^k k^{1/\alpha+1}{\Gamma(\delta_\infty)^{k-1}\Gamma(\delta)\over \Gamma((k-1)\delta_\infty+\delta)}. \ea
$$
Taking the sum in $k\geq 1$, we obtain the first bound in \eqref{full_Psi}.

We also have
$$
p_t^0(x,y)\leq Ct^{-1/\alpha}, \quad \sup_y\int_{\Re} p_t^0(x,y)\, dx\leq C.
$$
Repeating the calculation used in the proof of \eqref{full_Psi}, we get
$$
|r_t(x,y)|=\left(\int_0^{t/2}+\int_{t/2}^t\right) |(p_{t-s}^0\ast \Psi_s)(x,y)|\, ds\leq C 2^{1/\alpha}t^{-1/\alpha}t^{\delta_\infty} + C 2^{1+1/\alpha-\delta}t^{-1/\alpha+\delta}\leq  Ct^{-1/\alpha +\delta_\infty}.
$$
Combined with \eqref{R_tilde_bound}, this completes the proof.

\section{Proof of Theorem \ref{mainthm3}}\label{s7}

We further analyze the bound \eqref{Phi_bound_final} under the stronger assumption $\mathbf{H}^{\nu}$ (ii).   To simplify the notation, we assume $\gamma\leq \alpha$ and write $\gamma$ instead of $\gamma'$. This does not restrict generality because decreasing $\gamma$ in the assumption \eqref{reg_cond} leaves this assumption true. We have
$$\ba
Q_t(x,y)&=t^{\beta/\alpha}|\Phi^{\nu,large,+}_t(x,y)|
\\&\leq Ct^{\beta/\alpha}|\int_{|u|>t^{1/\alpha}}t^{-1/\alpha}G^{(\alpha)}\left({\kappa_t(y)-x-u\over t^{1/\alpha}}\right)\Big(|u|^{-\beta-1}1_{|u|\leq 1}+|u|^{-\gamma-1}1_{|u|>1}\Big)\, du
\\&=C\int_{|u|>t^{1/\alpha}}G_t^{(\alpha,\alpha, \alpha)}(x+u,\kappa_t(y))G_t^{(\alpha, \beta, \gamma)}(0,u)\, du
\\&\leq C\Big(G_t^{(\alpha,\alpha, \alpha)}\ast G_t^{(\alpha, \beta, \gamma)}\Big)(x,\kappa_t(y)),
\ea
$$
in the last inequality we have used that $G_t^{(\alpha,\alpha, \alpha)}(x,y)$ and  $G_t^{(\alpha,\beta, \gamma)}(x,y)$ actually depend on $|x-y|$, only.
Recall that $\beta<\alpha, \gamma\leq \alpha$. Then it is a direct calculation to check that
$$
G_t^{(\alpha,\alpha, \alpha)}(x,y)\leq G_t^{(\alpha,\beta, \gamma)}(x,y),\quad
G_{2t}^{(\alpha,\beta, \gamma)}(x,y)\leq CG_t^{(\alpha,\beta, \gamma)}(x,y).
$$
Then it follows from the sub-convolution property for $G_t^{(\alpha,\beta, \gamma)}(x,y)$ (see Appendix \ref{sA4}) that
$$
Q_t(x,y)\leq C G_t^{(\alpha, \beta, \gamma)}(x,\kappa_t(y)).
$$
That is, by \eqref{Phi_bound_final} we have
$$\ba
|\Phi_t(x,y)|&\leq Ct^{-1+\delta}G^{(\alpha, \alpha,\alpha)}_t(x,\kappa_t(y))
\\&+Ct^{-1+\delta_\zeta}G^{(\alpha, \alpha-\zeta,\alpha)}_t(x,\kappa_t(y))+Ct^{-1+\delta_\beta}G^{(\alpha, \beta,\gamma)}_t(x,\kappa_t(y)).
\ea
$$
Since
$$
t^{-1+\delta_\zeta}G^{(\alpha, \alpha-\zeta,\alpha)}_t(x,y) =\left\{                                 \begin{array}{ll}{t^{-1/\alpha-(\alpha-\zeta)/\alpha}},& |y-x|\leq t^{1/\alpha}\\
                                     |y-x|^{-(\alpha-\zeta)-1}, & t^{1/\alpha}<|y-x|\leq 1,\\
                                 |y-x|^{-\alpha-1}, & |y-x|>1,
                                    \end{array}
                                  \right.
$$
$$
t^{-1+\delta_\beta}G^{(\alpha, \beta, \gamma)}_t(x,y)= \left\{                                 \begin{array}{ll}{t^{-1/\alpha-\beta/\alpha}},& |y-x|\leq t^{1/\alpha}\\
                                    |y-x|^{-\beta-1}, & t^{1/\alpha}<|y-x|\leq 1,\\
                                  |y-x|^{-\gamma-1}, & |y-x|>1,
                                    \end{array}
   \right.
$$
 the sum of these kernels satisfies
$$\ba
t^{-1+\delta_\zeta}G^{(\alpha, \alpha-\zeta,\alpha)}_t(x,y)+t^{-1+\delta_\beta}G^{(\alpha, \beta, \gamma)}_t(x,y)&\leq 2\left\{                                 \begin{array}{ll}{t^{-1/\alpha-\beta'/\alpha}},& |y-x|\leq t^{1/\alpha}\\
                                    |y-x|^{-\beta'-1}, & t^{1/\alpha}<|y-x|\leq 1,\\
                                  |y-x|^{-\gamma-1}, & |y-x|>1,
\end{array}
   \right.
\\&
=2t^{-1+\delta'}G^{(\alpha, \beta', \gamma)}_t(x,y),
\ea
$$
where
$$
\beta'=\max(\beta, \alpha- \zeta), \quad \quad \delta'={\alpha-\beta'\over \alpha}>0
$$
(recall that we have assumed $\gamma\leq \alpha$). This finally gives the bound
\be\label{Phi_bound_sharp}
|\Phi_t(x,y)|\leq Ct^{-1+\delta}H^1_t(x,y)+Ct^{-1+\delta'} H^2_t(x,y)
\ee
with
\be\label{kernels}
H^1_t(x,y)=G^{(\alpha, \alpha,\alpha)}_t(x,\kappa_t(y))={1\over t^{1/\alpha}}G^{(\alpha)}\left(\kappa_t(y)-x\over t^{1/\alpha}\right), \quad H^2_t(x,y)=G^{(\alpha, \beta', \gamma)}_t(x,\kappa_t(y)).
\ee

Denote $\delta_1=\delta, \delta_2=\delta'$. For any $k>1$ we have
$$\ba
|\Phi_t^{\star k}(x,y)|\leq& C^k\sum_{i_1, \dots, i_k\in \{1,2\}}\int_{0<s_1<\dots<s_{k-1}<t}\left(\prod_{j=1}^{k}(s_j-s_{j-1})^{-1+\delta_{i_j}}\right)\times
\\&\hspace{4cm}\times\Big(H^{i_1}_{s_1}\ast\dots\ast H^{i_k}_{t-s_k}\Big)(x,y)\, ds_1, \dots, ds_{k-1}.
\ea
$$
The kernels $H^1, H^2$ satisfy $
H^1_t(x,y)\leq H^2_t(x,y)$ and have the sub-convolution property, see Proposition \ref{pAsub_conv_flow} below.
Then   for $t\in (0,T]$
$$\ba
\int_{0<s_1<\dots<s_{k-1}<t}&\left(\prod_{j=1}^{k}(s_j-s_{j-1})^{-1+\delta_{1}}\right)\Big(H^{1}_{s_1}\ast\dots\ast H^{1}_{t-s_k}\Big)(x,y)\, ds_1, \dots, ds_{k-1}
\\&\leq C^k H^1_t(x,y)\int_{0<s_1<\dots<s_{k-1}<t}\left(\prod_{j=1}^{k}(s_j-s_{j-1})^{-1+\delta_{1}}\right)\, ds_1, \dots, ds_{k-1}
\\&=t^{-1+k\delta_1}{C^k\Gamma(\delta_1)^k\over \Gamma(k\delta_1)} H^1_t(x,y)  \leq \wt C t^{-1+\delta_1}{C^k\Gamma(\delta_1)^k\over \Gamma(k\delta_1)} H^1_t(x,y),
\ea
$$
and   (recall that $\delta_1<\delta_2$)
$$\ba
\int_{0<s_1<\dots<s_{k-1}<t}&\left(\prod_{j=1}^{k}(s_j-s_{j-1})^{-1+\delta_{i_j}}\right)\Big(H^{i_1}_{s_1}\ast\dots\ast H^{i_k}_{t-s_k}\Big)(x,y)\, ds_1, \dots, ds_{k-1}
\\&\leq C^k H^2_t(x,y)\int_{0<s_1<\dots<s_{k-1}<t}\left(\prod_{j=1}^{k}(s_j-s_{j-1})^{-1+\delta_{i_j}}\right)\, ds_1, \dots, ds_{k-1}
\\&=t^{-1+\sum_j\delta_{i_j}}{C^k\prod_j\Gamma(\delta_{i_j})\over \Gamma(\sum_j\delta_{i_j})} H^2_t(x,y)  \leq \wt C t^{-1+\delta_2}{C^k\Gamma(\delta_2)^k\over \Gamma(k\delta_1)} H^2_t(x,y),
\ea
$$
if at least one of the indices $i_1, \dots, i_k$ equals 2. Thus
\be\label{Psi_bound_sharp}
|\Psi_t(x,y)|\leq \sum_{k\geq 1}|\Phi^{\star k}_t(x,y)|\leq Ct^{-1+\delta_1}H_{t}^1(x,y)+Ct^{-1+\delta_2}H_{t}^2(x,y).
\ee
Recall that $\delta_1=\delta, \delta_2=\delta'$ and $$
r_t(x,y)=(p\star \Psi)_t(x,y), \quad p_t^0(x,y)\leq C H_t^1(x,y).
$$
Then, using the sub-convolution properties of $H^1, H^2$ and the inequality $H^1\leq H^2$ in the same way we did before, we get
\be\label{r_bound_sharp}
|r_t(x,y)|\leq C\Delta(t)H^1_t(x,y)+Ct^{\delta'} H^2_t(x,y).
\ee
In the notation from the proof of Proposition \ref{pAsub_conv_flow}, we have
$$
H^1_t(x,y)=F_t^{(\alpha,\alpha,\alpha)}\left({x-\kappa_t(y)\over t^{1/\alpha}}\right), \quad H^2_t(x,y)=F_t^{(\alpha,\beta',\gamma)}\left({x-\kappa_t(y)\over t^{1/\alpha}}\right).
$$
Using \eqref{flowCorr} and \eqref{vagueF}, \eqref{F_mult}, we get
\be\label{r_bound_point}
|r_t(x,y)|\leq C\Delta(t)G^{(\alpha, \alpha, \alpha)}_t(\chi_t(x),y)+Ct^{\delta'}G^{(\alpha, \beta', \gamma')}_t(\chi_t(x),y).
\ee
Combined with \eqref{R_tilde_bound}, this completes the proof.

\appendix

\section{Appendix}\label{sA}
\subsection{Proof of \eqref{b-B}, \eqref{Lip_B}.}\label{sA1}

Denote
$$
N_\beta(\eps):=\left\{
                                                                     \begin{array}{ll}
                                                                               {1\over |1-\beta|}\eps^{1-\beta}, & \beta\in (0,2), \beta\not=1; \\
                                                                               1+\log \eps^{-1}, & \beta=1. \\
                                                                       \end{array}
                                                                           \right.
$$
The proof of the following statement is easy and omitted.

\begin{prop}\label{p2} Let $\upsilon(du)$ be a measure satisfying
$$
\upsilon(|u|>r) \leq C_\upsilon r^{-\beta}, \quad r\in (0, 1]
$$
for some $\beta\in (0,2)$. Then
\be\label{est_1_large}
\int_{\eps<|u|\leq 1} |u|\, \sigma(du) \leq C N_\beta(\eps), \quad \eps\leq 1
\ee
for $\beta\in [1,2)$, and
\be\label{est_1_small}
\int_{|u|\leq \eps} |u|\,\upsilon(du)\leq C N_\beta(\eps), \quad \eps\leq 1.
\ee
for $\beta\in (0,1)$.   In addition, for any $\beta\in (0,2)$
\be\label{est_2}
\int_{|u|\leq \eps} |u|^2\,\upsilon(du)\leq C \eps^{2-\beta}, \quad \eps\leq 1.
\ee
 The constants $C$ in  \eqref{est_1_large}, \eqref{est_1_small},   and \eqref{est_2}   depend on $\beta$ and $C_\upsilon$, only.
\end{prop}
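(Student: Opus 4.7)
The plan is to apply the layer-cake (distribution-function) representation to each of the three integrals and then feed in the given one-sided tail bound $\upsilon(\{|u|>r\})\leq C_\upsilon r^{-\beta}$. No other property of $\upsilon$ is used, so the three statements are variations on a single template; they differ only in the exponents that remain after the final one-dimensional integration, and the constants will automatically depend only on $\beta$ and $C_\upsilon$.

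For \eqref{est_2} I would write $|u|^2 = 2\int_0^{|u|} r\,dr$, apply Fubini-Tonelli to bring the $r$-integral outside, dominate $\upsilon(\{r<|u|\leq \eps\})$ by $\upsilon(\{|u|>r\})\leq C_\upsilon r^{-\beta}$, and integrate $r^{1-\beta}$ over $[0,\eps]$; the result is $\frac{2C_\upsilon}{2-\beta}\eps^{2-\beta}$, valid for every $\beta\in(0,2)$ because the exponent $1-\beta$ is strictly greater than $-1$. The identical argument with $|u|$ in place of $|u|^2$ yields \eqref{est_1_small}: the remaining one-dimensional integral is $\int_0^\eps r^{-\beta}\,dr=\eps^{1-\beta}/(1-\beta)$, which is finite precisely because $\beta<1$, and this matches the non-logarithmic branch of $N_\beta$.

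For \eqref{est_1_large} I would use the same layer-cake identity but over the annulus $\{\eps<|u|\leq 1\}$, which after interchange of integration produces
\[
\int_{\eps<|u|\leq 1}|u|\,\upsilon(du)\leq \eps\,\upsilon(\{|u|>\eps\})+\int_\eps^1 \upsilon(\{|u|>r\})\,dr\leq C_\upsilon\eps^{1-\beta}+C_\upsilon\int_\eps^1 r^{-\beta}\,dr.
\]
The last integral evaluates to $(\eps^{1-\beta}-1)/(\beta-1)$ for $\beta\in(1,2)$ and to $\log\eps^{-1}$ for $\beta=1$. The only step that requires any real attention is this critical case $\beta=1$, where the primitive of $r^{-1}$ is logarithmic rather than a power; the definition of $N_\beta$ is precisely tailored to absorb that dichotomy, so the uniform bound $\leq CN_\beta(\eps)$ with $C=C(\beta,C_\upsilon)$ emerges in all three cases. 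I expect no genuine obstacle beyond this bookkeeping at the borderline exponent.
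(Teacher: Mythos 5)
Your proof is correct, and since the paper explicitly omits the proof of this proposition (calling it easy), the layer-cake/Fubini argument you give is precisely the standard route the authors had in mind. All three cases are handled cleanly, the Fubini interchanges are valid, and the borderline case $\beta=1$ is correctly matched to the logarithmic branch of $N_\beta$.
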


\begin{prop}\label{p1} Let $f$ be such that for some $\sigma\in [0,1]$
\be\label{f_H_lin}
\|f\|_{H_{\sigma,loc}}:=\sup_{x\not=y, |x-y|\leq 1}{|f(x)-f(y)|\over |x-y|^\sigma}<\infty.
\ee
 Then for each $t\in (0,T]$
$$
F_t(x):={1\over 2\sqrt{\pi} t^{1/\alpha}}\int_\Re e^{-z^2t^{-2/\alpha}}f(x-z)d z
$$
satisfies
\be\label{A_f_bound}
\sup_x|f(x)-F_t(x)|\leq C_{\sigma, \alpha, T} t^{\sigma/\alpha}\|f\|_{H_{\sigma,loc}},
\ee
and  $F_t$ is Lipschitz continuous with $\mathrm{Lip}(f_t)\leq C_{\sigma, \alpha, T}t^{\sigma/\alpha-1/\alpha}\|f\|_{H_{\sigma,loc}}$.

\end{prop}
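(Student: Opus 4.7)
The plan is a standard mollification estimate, exploiting the scaling structure of the kernel $K_t(z) := \frac{1}{2\sqrt{\pi}\, t^{1/\alpha}}e^{-z^2 t^{-2/\alpha}}$, which rescales as $K_t(z) = t^{-1/\alpha}K_1(z t^{-1/\alpha})$ for a fixed Schwartz density $K_1$ (so $F_t = K_t\ast f$ is a Gaussian mollification with bandwidth $t^{1/\alpha}$).

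For the pointwise bound I would write
\begin{equation*}
F_t(x) - f(x) = \int_{\Re}K_t(z)\bigl(f(x-z)-f(x)\bigr)\,dz
\end{equation*}
and split the integral at $|z|\leq 1$ versus $|z|>1$. On the former, \eqref{f_H_lin} gives $|f(x-z)-f(x)|\leq \|f\|_{H_{\sigma,loc}}|z|^\sigma$, and the change of variables $u = zt^{-1/\alpha}$ yields $\int_{|z|\leq 1}K_t(z)|z|^\sigma\,dz \leq t^{\sigma/\alpha}\int_\Re K_1(u)|u|^\sigma\,du = Ct^{\sigma/\alpha}$. On $|z|>1$, one iterates \eqref{f_H_lin} along a chain of unit increments to get $|f(x-z)-f(x)|\leq C(1+|z|)\|f\|_{H_{\sigma,loc}}$; combining with the Gaussian moment bounds $\int_{|z|>1}K_t(z)(1+|z|)\,dz \leq C(t^{\sigma/\alpha}+t^{1/\alpha})$ (and using $\sigma\leq 1$ and $t\leq T$) absorbs the extra factor into a constant depending on $T$.

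For the Lipschitz bound, I rewrite $F_t(x) = \int K_t(x-u)f(u)\,du$ and differentiate through the integral (justified by the rapid decay of $K_t'$). Using $\int K_t'(z)\,dz = 0$ to subtract $f(x)$ gives
\begin{equation*}
F_t'(x) = -\int_\Re K_t'(z)\bigl(f(x-z)-f(x)\bigr)\,dz,
\end{equation*}
after which the same splitting argument applies with $|K_t'(z)|$ in place of $K_t(z)$. The scaling $K_t'(z) = t^{-2/\alpha}K_1'(zt^{-1/\alpha})$ costs an extra factor $t^{-1/\alpha}$ compared to the first step, so the resulting bound becomes $\sup_x|F_t'(x)|\leq C_{\sigma,\alpha,T}\,t^{\sigma/\alpha - 1/\alpha}\|f\|_{H_{\sigma,loc}}$, giving the claimed Lipschitz constant.

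The only mildly delicate point is that the hypothesis provides the H\"older seminorm but not $\|f\|_\infty$, so the tails $|z|>1$ cannot be handled by a crude bound on $f$; this is precisely why the telescoping estimate is used, and the scaling exponents then fit because $\sigma\leq 1$ forces $t^{1/\alpha}\leq T^{(1-\sigma)/\alpha}t^{\sigma/\alpha}$ on $(0,T]$. Everything else is a direct change of variables in a Gaussian integral.
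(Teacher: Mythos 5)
Your proof is correct and follows essentially the same route as the paper: the small/large increment dichotomy (H\"older seminorm directly for $|z|\leq 1$, telescoping through unit steps for $|z|>1$) together with Gaussian moment scaling gives \eqref{A_f_bound}, and the centering identity $\int K_t'(z)\,dz=0$ handles the Lipschitz constant with the expected extra factor $t^{-1/\alpha}$. The only cosmetic difference is that the paper packages the two regimes into the single global bound $|f(x)-f(y)|\leq 2(|x-y|^\sigma+|x-y|)\|f\|_{H_{\sigma,loc}}$ and integrates once, rather than splitting the Gaussian integral at $|z|=1$.
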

\begin{proof}   It follows from  \eqref{f_H_lin} that for $|x-y|\geq 1$
$$
|f(x)-f(y)|\leq 2|x-y|\|f\|_{H_{\sigma,loc}}.
$$
This inequality for \emph{large} $|x-y|$, combined with the inequality  \eqref{f_H_lin} for \emph{small} $|x-y|$ yields the following bound valid for all $x,y\in \Re$:
\be\label{f_H_lin1}
|f(x)-f(y)|\leq  2\Big(|x-y|^\sigma\vee|x-y|\Big)\|f\|_{H_{\sigma,loc}}\leq 2  \Big(|x-y|^\sigma+|x-y|\Big)\|f\|_{H_{\sigma,loc}}.
\ee
  Then
$$\ba
|F_t(x)-f(x)|&\leq  {1\over 2\sqrt{\pi} t^{1/\alpha}}\int_\Re e^{-z^2t^{-2/\alpha}}|f(x-z)-f(x)|d z
\\& \leq  {1\over 2\sqrt{\pi} t^{1/\alpha}}\|f\|_{H_{\sigma,loc}}\int_\Re e^{-z^2t^{-2/\alpha}}\Big(C|z|^\sigma+C|z|\Big)d z=\Big(C_1 t^{\sigma/\alpha}+C_2t^{1/\alpha}\Big)\|f\|_{H_{\sigma,loc}},
\ea
$$
which proves \eqref{A_f_bound}. Since
$$\ba
\prt_x F_t(x)={1\over 2\sqrt{\pi} t^{1/\alpha}}\int_\Re e^{-z^2t^{-2/\alpha}}f'(x-z)\,d z&=-{1\over \sqrt{\pi} t^{1/\alpha}}\int_\Re zt^{-2/\alpha}e^{-z^2t^{-2/\alpha}}f(x-z)d z
\\&={1\over \sqrt{\pi} t^{1/\alpha}}\int_\Re zt^{-2/\alpha}e^{-z^2t^{-2/\alpha}}(f(x)-f(x-z))\, d z,
\ea
$$
similar calculation gives
$$
\mathrm{Lip}(F_t)\leq\sup_x|\prt_tf(x)|\leq \Big(C_1 t^{\sigma/\alpha-1}+C_2t^{1/\alpha-1}\Big)\|f\|_{H_{\sigma,loc}}.
$$
\end{proof}
Now we are ready to prove
\eqref{b-B}, \eqref{Lip_B}. We decompose
\be\label{b_decomp}
b_t(x)=\wt b(x)+\widetilde m_t^{(\alpha)}(x)+\widetilde m_t^{\nu}(x)=:f^{1}(x)+f^{2,t}(x)+f^{3,t}(x),
\ee
where we denote
$$
\widetilde m_t^{(\alpha)}(x)=\left\{
                                               \begin{array}{ll}
                                                 -\int_{ t^{1/\alpha}<|u|\leq 1}u\mu^{(\alpha)}(x;du), & \alpha\in [1,2), \\
                                                 \int_{|u|\leq t^{1/\alpha}}u\mu^{(\alpha)}(x;du), & \alpha\in (0,1),
                                               \end{array}
                                             \right.
$$
$$ \widetilde m_t^{\nu}(x)=\left\{
                                               \begin{array}{ll}
                                                 -\int_{ t^{1/\alpha}<|u|\leq 1}u\nu(x;du), & \beta\in [1,2), \\
                                                 \int_{|u|\leq t^{1/\alpha}}u\nu(x;du), & \beta\in (0,1).
                                               \end{array}
                                             \right.
$$
By condition \eqref{b_H_lin}, we have $\|f^1\|_{H_\eta, loc}\leq C$. By condition $\mathbf{H}^{(\alpha)}$(i) and Proposition \ref{p2}, we have
$\|f^{2,t}\|_{H_\zeta, loc}\leq CN_\alpha(t^{1/\alpha})$. Finally, by \eqref{cond_ups_small}, \eqref{cond_ups_large}, and  Proposition \ref{p2}, we have $\|f^{3,t}\|_{H_0, loc}\leq CN_\beta(t^{1/\alpha})$. Applying Proposition \ref{p1}, we get \eqref{b-B}, \eqref{Lip_B}:
$$
|b_t(x)-B_t(x)|\leq C t^{\eta/\alpha}+Ct^{\zeta/\alpha}N_\alpha(t^{1/\alpha})+CN_\beta(t^{1/\alpha})\leq Ct^{\delta},
$$
$$
\mathrm{Lip}\, (B_t)\leq C t^{\eta/\alpha-1}+Ct^{\zeta/\alpha-1}N_\alpha(t^{1/\alpha})+Ct^{-1}N_\beta(t^{1/\alpha})\leq Ct^{-1+\delta}.
$$

Note that the above calculation also gives
\be\label{delta_b}
|b_t(x)- \wt b(x)| \leq CN_{\alpha}(t^{1/\alpha})\leq Ct^{-1+1/\alpha}\Big(1+1_{\alpha=1}\log_+ t^{-1}\Big).
\ee

\subsection{Auxiliary family $\chi_s^t(x)$ and properties of $\chi_t(x)$, $\kappa_t(y)$}

\begin{prop}\label{pCflow} For any $T>0$ there exists $C>1$ such that
\be\label{flowC}
C^{-1}|x|\leq |\chi_t(x)|\leq  C|x|, \quad 0\leq  t\leq T, \quad |x|\geq C.
\ee
\end{prop}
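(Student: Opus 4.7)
The plan is to establish a one-sided linear bi-Lipschitz bound for the flow $\chi_s$ together with a uniform bound on $|\chi_s(0)|$, and then combine the two. The key preliminary point is that, although $b$ is not assumed bounded, the velocity field $B_s$ governing \eqref{chi} has at most linear spatial growth with an integrable (in $s$) intercept. Indeed, by $\mathbf{H}^{drift}$ the compensated drift $\wt b$ is $\eta$-H\"older on unit balls; a standard chaining argument extends this to $|\wt b(x)-\wt b(y)|\leq C(1+|x-y|)$ for all $x,y$, hence $|\wt b(x)|\leq C(1+|x|)$. By \eqref{b_decomp} the correction terms $\widetilde m_t^{(\alpha)}(x)$ and $\widetilde m_t^\nu(x)$ are bounded in $x$ uniformly by $C(N_\alpha(t^{1/\alpha})+N_\beta(t^{1/\alpha}))$ (via Proposition \ref{p2} together with $\mathbf{H}^{(\alpha)}$ and $\mathbf{H}^{\nu}$(i)); both quantities are integrable on $[0,T]$. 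Passing through the Gaussian convolution that defines $B_t$ then yields a bound of the form $|B_t(x)|\leq C_1+C_2|x|+\psi(t)$ with $C_1,C_2$ independent of $t$ and $\psi\in L^1([0,T])$.

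Next I would apply one-dimensional Gronwall to the ODE \eqref{chi}. The linear growth of $B_s$ together with $\psi\in L^1([0,T])$ yields an a priori bound $\sup_{s\leq T}|\chi_s(x)|\leq C'(|x|+1)$; specializing $x=0$ gives $|\chi_s(0)|\leq M$ for all $s\leq T$. For the flow's Lipschitz properties, set $\phi(s)=\chi_s(x)-\chi_s(y)$. The integrability of $\mathrm{Lip}(B_s)\leq Cs^{-1+\delta}$ in \eqref{Lip_B} guarantees uniqueness (forward and backward) for \eqref{chi}, hence $\phi$ cannot vanish away from $s=0$ when $x\neq y$, and in dimension one it keeps its sign. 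Therefore
$$
\bigl||\phi|'(s)\bigr|\leq \mathrm{Lip}(B_s)\,|\phi(s)|
$$
in both directions, which delivers the two-sided exponential estimate
$$
K^{-1}|x-y|\leq |\chi_s(x)-\chi_s(y)|\leq K|x-y|,\qquad K:=\exp\!\Bigl(\int_0^T \mathrm{Lip}(B_s)\,ds\Bigr)<\infty.
$$
Taking $y=0$ and combining with $|\chi_s(0)|\leq M$ produces $K^{-1}|x|-M\leq |\chi_s(x)|\leq K|x|+M$; choosing $C\geq \max(2K,2KM)$ absorbs the residual $M$ into the linear term whenever $|x|\geq C$ and yields $C^{-1}|x|\leq |\chi_s(x)|\leq C|x|$.

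The main obstacle is the first step: because the paper deliberately allows unbounded $b$, one has to extract usable linear growth of $B_t$ from the purely local H\"older assumption \eqref{b_H_lin} and simultaneously verify that the $t$-dependence of the compensation terms is $L^1$-integrable near $s=0$; the non-trivial ingredients here are the estimates on $N_\alpha,N_\beta$ from Proposition \ref{p2} and the mollification bounds from Proposition \ref{p1}. Once these growth bounds are in place, the ODE analysis is routine one-dimensional Gronwall, and the lower Lipschitz estimate comes essentially for free because $\phi$ cannot cross zero in $\Re$.
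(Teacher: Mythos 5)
Your proof is correct, and it shares the crucial first step with the paper's: reading off from $\mathbf{H}^{drift}$ and \eqref{f_H_lin1} that $\wt b$ has at most linear growth, then using \eqref{delta_b}, \eqref{b-B} (via the compensator and mollification bounds of Propositions~\ref{p2} and~\ref{p1}) to get $|B_t(x)|\leq C_1+\psi(t)+C_3|x|$ with $\psi\in L^1([0,T])$. The paper then finishes in one shot: the linear growth bound already controls $\bigl|\tfrac{d}{dt}|\chi_t(x)|\bigr|$ from both sides, giving the two-sided estimate $e^{-C_3}|x|-C_4\leq|\chi_t(x)|\leq e^{C_3}|x|+C_4$ by a forward and a reverse Gronwall argument applied directly to $|\chi_t(x)|$. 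You take a slightly longer route: you obtain only the upper bound from linear growth, get $|\chi_s(0)|\leq M$ as a special case, and then separately prove a bi-Lipschitz estimate for the flow using $\int_0^T\mathrm{Lip}(B_s)\,ds<\infty$ from \eqref{Lip_B} together with the one-dimensional sign-preservation of $\phi(s)=\chi_s(x)-\chi_s(y)$. Both routes are sound. Yours brings in \eqref{Lip_B} (which the paper does not invoke here) and incidentally proves a stronger statement — a genuine two-sided Lipschitz bound for the flow maps — at the cost of an extra uniqueness/sign argument; the paper's version is shorter and stays entirely at the level of $|\chi_t(x)|$, but does not directly yield the bi-Lipschitz property of $\chi_t$.
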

\begin{proof}   By $\mathbf{H}^{drift}$, the function $\wt b$ satisfies \eqref{f_H_lin} with $\sigma=\eta$. Therefore for this function \eqref{f_H_lin1} with $\sigma=\eta$ holds . Then by \eqref{b-B} and \eqref{delta_b} the coefficient $B_{t}$ in the ODE, which defines $\chi_t$, satisfies the following linear growth bound:
$$
|B_{t}(x)|\leq C_1+C_2t^{-1+1/\alpha}\Big(1+1_{\alpha=1}\log_+ t^{-1}\Big)+C_3|x|.
$$
This in a standard way provides
$$
e^{-C_3}|x|-C_4\leq |\chi_t(x)|\leq e^{C_3}|x|+C_4.
$$
\end{proof}

In order to relate the families $\chi_s(x)$, $\kappa_s(y)$, we introduce an auxiliary family $\chi_s^t(x)$, the solution to the Cauchy problem
\be\label{chi_tt}
{d\over ds}\chi_s^t(x)=B_{t-s}(\chi_s^t(x)), \quad s\in [0, t], \quad \chi_0^t(x)=x.
\ee

\begin{prop}\label{pAflow} For any $T>0$ there exists $C$ such that for any $s\leq t\leq T$
\be\label{flowA}
e^{-Ct^\delta}|\kappa_{t}(y)-x|\leq  |\kappa_{t-s}(y)-\chi_s^t(x)|\leq e^{Ct^\delta}|\kappa_{t}(y)-x|.
\ee
\end{prop}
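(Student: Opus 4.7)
The plan is to exploit the time-reversal symmetry relating $\kappa_{t-s}(y)$ and $\chi_s^t(x)$, and then apply Gronwall to the (integrable-in-$s$) Lipschitz constants of $B_{t-s}$ from \eqref{Lip_B}.

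First I would observe that if one sets $\psi_s := \kappa_{t-s}(y)$ for $s\in[0,t]$ then, by the chain rule applied to the defining ODE $\frac{d}{du}\kappa_u(y)=-B_u(\kappa_u(y))$,
\begin{equation*}
\frac{d}{ds}\psi_s \;=\; -\frac{d\kappa_u(y)}{du}\bigg|_{u=t-s} \;=\; B_{t-s}(\kappa_{t-s}(y)) \;=\; B_{t-s}(\psi_s),\qquad \psi_0=\kappa_t(y).
\end{equation*}
Hence $\psi_s$ and $\chi_s^t(x)$ satisfy the same nonautonomous ODE \eqref{chi_tt}, differing only in their initial conditions, $\kappa_t(y)$ and $x$ respectively.

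Next I would set $D_s := \psi_s-\chi_s^t(x) = \kappa_{t-s}(y)-\chi_s^t(x)$. Then
\begin{equation*}
\frac{d}{ds}D_s \;=\; B_{t-s}(\psi_s) - B_{t-s}(\chi_s^t(x)),
\end{equation*}
so, using \eqref{Lip_B},
\begin{equation*}
\left|\frac{d}{ds}D_s\right| \;\leq\; \mathrm{Lip}(B_{t-s})\,|D_s| \;\leq\; C(t-s)^{-1+\delta}\,|D_s|.
\end{equation*}
Since $\delta<1$ the Lipschitz factor is integrable in $s\in[0,t]$, with
\begin{equation*}
\int_0^s C(t-u)^{-1+\delta}\,du \;=\; \tfrac{C}{\delta}\bigl(t^\delta-(t-s)^\delta\bigr) \;\leq\; \tfrac{C}{\delta}t^\delta.
\end{equation*}
Gronwall's inequality (applied to the Lipschitz function $|D_s|$, in both directions) then yields
\begin{equation*}
|D_0|\,\exp\!\left(-\tfrac{C}{\delta}t^\delta\right) \;\leq\; |D_s| \;\leq\; |D_0|\,\exp\!\left(\tfrac{C}{\delta}t^\delta\right),
\end{equation*}
and since $|D_0|=|\kappa_t(y)-x|$ this is exactly \eqref{flowA} (after relabelling the constant $C/\delta$ back to $C$).

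There is really no obstacle: once the time-reversal identification $\psi_s = \kappa_{t-s}(y)$ is noticed, the statement reduces to a standard two-sided Gronwall estimate for a single time-inhomogeneous ODE with two initial conditions, and the integrability of the Lipschitz norm $\mathrm{Lip}(B_{t-s})$ over $[0,t]$ is precisely what \eqref{Lip_B} provides (here one uses $\delta<\delta_\beta<1$, already noted after \eqref{Lip_B}). The only minor point worth stating explicitly in the write-up is that $|D_s|$ is absolutely continuous with $\frac{d}{ds}|D_s| \leq |\frac{d}{ds}D_s|$ a.e., justifying the application of Gronwall to $|D_s|$ rather than to $D_s$ itself.
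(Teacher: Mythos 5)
Your proof is correct and is essentially the paper's argument. The paper writes the difference quotient $q_{t,s}=\bigl(B_{t-s}(x_s)-B_{t-s}(y_s)\bigr)/(x_s-y_s)$ (with $0/0:=1$) to get the exact exponential formula $\chi_s^t(x)-\kappa_{t-s}(y)=(x-\kappa_t(y))\exp\bigl(\int_0^s q_{t,r}\,dr\bigr)$ and then bounds $|q_{t,r}|\le\mathrm{Lip}(B_{t-r})$, which avoids invoking Gronwall (and makes the lower bound immediate); you instead apply a two-sided Gronwall estimate to $|D_s|$, which amounts to the same computation after noting $|\tfrac{d}{ds}\log|D_s||\le\mathrm{Lip}(B_{t-s})$ a.e.\ where $D_s\ne0$ (and $D_s\equiv0$ if $D_0=0$, by uniqueness).
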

\begin{proof} Denote $x_s=\chi^t_s(x), y_s=\kappa_{t-s}(y)$, then
$$
(x_s-y_s)'=(x_s-y_s)q_{t,s}, \quad q_{t,s}={B_{t-s}(x_s)-B_{t-s}(y_s)\over x_s-y_s}
$$
with the convention ${0\over 0}=1$. Then
$$
\chi^t_s(x)-\kappa_{t-s}(y)=x_s-y_s=(x_0-y_0)\exp\left(\int_0^s q_{t,r}\, dr\right)=(x-\kappa_t(y))\exp\left(\int_0^s q_{t,r}\, dr\right),
$$
which provides the required statement by \eqref{Lip_B} since $
|q_{t,r}|\leq \mathrm{Lip}\, (B_{t-r}).
$
\end{proof}

\begin{prop}\label{pBflow} For any $T>0$ and $\delta< \min(\delta_\eta,\delta_\zeta,\delta_\beta)$ there exist $C$ such that for
\be\label{chi-t-chi}
 \chi_s^t(x)=\chi_s(x)+t^{1/\alpha}\int_0^s \upsilon(\chi_r(x))\Big(W_\alpha(t; r)-W_\alpha(t;t-r)\Big)\, dr+Q_{s,t}(x), \quad s\leq t
\ee
with
\be\label{chi-t-chi-error}
|Q_{s,t}(x)|\leq Ct^{1/\alpha+\delta}, \quad s\leq t\leq T.
\ee
\end{prop}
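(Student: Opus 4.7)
The plan is to analyze the discrepancy $\Delta_s := \chi_s^t(x) - \chi_s(x)$ through its integral form, isolate the explicit main term coming from the skewness of the $\alpha$-stable part, and show that everything else is absorbable into $O(t^{1/\alpha+\delta})$. First I would write
\[
\Delta_s = \underbrace{\int_0^s \bigl[B_{t-r}(\chi^t_r(x)) - B_{t-r}(\chi_r(x))\bigr]\, dr}_{=:\, I_1(s)} + \underbrace{\int_0^s \bigl[B_{t-r}(\chi_r(x)) - B_r(\chi_r(x))\bigr]\, dr}_{=:\, I_2(s)}.
\]
The term $I_1$ is a Lipschitz feedback: by \eqref{Lip_B} one has $|I_1(s)| \leq C\int_0^s (t-r)^{-1+\delta}|\Delta_r|\, dr$, and since $\int_0^t (t-r)^{-1+\delta}\, dr \leq C T^\delta$, a standard Gronwall argument yields $\sup_{s\leq t}|\Delta_s| \leq C \sup_{s \leq t}|I_2(s)|$. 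The strategy is then a two-step bootstrap: identify the main term in $I_2$ and show the remainder of $I_2$ is $O(t^{1/\alpha+\delta})$; deduce the crude bound $|\Delta_s| \leq C t^{1/\alpha}$ from Gronwall; and finally plug this crude bound back into $I_1$ to upgrade $|I_1(s)|$ to $O(t^{1/\alpha+\delta})$.

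Next I would expand $I_2$ explicitly. Using \eqref{b-B}, the estimate $|B_u(x) - b_u(x)| \leq C u^{-1+1/\alpha+\delta}$ integrates to at most $C t^{1/\alpha+\delta}$ (both at $u = r$ and at $u = t - r$), so modulo such errors
\[
I_2(s) = \int_0^s \bigl[b_{t-r}(\chi_r(x)) - b_r(\chi_r(x))\bigr]\, dr + O(t^{1/\alpha+\delta}).
\]
By the very definition of $b_u$ one has $b_{t-r}(x) - b_r(x) = m^\mu_r(x) - m^\mu_{t-r}(x)$, which splits along $\mu = \mu^{(\alpha)} + \nu$. For the $\alpha$-stable part the odd factor $u/|u|^{\alpha+1}$ cancels by symmetry and only the skewness component survives, giving
\[
m^{(\alpha)}_r(x) - m^{(\alpha)}_{t-r}(x) = \upsilon(x) \int_{r^{1/\alpha}}^{(t-r)^{1/\alpha}} \rho^{-\alpha}\, d\rho = \upsilon(x)\, t^{1/\alpha}\bigl[W_\alpha(t;r) - W_\alpha(t;t-r)\bigr],
\]
with the last equality immediate from \eqref{W-def} (and valid irrespective of whether $r < t-r$ or $r > t-r$, thanks to the signed convention for integrals). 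Substituting $x = \chi_r(x)$ and integrating, this is precisely the main term of \eqref{chi-t-chi}.

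It remains to bound the residual piece $m^\nu_r(\chi_r) - m^\nu_{t-r}(\chi_r) = \int u\nu(\chi_r, du)$ over the annulus between radii $r^{1/\alpha}$ and $(t-r)^{1/\alpha}$. The Blumenthal--Getoor bound \eqref{cond_ups_small} combined with Proposition \ref{p2} applied separately in the regimes $\beta<1$ (via the direct estimate $\int_{|u|\leq \rho}|u||\nu|(du)\leq C\rho^{1-\beta}$) and $\beta\geq 1$ (via the complementary bound $|\nu|(\{|u|>\rho\})\leq C\rho^{-\beta}$) shows that $|m^\nu_r - m^\nu_{t-r}|$ is dominated, up to logarithmic factors when $\beta = 1$, by a quantity whose integral over $r\in[0,s]$ does not exceed $C t^{1/\alpha + \delta_\beta} \leq Ct^{1/\alpha+\delta}$ since $\delta \leq \delta_\beta$. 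Assembling everything, $I_2(s) = M(s,t,x) + O(t^{1/\alpha+\delta})$, where $M$ denotes the main term of \eqref{chi-t-chi}. Since $\int_0^t W_\alpha(t;s)\, ds = 1$ by \eqref{W_prob} and $\upsilon$ is bounded, $|M(s,t,x)| \leq 2\|\upsilon\|_\infty t^{1/\alpha}$, so the Gronwall bound from the first paragraph yields the crude estimate $|\Delta_s| \leq C t^{1/\alpha}$. Feeding this back into $I_1$ gives $|I_1(s)| \leq C t^{1/\alpha}\int_0^s(t-r)^{-1+\delta}\, dr \leq C t^{1/\alpha+\delta}$, and combining all the error terms produces the bound on $Q_{s,t}(x)$. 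The main obstacle is the careful treatment of the $\nu$-correction across the three regimes of $\beta$ relative to $1$, together with the bootstrap bookkeeping that ensures the Lipschitz feedback $I_1$ is ultimately absorbed into the remainder and not into the main term.
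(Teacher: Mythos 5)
Your proof is correct and follows essentially the same route as the paper. The only cosmetic difference is in how the Lipschitz feedback is closed: the paper writes the linear ODE for $\chi_s^t(x)-\chi_s(x)$ and solves it exactly by variation of constants (so that the $\exp(\int_r^s\tilde q_{t,w}\,dw)$ factor is controlled directly via \eqref{Lip_B} without any iteration), whereas you use a singular-kernel Gronwall bound to get a crude $O(t^{1/\alpha})$ estimate and then bootstrap through $I_1$ to upgrade it to $O(t^{1/\alpha+\delta})$; the key steps — replacing $B$ by $b$ via \eqref{b-B}, the exact cancellation $m^{(\alpha)}_r - m^{(\alpha)}_{t-r}=\upsilon\, t^{1/\alpha}[W_\alpha(t;r)-W_\alpha(t;t-r)]$, and the case analysis for the $\nu$-residual via Proposition \ref{p2} — are identical.
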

\begin{proof}
Denote $x_s=\chi^t_s(x), \widetilde x_s=\chi_s(x)$, then
$$
(x_s-\wt x_s)'=(x_s-\wt x_s)\wt q_{t,s}+\wt Q_{t,s}, \quad \wt q_{t,s}={B_{t-s}(x_s)-B_{t-s}(\wt x_s)\over x_s-\wt x_s}, \quad \wt Q_{t,s}=B_{t-s}(\wt x_s)-B_{s}(\wt x_s),
$$
and thus
\be\label{1}
x_s-\wt x_s=\int_0^s \wt Q_{t,r} \exp\left(\int_r^s \wt q_{t,w}\, dw\right)\, dr.
\ee
By  \eqref{Lip_B}
\be\label{2}
\left|\exp\left(\int_r^s \wt q_{t,w}\, dw\right)-1\right|\leq Ct^\delta.
\ee
On the other hand, by \eqref{b-B}
$$
|\wt Q_{t,r}- (b_{t-r}(\wt x_r)-b_{r}(\wt x_r))|\leq C\Big((t-r)^{-1+1/\alpha+\delta}+r^{-1+1/\alpha+\delta}\Big),
$$
 and by \eqref{m_tb_t}
$$
b_{t-r}(x)-b_{r}(x)=m_r^\mu(x)-m^\mu_{t-r}(x)=
\Big( m_{r}^{(\alpha)}(x)-m_{t-r}^{(\alpha)}(x)\Big)+\Big(m_{r}^{\nu}(x)-m_{t-r}^{\nu}(x)\Big),
$$
where we denote
$$
m_r^{(\alpha)}(x)=\int_{r^{1/\alpha<|u|\leq 1}}u\mu^{(\alpha)}(x,du),\quad  m_r^{\nu}(x)=\int_{r^{1/\alpha<|u|\leq 1}}u\nu(x,du).
$$
Assume for a while that $r\leq t-r$. By Proposition \ref{p2},
$$
|m_{r}^{\nu}(x)-m_{t-r}^{\nu}(x)|\leq \int_{r^{1/\alpha}<|u|\leq (t-r)^{1/\alpha}}|u| \,|\nu|(x,du)\leq \left\{
                                                                                                          \begin{array}{ll}
                                                                                                            CN_\beta((t-r)^{1/\alpha}), & \beta\in (0,1); \\
                                                                                                           CN_\beta(r^{1/\alpha}), &  \beta\in [1, 2).
                                                                                                          \end{array}
                                                                                                        \right.
$$
  Similarly, for $t-r\leq r\leq t$
$$
|m_{r}^{\nu}(x)-m_{t-r}^{\nu}(x)|\leq \int_{(t-r)^{1/\alpha}<|u|\leq r^{1/\alpha} }|u| \,|\nu|(x,du)\leq \left\{
                                                                                                          \begin{array}{ll}
                                                                                                            CN_\beta(r^{1/\alpha}), & \beta\in (0,1); \\
                                                                                                           CN_\beta((t-r)^{1/\alpha}), &  \beta\in [1, 2).
                                                                                                          \end{array}
                                                                                                        \right.
$$
That is, in any case we have
$$
|m_{r}^{\nu}(x)-m_{t-r}^{\nu}(x)|\leq CN_\beta(r^{1/\alpha})+CN_\beta((t-r)^{1/\alpha})\leq C \Big((t-r)^{-1+1/\alpha+\delta}+r^{-1+1/\alpha+\delta}\Big).
$$
On the other hand, for $r\leq t$ we have
$$\ba
m_r^{(\alpha)}(x)&=m_t^{(\alpha)}(x)+\int_{r^{1/\alpha}<|u|\leq t^{1/\alpha}}u\mu^{(\alpha)}(x,du)
\\&=
m_t^{(\alpha)}(x)+\upsilon(x)\int_{r^{1/\alpha}}^{t^{1/\alpha}}{dw\over w^\alpha}=m_t^{(\alpha)}(x)+\upsilon(x)t^{1/\alpha}W(t;r).
\ea
$$
Summarizing these calculations we get
$$
\left|\wt Q_{t,r}- \upsilon(\wt x_r)t^{1/\alpha}\Big(W_\alpha(t;r)-W_\alpha(t;t-r)\Big)\right|\leq C\Big((t-r)^{-1+1/\alpha+\delta}+r^{-1+1/\alpha+\delta}\Big).
$$
This bound combined with  \eqref{1} and \eqref{2}, provides  \eqref{chi-t-chi} and \eqref{chi-t-chi-error}.
\end{proof}

Recall that $\upsilon(\cdot)$ is bounded and $W_\alpha(t;\cdot)$ is a probability density. That is, directly from \eqref{chi-t-chi}, \eqref{chi-t-chi-error} we get the bound
\be\label{chi-t-chi-error3}
|\chi_s^t(x)-\chi_s(x)|\leq Ct^{1/\alpha}, \quad s\leq t\leq T.
\ee
Combined with Proposition \ref{pAflow}, this gives the following.

\begin{cor}\label{corA} For each $T>0$, there exists $C$ such that for any $s\leq t\leq T$
\be\label{flowCorr}
\ba
 e^{-Ct^\delta}|\kappa_{t}(y)-x|-Ct^{1/\alpha}\leq  |\kappa_{t-s}(y)-\chi_s(x)|
\leq e^{Ct^\delta}|\kappa_{t}(y)-x|+Ct^{1/\alpha}, \quad s\leq t\leq T.
\ea
 \ee
\end{cor}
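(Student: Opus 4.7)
\textbf{Proof proposal for Corollary \ref{corA}.} The plan is to deduce the inequality by a single triangle inequality step, using Proposition \ref{pAflow} to control the auxiliary quantity $|\kappa_{t-s}(y)-\chi_s^t(x)|$ and \eqref{chi-t-chi-error3} to control the discrepancy between $\chi_s^t(x)$ and $\chi_s(x)$. Since these two ingredients are already established, there is no genuine obstacle; the corollary is essentially a bookkeeping step that transfers the bound from the auxiliary flow $\chi_s^t$ (for which the exact multiplicative contraction/expansion estimate holds) to the original flow $\chi_s$ (which is what actually appears in the main theorems).

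First, I would write the two-sided triangle inequality
\[
|\kappa_{t-s}(y)-\chi_s^t(x)|-|\chi_s^t(x)-\chi_s(x)|\,\leq\,|\kappa_{t-s}(y)-\chi_s(x)|\,\leq\,|\kappa_{t-s}(y)-\chi_s^t(x)|+|\chi_s^t(x)-\chi_s(x)|.
\]
Next, I would insert the upper bound $|\kappa_{t-s}(y)-\chi_s^t(x)|\leq e^{Ct^\delta}|\kappa_t(y)-x|$ from Proposition \ref{pAflow} into the right-hand side, and the lower bound $|\kappa_{t-s}(y)-\chi_s^t(x)|\geq e^{-Ct^\delta}|\kappa_t(y)-x|$ into the left-hand side. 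Finally, I would use \eqref{chi-t-chi-error3} to replace $|\chi_s^t(x)-\chi_s(x)|$ by $Ct^{1/\alpha}$ on both sides. Adjusting the (generic) constant $C$ so as to cover both the multiplicative factor and the additive $t^{1/\alpha}$ term in one symbol yields precisely \eqref{flowCorr}.

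The main (very minor) thing to check is that the constants from Proposition \ref{pAflow} and from \eqref{chi-t-chi-error3} can be merged into a single $C$ valid uniformly for $s\le t\le T$, which is automatic since both bounds are stated uniformly on $[0,T]$. No further argument is needed.
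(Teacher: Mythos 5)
Your proposal is correct and follows exactly the route the paper intends: the paper's entire ``proof'' of Corollary \ref{corA} is the single sentence ``Combined with Proposition \ref{pAflow}, this gives the following,'' and the triangle-inequality bookkeeping you spell out is precisely what that sentence abbreviates. No gap, and no meaningful difference in approach.
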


\subsection{Stable densities}\label{sA3}

The kernel $G^{(\alpha)}(x)$ (see the definition before Corollary \ref{cor_p}) possess the following   properties which can be  verified straightforwardly:
\be\label{G_comp}
G^{(\alpha)}(x)\leq G^{(\beta)}(x),\quad 0<\beta<\alpha;
\ee
\be\label{G_pol}
(1+|x|)^\beta G^{(\alpha)}(x)\leq C G^{(\alpha-\beta)}(x),\quad 0<\beta<\alpha;
\ee
\be\label{vague}
\sup_{|v|\leq 1}G^{(\alpha)}(x+v)\leq CG^{(\alpha)}(x),
\ee
and for any $c>0$ there exists $C$ such that
\be\label{G_mult}
G^{(\alpha)}(cx)\leq CG^{(\alpha)}(x).
\ee
We also have
\be\label{sub_conv_simple}
\Big(G^{(\alpha)}\ast G^{(\alpha)}\Big)(x)\leq CG^{(\alpha)}(x).
\ee

The following two propositions collect the properties of the $\alpha$-stable densities $g^{(\lambda,\rho,\upsilon)}(x)$, see \eqref{stable_den} for the definition.

\begin{prop}\label{pA4} The density $g^{(\lambda,\rho,\upsilon)}(x)$ is well defined and belongs to the class $C^1$ w.r.t. $(\lambda, \rho),$ and the class $C^2_\infty$ w.r.t. $x$. The following bounds hold true for each $\alpha\in (0,2), 0<\lambda_{\min}\leq \lambda_{\max}, R>0$  uniformly in $\lambda\in [\lambda_{\min}, \lambda_{\max}]$, $\rho\in [-1,1]$, $|\upsilon|\leq R$, $x\in \Re$:
\be\label{g_bound0}
{g}^{(\lambda,\rho,\upsilon)}(x)\leq C G^{(\alpha)}(x),
\ee
\be\label{g_bound_lamb}
|\prt_\lambda{g}^{(\lambda,\rho,\upsilon)}(x)|\leq C G^{(\alpha)}(x),
\ee
\be\label{g_bound_rho}
|\prt_\rho{g}^{(\lambda,\rho,\upsilon)}(x)|\leq C G^{(\alpha)}(x),
\ee
\end{prop}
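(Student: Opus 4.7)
The plan is to exploit the Fourier representation \eqref{stable_den} together with an explicit computation of the characteristic exponent \eqref{stable_psi}, treating the regimes $|x| \leq 1$ and $|x| > 1$ separately.

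\emph{Step 1 (uniform Gaussian-like envelope).} First I would split the L\'evy kernel into its even and odd parts and compute explicitly
\[
\mathrm{Re}\,\Psi_\alpha^{(\lambda,\rho,\upsilon)}(\xi) = -c_\alpha \lambda |\xi|^\alpha, \qquad c_\alpha := \int_{\mathbb{R}}(1-\cos u)|u|^{-\alpha-1}du,
\]
with the $\alpha=1$ logarithmic correction absorbed into the drift; the parameters $\rho$ and $\upsilon$ enter only the imaginary part of $\Psi_\alpha$. Consequently
\[
|e^{\Psi_\alpha^{(\lambda,\rho,\upsilon)}(\xi)}| \leq e^{-c_\alpha \lambda_{\min}|\xi|^\alpha}, \qquad \xi \in \mathbb{R},
\]
uniformly in the admissible parameter range. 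This alone makes the Fourier integral absolutely convergent and licenses differentiation under the integral sign in $x,\lambda,\rho$.

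\emph{Step 2 (smoothness and $L^\infty$-bound).} Each $x$-differentiation brings down a polynomial factor $(-i\xi)^k$, which is killed by $e^{-c|\xi|^\alpha}$, so $g^{(\lambda,\rho,\upsilon)} \in C^\infty$ in $x$; Riemann--Lebesgue then yields vanishing at infinity of $g$ and all its $x$-derivatives, hence $g \in C^2_\infty$ in $x$. A direct inspection shows $|\partial_\lambda \Psi_\alpha(\xi)| + |\partial_\rho \Psi_\alpha(\xi)| \leq C(1+|\xi|^\alpha)$, again integrable against $e^{-c|\xi|^\alpha}$, which justifies differentiating in $\lambda,\rho$ under the integral and gives $C^1$-dependence with the derivative formula
\[
\partial_\lambda g(x) = \frac{1}{2\pi}\int_{\mathbb{R}} e^{-ix\xi}\,\partial_\lambda\Psi_\alpha(\xi)\,e^{\Psi_\alpha(\xi)}\,d\xi,
\]
and similarly for $\partial_\rho$. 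Taking absolute values delivers a uniform bound $|g|+|\partial_\lambda g|+|\partial_\rho g|\leq C$, which already establishes \eqref{g_bound0}--\eqref{g_bound_rho} on $|x|\leq 1$, where $G^{(\alpha)}(x)\geq c>0$.

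\emph{Step 3 (tail bound for $|x| > 1$, the main obstacle).} The difficulty is that the target decay $|x|^{-\alpha-1}$ is a \emph{non-integer} power, so it cannot be produced by pure integration by parts in $\xi$. I would use the classical small-jump/large-jump decomposition: write $\mu^{(\alpha;\lambda,\rho)} = \mu_< + \mu_>$ with $\mu_> := \mu^{(\alpha;\lambda,\rho)}\mathbf{1}_{|u|>1}$, and, after reshuffling the compensating drift, split $\Psi_\alpha = \widetilde\Psi_< + \Psi_>$ with $\Psi_>(\xi)=\int(e^{iu\xi}-1)\mu_>(du)$. Since $\mu_>$ has finite total mass (bounded by $C\lambda_{\max}$), the inverse Fourier transform $g_>$ of $e^{\Psi_>}$ is a compound-Poisson distribution consisting of a point mass at the drift location plus an absolutely continuous part pointwise dominated by $C|y|^{-\alpha-1}\mathbf{1}_{|y|>1}$. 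The inverse Fourier transform $g_<$ of $e^{\widetilde\Psi_<}$ still satisfies the $e^{-c|\xi|^\alpha}$-envelope from Step 1 and, thanks to the bounded support of $\mu_<$, has super-polynomial decay. Writing $g^{(\lambda,\rho,\upsilon)} = g_< \ast g_>$ and splitting the convolution according to the two regions $|y|<|x|/2$ and $|y|\geq|x|/2$ then yields $g(x)\leq C|x|^{-\alpha-1}$ for $|x|>1$, which combined with Step 2 is exactly $g \leq CG^{(\alpha)}$. The same decomposition applied after differentiation in $\lambda$ or $\rho$ gives identical tail bounds, because $\partial_\lambda\Psi_\alpha$ and $\partial_\rho\Psi_\alpha$ share the $\alpha$-stable structural form of $\Psi_\alpha$. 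Uniformity in $(\lambda,\rho,\upsilon)$ is preserved because every constant in the construction depends only on $\alpha$, $\lambda_{\min}$, $\lambda_{\max}$, and $R$.
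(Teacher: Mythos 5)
Your proposal follows essentially the same route as the paper: a small-jump/large-jump split of the L\'evy measure, analytic continuation (contour shift) to get super-polynomial decay of the small-jump factor, a compound-Poisson description of the large-jump factor, and a convolution estimate to propagate the $|x|^{-\alpha-1}$ tail. Two places are glossed over and need to be made precise. First, the absolutely continuous part of $g_>$ is \emph{not} supported on $\{|y|>1\}$ nor dominated by $C|y|^{-\alpha-1}\mathbf{1}_{|y|>1}$: the $k$-fold convolutions $m^{\ast k}$ ($k\ge 2$) are positive near the origin, so the correct statement is $q_>(y)\leq C\,G^{(\alpha)}(y)$, and even this requires knowing that $m\leq CG^{(\alpha)}$ together with the sub-convolution inequality \eqref{sub_conv_simple}, $G^{(\alpha)}\ast G^{(\alpha)}\leq CG^{(\alpha)}$, so that $m^{\ast k}\leq C^{k}G^{(\alpha)}$ with a summable growth in $k$. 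Second, the sentence asserting that ``the same decomposition applied after differentiation in $\lambda$ or $\rho$ gives identical tail bounds'' hides genuine bookkeeping: both the intensity $2\lambda/\alpha$ of the Poisson clock and the small-jump factor $g_<$ depend on $\lambda$, and $m^{(\alpha,\rho)}$ depends on $\rho$, so $\partial_\lambda$ (resp. $\partial_\rho$) applied to the $k$-th series term produces $k+1$ convolution products; one must check that the resulting factor $k+1$ is still dominated by the $1/k!$ in the Poisson expansion. Once these two points are supplied, the argument matches the paper's proof of Proposition~\ref{pA4}.
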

\begin{proof}
 We use the standard trick of a decomposition of an infinitely divisible law into a convolution of `small' and `large' jump parts.  Namely, we put
$$
\Psi^{(\lambda,\rho,\upsilon)}_{\alpha,small}(\xi)=i\xi\upsilon+ \int_{|u|\leq 1}\Big(e^{iu\xi}-1-iu\xi\Big)\mu^{(\alpha,\lambda,\rho)}(du),\quad \Psi^{(\lambda,\rho)}_{\alpha,large}(\xi)= \int_{|u|> 1}\Big(e^{iu\xi}-1\Big)\mu^{(\alpha,\lambda,\rho)}(du),
$$
and observe that $\exp \Psi^{(\lambda,\rho)}_{\alpha,large}(\xi)$ is the Fourier transform of a compound Poisson process with the intensity of the `Poisson clock' equal
$$
\int_{|u|>1}\mu^{(\alpha,\lambda,\rho)}(du)={2\lambda\over \alpha},
$$
and with the law of a single jump having the density
$$
m^{(\alpha,\rho)}(u)={\alpha(1+\rho\, \sgn\, u)\over 2|u|^{\alpha+1}}1_{|u|>1}.
$$
On the other hand,
$$
\Psi^{(\lambda,\rho,\upsilon)}_{\alpha,small}(\xi)= i\xi\upsilon-\lambda \int_{|u|\leq 1}{1-\cos (u\xi)\over |u|^{\alpha+1}}\,du-i\lambda\rho \int_{|u|\leq 1}{u\xi\sin(u\xi)\over |u|^{\alpha+1}}\,du,
$$
and in particular
\be\label{Cau_1}
\mathrm{Re}\, \Psi^{(\lambda,\rho,\upsilon)}_{\alpha,small}(\xi)=-\lambda \int_{|u|\leq 1}{1-\cos (u\xi)\over |u|^{\alpha+1}}\,du\leq -c_1|\xi|^\alpha+c_2
\ee
with some positive $c_1, c_2$. This yields that the inverse Fourier transform $$
f^{(\lambda,\rho,\upsilon)}(x)={1\over 2\pi}\int_{\Re}e^{-ix\xi+\Psi^{(\lambda,\rho,\upsilon)}_{\alpha,small}(\xi)}\, d\xi
$$
is well defined. Then the  density $g^{(\lambda,\rho,\upsilon)}(x)$ is also well defined and possesses the representation
\be\label{g_rep}
g^{(\lambda,\rho,\upsilon)}(x)=e^{-2\lambda/\alpha}f^{(\lambda,\rho,\upsilon)}(x)+e^{-2\lambda/\alpha}\sum_{k=1}^\infty  {1\over k!}\Big(f^{(\lambda,\rho,\upsilon)}\ast [\widetilde m^{(\lambda,\rho,\upsilon)}]^{\ast k}\Big)(x)
\ee
with
$$
\widetilde m^{(\alpha,\lambda,\rho)}(\xi)=(2\lambda/\alpha)m^{(\alpha,\rho)}(\xi).
$$
We claim that, uniformly in $\lambda\in [\lambda_{\min}, \lambda_{\max}]$, $\rho\in [-1,1]$, $|\upsilon|\leq R$, $x\in \Re$,
\be\label{f_bounds}
|{f}^{(\lambda,\rho,\upsilon)}(x)|+|\prt_\lambda{f}^{(\lambda,\rho,\upsilon)}(x)|+|\prt_\rho{f}^{(\lambda,\rho,\upsilon)}(x)|
+|\prt_x{f}^{(\lambda,\rho,\upsilon)}(x)|+|\prt_{xx}^2{f}^{(\lambda,\rho,\upsilon)}(x)|\leq C e^{-|x|}.
\ee
The argument here is quite standard (e.g. \cite{KK13}), but for the sake of completeness we outline  the proof. The function $\Psi^{(\lambda,\rho,\upsilon)}_{\alpha, small}$ is defined as an integral over the bounded interval $[-1,1]$, and thus has an analytic extension to $\mathbb{C}$:
$$\ba
\Psi^{(\lambda,\rho,\upsilon)}_{small}(\xi+i\phi)&=i\xi\upsilon-\phi\upsilon+\int_{|u|\leq 1}(e^{-u\phi+iu\xi}-1-iu\xi+u\phi)\,\mu^{(\alpha,\lambda,\rho)}(du)
\\&=\int_{|u|\leq 1}e^{-u\phi}(e^{iu\xi}-1-iu\xi)\,\mu^{(\alpha,\lambda,\rho)}(du)+\int_{|u|\leq 1}(e^{-u\phi}-1+ u\phi)\,\mu^{(\alpha,\lambda,\rho)}(du)
\\&\hspace{2cm}-\phi\upsilon+i\xi\upsilon+i\int_{|u|\leq 1}(e^{-u\phi}-1)u\xi\,\mu^{(\alpha,\lambda,\rho)}(du).
\ea
$$
Then for $\phi\in [-1,1]$ we have
\be\label{Cau_2}\ba
\mathrm{Re}\,\Psi^{(\lambda,\rho,\upsilon)}_{\alpha,small}(\xi+i\phi)&=-\int_{|u|\leq 1}e^{-u\phi}(1-\cos(u\xi))\,\mu^{(\alpha,\lambda,\rho)}(du)
\\&+\int_{|u|\leq 1}(e^{-u\phi}-1+ u\phi)\,\mu^{(\alpha,\lambda,\rho)}(du)-\phi\upsilon
\\&\leq -e^{-1}\lambda \int_{|u|\leq 1}{1-\cos (u\xi)\over |u|^{\alpha+1}}\,du
\leq -e^{-1}(c_1|\xi|^\alpha+c_2), \quad \xi\in \Re,
\ea
\ee
see \ref{Cau_1}. This makes it possible to change the integration contour in the  inverse Fourier transform formula from $\Re=\Re+i0$ to $\Re+i\phi$, which gives
$$
f^{(\lambda,\rho,\upsilon)}(x)={1\over 2\pi}\int_{\Re}e^{-ix\xi+x\phi+\Psi^{(\lambda,\rho,\upsilon)}_{\alpha, small}(\xi+i\phi)}\, d\xi.
$$
Take $\phi=\phi_x=-\mathrm{sgn}\, x$, then
$$
f^{(\lambda,\rho,\upsilon)}(x)={e^{-|x|}\over 2\pi}\int_{\Re}e^{-ix\xi+\Psi^{(\lambda,\rho,\upsilon)}_{\alpha,small}(\xi+i\phi_x)}\, d\xi.
$$
This representation and \eqref{Cau_2} give
$$\ba
{f}^{(\lambda,\rho,\upsilon)}(x)&+|\prt_\lambda{f}^{(\lambda,\rho,\upsilon)}(x)|+|\prt_\rho{f}^{(\lambda,\rho,\upsilon)}(x)|+|\prt_x{f}^{(\lambda,\rho,\upsilon)}(x)|+|\prt_{xx}^2{f}^{(\lambda,\rho,\upsilon)}(x)|
\\&
\leq Ce^{-|x|}\int_{\Re}e^{\mathrm{Re}\,\Psi^{(\lambda,\rho,\upsilon)}_{\alpha,small}(\xi+i\phi_x)}\Big(1+|\prt_\lambda \Psi^{(\lambda,\rho,\upsilon)}_{\alpha,small}(\xi+i\phi_x)|+|\prt_\rho \Psi^{(\lambda,\rho,\upsilon)}_{\alpha,small}(\xi+i\phi_x)|+|\xi|+\xi^2\Big)\, d\xi.
\ea
$$
It is easy to check that
$$
|\prt_\lambda \Psi^{(\lambda,\rho,\upsilon)}_{\alpha,small}(\xi+i\phi_x)|+|\prt_\rho \Psi^{(\lambda,\rho,\upsilon)}_{\alpha,small}(\xi+i\phi_x)|\leq C(1+\xi^2),
$$
hence  \eqref{f_bounds} follows by \eqref{Cau_2}.

Next, we give explicitly the function $\widetilde m^{(\alpha,\lambda,\rho)}(u)$ and its derivatives:
$$
\widetilde m^{(\alpha,\lambda,\rho)}(u)=\lambda{1+\rho\, \sgn\, u\over |u|^{\alpha+1}}1_{|u|>1},
$$
$$
\prt_\lambda \widetilde m^{(\alpha,\lambda,\rho)}(u)={1+\rho\, \sgn\, u\over |u|^{\alpha+1}}1_{|u|>1},\quad \prt_\rho\widetilde m^{(\alpha,\lambda,\rho)}(u)={\lambda\, \sgn\, u\over |u|^{\alpha+1}}1_{|u|>1},
$$
and observe that the absolute values of these functions are dominated by $CG^{(\alpha)}(u)$.

Now we can finalize the proof.  It follows from \eqref{f_bounds} that
$$
{f}^{(\lambda,\rho,\upsilon)}(x)\leq C G^{(\alpha)}(x),
$$
then taking $C$ large enough we obtain inductively
$$
\Big(f^{(\lambda,\rho,\upsilon)}\ast [\widetilde m^{(\alpha,\lambda,\rho)}]^{\ast k}\Big)(x)\leq C^{2k+1}G^{(\alpha)}(x),
$$
and applying \eqref{g_rep} we complete the proof of \eqref{g_bound0}. The proofs of \eqref{g_bound_lamb}, \eqref{g_bound_rho} are essentially the same. The minor difference is that respective derivatives of $
f^{(\lambda,\rho,\upsilon)}\ast [\widetilde m^{(\alpha,\lambda,\rho)}]^{\ast k}$
now actually contain $(k+1)$ summands, each of them being a $(k+1)$-fold convolution where each term is dominated by $CG^{(\alpha)}(x)$; however the extra  multiplier $(k+1)$ is not essential thanks to the term $1/k!$ in \eqref{g_rep}.
\end{proof}

\begin{prop}\label{pA5} The density $g^{(\lambda,\rho,\upsilon)}(x)$ belongs to the class $C^2_\infty$ w.r.t. $x$. The following bounds hold true for each $\alpha\in (0,2), 0<\lambda_{\min}\leq \lambda_{\max}, R>0$  uniformly in $\lambda\in [\lambda_{\min}, \lambda_{\max}]$, $\rho\in [-1,1]$, $|\upsilon|\leq R$, $x\in \Re$:
\be\label{g_bound_x}
|\prt_x{g}^{(\lambda,\rho,\upsilon)}(x)|\leq C G^{(\alpha+1)}(x),
\ee
\be\label{g_bound_xx}
|\prt^2_{xx}{g}^{(\lambda,\rho,\upsilon)}(x)|\leq C G^{(\alpha+2)}(x).
\ee
\be\label{g_bound_Lsym}
|L_x^{(\alpha),sym}{g}^{(\lambda,\rho,\upsilon)}(x)|\leq C G^{(\alpha)}(x),
\ee
\be\label{g_bound_Lasym}
|L_x^{(\alpha),asym}{g}^{(\lambda,\rho,\upsilon)}(x)|\leq C G^{(\alpha)}(x),
\ee
see \eqref{Lsym}, \eqref{Lasym} for the definition of $L^{(\alpha),sym}, L_x^{(\alpha),sym}$.
\end{prop}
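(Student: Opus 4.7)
I would split the proof into two essentially independent parts. \emph{Part A} reduces the operator bounds \eqref{g_bound_Lsym}, \eqref{g_bound_Lasym} to the parameter-derivative bounds \eqref{g_bound_lamb}, \eqref{g_bound_rho} from Proposition \ref{pA4}. \emph{Part B} derives the pointwise $x$-derivative bounds \eqref{g_bound_x}, \eqref{g_bound_xx} by reusing the small/large-jump decomposition \eqref{g_rep} used in the proof of Proposition \ref{pA4}.

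\textbf{Part A (operator bounds).} The Fourier symbols $m^{sym}(\xi), m^{asym}(\xi)$ of $L^{(\alpha),sym}, L^{(\alpha),asym}$ enter linearly in the characteristic exponent \eqref{stable_psi}:
\be
\Psi^{(\lambda,\rho,\upsilon)}_\alpha(\xi)=i\xi\upsilon+\lambda\, m^{sym}(\xi)+\lambda\rho\, m^{asym}(\xi).
\ee
Differentiating $\widehat{g^{(\lambda,\rho,\upsilon)}}(\xi)=\exp\Psi^{(\lambda,\rho,\upsilon)}_\alpha(\xi)$ in $\lambda$ and $\rho$ and inverting the Fourier transform (which is justified by the decay from \eqref{Cau_1}) yields the pointwise identities
\be
L^{(\alpha),asym}g^{(\lambda,\rho,\upsilon)}=\lambda^{-1}\partial_\rho g^{(\lambda,\rho,\upsilon)},\qquad L^{(\alpha),sym}g^{(\lambda,\rho,\upsilon)}=\partial_\lambda g^{(\lambda,\rho,\upsilon)}-\lambda^{-1}\rho\,\partial_\rho g^{(\lambda,\rho,\upsilon)}.
\ee
Since $\lambda\geq\lambda_{\min}>0$ and $|\rho|\leq 1$, combining with \eqref{g_bound_lamb}, \eqref{g_bound_rho} immediately gives \eqref{g_bound_Lsym} and \eqref{g_bound_Lasym}.

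\textbf{Part B (pointwise derivative bounds).} Starting from \eqref{g_rep} I transfer the $x$-derivatives onto $f^{(\lambda,\rho,\upsilon)}$:
\be
\partial_x^j g^{(\lambda,\rho,\upsilon)}=e^{-2\lambda/\alpha}\Bigl(\partial_x^j f^{(\lambda,\rho,\upsilon)}+\sum_{k\geq 1}\tfrac{1}{k!}\,\bigl(\partial_x^j f^{(\lambda,\rho,\upsilon)}\bigr)\ast \bigl[\widetilde m^{(\alpha,\lambda,\rho)}\bigr]^{\ast k}\Bigr),\quad j=1,2.
\ee
By \eqref{f_bounds} the first summand satisfies $|\partial_x^j f^{(\lambda,\rho,\upsilon)}(x)|\leq Ce^{-|x|}\leq C'G^{(\alpha+j)}(x)$, which trivially dominates $G^{(\alpha+j)}$. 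For $|x|\leq 2$ the required bounds on the convolution terms are immediate since $G^{(\alpha+j)}(x)\asymp 1$ and each $k$-fold convolution is uniformly bounded. For $|x|>2$, I exploit that the exponential decay of $\partial_x^j f^{(\lambda,\rho,\upsilon)}$ effectively localizes the convolution to the region where $|x-y|\geq |x|/2>1$, on which the kernel $\widetilde m^{\ast k}$ is smooth (its only singularity lies on $\{|u|=1\}$). Moving the $x$-derivatives onto $\widetilde m^{\ast k}$ and establishing by induction in $k$ the tail bound
\be
|\partial_x^j \widetilde m^{\ast k}(x)|\leq C^{k+1}G^{(\alpha+j)}(x),\quad |x|>2,\quad j=0,1,2,
\ee
one gets $|(\partial_x^j f)\ast \widetilde m^{\ast k}(x)|\leq \widetilde C^{k+1}G^{(\alpha+j)}(x)$; summation against $1/k!$ then yields \eqref{g_bound_x} and \eqref{g_bound_xx}. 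The $C^2_\infty$-regularity claim follows because $G^{(\alpha+j)}\in C_\infty$.

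\textbf{Main obstacle.} The central technical point is the propagation of the strong polynomial tail $G^{(\alpha+j)}$ through the $k$-fold convolution against $\widetilde m$. A naive estimate $\widetilde m\ast\widetilde m\lesssim G^{(\alpha)}$ loses the crucial $|x|^{-j}$ improvement, because convolution of a stable tail with itself is saturated at the stable rate $|x|^{-\alpha-1}$ rather than the derivative rate $|x|^{-\alpha-1-j}$. The remedy is to carry out the induction by moving the $x$-derivatives onto $\widetilde m$ in the region $|x|>2$ where this is legitimate, exploiting $|\partial_x^j \widetilde m(x)|\leq CG^{(\alpha+j)}(x)$ for $|x|>1$, and splitting the convolution integral into a near-$0$ part and a near-$x$ part; the factorial prefactor $1/k!$ in \eqref{g_rep} then absorbs the constants $C^{k+1}$ coming from this propagation, exactly as in the final summation step of the proof of Proposition \ref{pA4}.
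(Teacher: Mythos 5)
Your reduction of \eqref{g_bound_Lsym}, \eqref{g_bound_Lasym} to the parameter-derivative bounds \eqref{g_bound_lamb}, \eqref{g_bound_rho} via the identities $L^{(\alpha),asym}g=\pm\lambda^{-1}\partial_\rho g$ and $L^{(\alpha),sym}g=\partial_\lambda g-\rho\lambda^{-1}\partial_\rho g$ is correct (the symbols enter the characteristic exponent exactly as $\lambda m^{sym}+\lambda\rho\,m^{asym}$, and the sign on the antisymmetric symbol, which comes from $m^{asym}$ being odd, is irrelevant for the absolute-value bounds). This is a genuine shortcut: the paper instead repeats the small/large-jump decomposition, shows $|L^{(\alpha),sym}f^{(\lambda,\rho,\upsilon)}|+|L^{(\alpha),asym}f^{(\lambda,\rho,\upsilon)}|\leq CG^{(\alpha)}$ from \eqref{f_bounds}, and propagates this through \eqref{g_rep} via \eqref{sub_conv_simple}. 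Your route gives exact algebraic identities and reuses Proposition \ref{pA4} as a black box, which is cleaner; the paper's route is self-contained and uniform in style with the rest of the appendix.

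\textbf{Part B has a genuine gap.} Your inductive claim
$|\partial_x^j \widetilde m^{\ast k}(x)|\leq C^{k+1}G^{(\alpha+j)}(x)$ for $|x|>2$, $j=0,1,2$,
rests on the assertion that the only singularity of $\widetilde m^{\ast k}$ lies on $\{|u|=1\}$. This is false for $k\geq 2$: $\widetilde m$ has jump discontinuities at $\pm 1$, so $\widetilde m^{\ast 2}$ is continuous but its first derivative has jumps at $\{0,\pm 2\}$ (the sums $\pm1\pm1$), hence $\partial_{xx}^2\widetilde m^{\ast 2}$ contains deltas there; more generally the singular support of $\widetilde m^{\ast k}$ is $\{0,\pm1,\dots,\pm k\}$, which is not contained in $\{|u|\leq 2\}$. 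More seriously, even granting some version of the tail bound, the proposed induction does not close: to pass from $\widetilde m^{\ast k}$ to $\widetilde m^{\ast(k+1)}(x)=\int\widetilde m^{\ast k}(x-v)\widetilde m(v)\,dv$ you must evaluate $\widetilde m^{\ast k}$ at points $x-v$ with $|v|>1$, and for $|x|>2$ these points are by no means confined to $\{|y|>2\}$, so the inductive hypothesis is unavailable precisely where the singularities of $\widetilde m^{\ast k}$ sit. The paper avoids both difficulties by never estimating derivatives of $\widetilde m^{\ast k}$ itself: it formulates the iterable claim \eqref{iterate} at the level of a single convolution step, for a function $f\in C^1$ that carries the \emph{pair} of bounds $|f|\leq C_fG^{(\alpha)}$, $|f'|\leq C_fG^{(\alpha+1)}$, and shows $f\ast\widetilde m$ carries the same pair with $C_f$ replaced by $CC_f$. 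The smoothness needed for the integration by parts is always supplied by $f$ (which stays globally $C^1$ throughout the iteration) and by a single factor $\widetilde m$ restricted to $(x/2,\infty)\subset(1,\infty)$, where it is smooth. Starting the iteration from $f^{(\lambda,\rho,\upsilon)}$ and summing against $1/k!$ yields \eqref{g_bound_x}; \eqref{g_bound_xx} then follows by the same scheme applied to the triple $(f,f',f'')$. This single-step propagation of the function–derivative pair is the key device your proposal is missing.
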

\begin{proof} The proofs of \eqref{g_bound_Lsym}, \eqref{g_bound_Lasym} are completely analogous to the previous proof. Namely,  using \eqref{f_bounds} it is easy to verify that
$$
|L_x^{(\alpha),sym}{f}^{(\lambda,\rho,\upsilon)}(x)|+|L_x^{(\alpha),asym}{f}^{(\lambda,\rho,\upsilon)}(x)|\leq C G^{(\alpha)}(x),
$$
Then the required bounds follow by \eqref{g_rep} and \eqref{sub_conv_simple}. The new difficulty in \eqref{g_bound_x}, \eqref{g_bound_xx} is that the kernels in the right hand sides has the higher order of decay in $x$, and thus cannot be derived simply by \eqref{sub_conv_simple}.  We will prove the first of these inequalities only: the second one is quite analogous, though the calculation is more cumbersome. Like we did that in the previous proof, we use representation \eqref{g_rep} and analyze the derivatives of the terms in the right hand side sum. Note that by \eqref{Cau_1} the derivative $\prt_x f^{(\lambda,\rho,\upsilon)}(x)$ is well defined and
$$
\prt_x f^{(\lambda,\rho,\upsilon)}(x)={1\over 2\pi}\int_{\Re}(-i\xi)e^{-ix\xi+\Psi^{(\lambda,\rho,\upsilon)}_{\alpha, small}(\xi)}\, d\xi.
$$
Similarly to the previous proof, we deduce that
$$
|\prt_x f^{(\lambda,\rho,\upsilon)}(x)|\leq C e^{-|x|}\leq CG^{(\alpha+1)}(x).
$$
On the other hand, for $u\not=\pm1$ there exists
$$
\prt_u \widetilde m^{(\alpha,\lambda,\rho)}(u)=-(\alpha+1)\lambda{\sgn\, u+\rho \over |u|^{\alpha+2}}1_{|u|>1},
$$
and the absolute value of the latter function is dominated by $CG^{(\alpha+1)}(u)$.

Let us prove the following: there exists $C$ such that for any $f\in C^1$ with $|f(x)|\leq C_f G^{(\alpha)}(x)$, $|\prt_x f(x)|\leq C_f G^{(\alpha+1)}(x)$ the following inequalities hold:
\be\label{iterate}
|(f\ast \widetilde m^{(\alpha,\lambda,\rho)})(x)|\leq C C_f G^{(\alpha)}(x),\quad |\prt_x(f\ast \widetilde m^{(\alpha,\lambda,\rho)})(x)|\leq C C_f G^{(\alpha+1)}(x).
\ee
This will easily  yield
$$
|\prt_x\Big(f^{(\lambda,\rho,\upsilon)}\ast [\widetilde m^{(\alpha,\lambda,\rho)}]^{\ast k}\Big)(x)|\leq C^{k+1} G^{(\alpha+1)}(x)
$$
and complete the proof.

The first inequality in \eqref{iterate} follows just from  \eqref{sub_conv_simple}. To prove the second inequality, we first note that \eqref{sub_conv_simple} also yields that, for some  $C$,
$$
|\prt_x(f\ast \widetilde m^{(\alpha,\lambda,\rho)})(x)|=|(f'\ast \widetilde m^{(\alpha,\lambda,\rho)})(x)|
$$
is dominated by $C C_f G^{(\alpha)}(x)$. Hence it is sufficient to consider the case $|x|>2$, only. Let $x>2$, the case $x<-2$ is quite analogous. We have
$$\ba
\prt_x(f\ast \widetilde m^{(\alpha,\lambda,\rho)})(x)&=\int_{-\infty}^{x/2}f'(x-u)\widetilde m^{(\alpha,\lambda,\rho)}(u)\, du+\int_{x/2}^{\infty}f'(x-u)\widetilde m^{(\alpha,\lambda,\rho)}(u)\, du
\\&=\int_{-\infty}^{x/2}f'(x-u)\widetilde m^{(\alpha,\lambda,\rho)}(u)\, du+\int_{x/2}^{\infty}f(x-u)\Big(\widetilde m^{(\alpha,\lambda,\rho)}\Big)'(u)\, du
\\&-f(x/2)\widetilde m^{(\alpha,\lambda,\rho)}(x/2);
\ea
$$
note that $\widetilde m^{(\alpha,\lambda,\rho)}$ is smooth on $[x/2, \infty)\subset (1,\infty)$, hence we can apply the integration by parts formula here. The term $f'(x-u)$ in the first integral and the term $\Big(\widetilde m^{(\alpha,\lambda,\rho)}\Big)'(u)$ in the second integral are dominated by $C  G^{(\alpha+1)}(x/2)$. Then both these integrals are dominated by  $C  C_f G^{(\alpha+1)}(x/2)$, and we complete the proof of \eqref{iterate} using \eqref{G_mult}.
\end{proof}

\subsection{Properties of  $p_t^0(x,y)$: proofs of \eqref{Q1}, \eqref{arrow_id}, and \eqref{R_tilde_bound}.}\label{sA41}

\begin{proof}[Proof of \eqref{Q1}]
We decompose  $$
\ba
\Psi_\alpha(t,z;\xi)=\int_0^t \int_{\Re}\Big(e^{iu\xi}-1-iu\xi 1_{|u|\leq t^{1/\alpha}}\Big)&\mu^{(\alpha)}(\kappa_s(z);du)\, ds
\\&+
i\xi\int_0^t \int_{s^{1/\alpha}<|u|\leq t^{1/\alpha}}u\mu^{(\alpha)}(\kappa_s(z);du)ds.
\ea
$$
 Since the density of  $\mu^{(\alpha)}(\kappa_s(z);du)$ is a homogeneous function of $u$ of the order $-(\alpha+1)$, changing the variables  $v=ut^{-1/\alpha}$ we get
$$
\ba
\int_0^t& \int_{\Re}\Big(e^{iu\xi}-1-iu\xi 1_{|u|\leq t^{1/\alpha}}\Big)\mu^{(\alpha)}(\kappa_s(z);du)\, ds
\\&={1\over t}\int_0^t \int_{\Re}\Big(e^{ivt^{1/\alpha}\xi}-1-ivt^{1/\alpha}\xi 1_{|v|\leq 1}\Big)\mu^{(\alpha)}(\kappa_s(z);dv)\, ds
=\Psi^{(\wt\lambda_t(z),\wt \rho_t(z), 0)}_\alpha(t^{1/\alpha}\xi),
\ea
$$
see \eqref{stable_psi} for the definition of $\Psi^{\lambda, \rho, \upsilon}_\alpha$.
On the other hand, we have straightforwardly
$$
\int_0^t \int_{s^{1/\alpha}\leq|u|\leq t^{1/\alpha}}u\mu^{(\alpha)}(\kappa_s(z);du)ds=\int_0^t \upsilon(\kappa_s(z))\int_{s^{1/\alpha}}^{ t^{1/\alpha}}r\,{dr\over r^{\alpha+1}}ds=t^{1/\alpha}\wt \upsilon_t(z),
$$
see Section \ref{s41} for the definition of $\wt \upsilon_t$.
Thus
$$
\Psi_\alpha(t,z;\xi)=i\xi t^{1/\alpha}\wt \upsilon_t(z)+\Psi^{(\wt\lambda_t(z),\wt \rho_t(z), 0}_\alpha(t^{1/\alpha}\xi)=\Psi^{(\wt\lambda_t(z),\wt \rho_t(z), \wt \upsilon_t(z))}_\alpha(t^{1/\alpha}\xi).
$$
Therefore
$$\ba
h^{t, z}(w)&={1\over 2\pi}\int_\Re \exp\left[-iw\xi+\Psi_\alpha(t,z;\xi)\right]\, d\xi
\\&={1\over 2\pi}\int_\Re \exp\left[-iw\xi+\Psi^{(\wt\lambda_t(z),\wt \rho_t(z), \wt \upsilon_t(z))}_\alpha(t^{1/\alpha}\xi)\right]\, d\xi
=t^{-1/\alpha} g^{(\wt\lambda_t(z),\wt \rho_t(z), \wt \upsilon_t(z))}\left(w\over t^{1/\alpha}\right),
\ea
$$
which yields \eqref{p^01}.
\end{proof}

\begin{proof}[Proof of \eqref{arrow_id}.] Denote
$$
\psi_\alpha(t,z;\xi)= \prt_t\Psi_\alpha(t,z;\xi)= \int_{\Re}\Big(e^{iu\xi}-1-iu\xi 1_{|u|\leq t^{1/\alpha}}\Big)\mu^{(\alpha)}(\kappa_t(z);du).
$$
It is easy to show that $|\psi_\alpha(t,z;\xi)|\leq C(1+\xi^2)$. On the other hand, similarly to \eqref{Cau_1}, we have that for any $0<\tau<T$ there exist constants $c_1, c_2$ such that
$$
\mathrm{Re}\,\Psi_\alpha(t,z;\xi)\leq -c_1|\xi|^\alpha+c_2, \quad t\in [\tau, T].
$$
Then the dominated convergence gives
$$
\prt_t h^{t, z}(w-x)=\lim_{R\to \infty}{1\over 2\pi}\int_{-R}^R \psi_\alpha(t,z;\xi)e^{-iw\xi+ix\xi+\Psi_\alpha(t,z;\xi)}\, d\xi={1\over 2\pi}\int_\Re \psi_\alpha(t,z;\xi)e^{-iw\xi+ix\xi+\Psi_\alpha(t,z;\xi)}\, d\xi.
$$
for $t\in [\tau, T]$. Repeating the same argument, we get
$$\ba
\Lba^{(\alpha),z,t}_xh^{t, z}(w-x)&={1\over 2\pi}\int_\Re e^{-iw\xi+\Psi_\alpha(t,z;\xi)} \Big(\Lba^{(\alpha),z,t}_xe^{ix\xi}\Big)\, d\xi
={1\over 2\pi}\int_\Re e^{-iw\xi+\Psi_\alpha(t,z;\xi)} \psi_\alpha(t,z;\xi) e^{ix\xi}\, d\xi,
\ea
$$
$t\in [\tau, T]$, which proves \eqref{arrow_id} for these values of $t$. Since $0<\tau<T$ are arbitrary, this completes the proof.
\end{proof}

\begin{proof}[Proof of \eqref{R_tilde_bound}] Denote
$$
\overleftarrow\upsilon_t(x)=\int_0^t\upsilon(\chi_s(x))W_\alpha(t;t-s)\, ds.
$$
We have
$$\ba
p_t^0(x,y)-  p_t^{\mathrm{main}}(x,y)  &=\left({1\over t^{1/\alpha}} g^{(\wt \lambda_t(y), \wt \rho_t(y), \wt \upsilon_t(y))}\left({\kappa_t(y)-x\over t^{1/\alpha}}\right)-{1\over t^{1/\alpha}} g^{(\lambda_t(x), \rho_t(x), \overleftarrow\upsilon_t(x))}\left({\kappa_t(y)-x\over t^{1/\alpha}}\right)\right)
\\&+
\left({1\over t^{1/\alpha}} g^{(\lambda_t(x), \rho_t(x), \overleftarrow\upsilon_t(x))}\left({\kappa_t(y)-x\over t^{1/\alpha}}\right)-{1\over t^{1/\alpha}}g^{(\lambda_t(x), \rho_t(x), \upsilon_t(x))}\left({y-\chi_t(x)\over t^{1/\alpha}}\right)\right)
\\&=:R^1_t(x,y)+R_t^2(x,y).
\ea
$$
Note that
$$
{g}^{(\lambda,\rho,\upsilon)}(w)={g}^{(\lambda,\rho,0)}(w-\upsilon).
$$
Then by \eqref{g_bound_lamb}, \eqref{g_bound_rho}, and \eqref{g_bound_x} for any $0<\lambda_{\min}\leq \lambda_{\max}$, $R>0$ there exists $C$ such that for any $\lambda_{1}, \lambda_2\in [\lambda_{\min},  \lambda_{\max}],$ $\rho_1,\rho_2\in [-1, 1]$, $\upsilon_1,\upsilon_2\in [-R, R],$ and $x\in \Re$
\be\label{hhh}
|{g}^{(\lambda_1,\rho_1, \upsilon_1)}(x)-{g}^{(\lambda_2,\rho_2,\upsilon_2)}(x)|\leq C\Big(|\lambda_1-\lambda_2|+|\rho_1-\rho_2|+|\upsilon_1-\upsilon_2|\Big)G^{(\alpha)}(x).
\ee
We have
$$
|\wt \lambda_t(y)-\lambda_t(x)|=\left|{1\over t}\int_0^t\lambda(\kappa_{\tau}(y))\, d\tau-{1\over t}\int_0^t\lambda(\chi_s(x))\, ds\right|
=\left|{1\over t}\int_0^t\Big(\lambda(\kappa_{t-s}(y))-\lambda(\chi_s(x))\Big)\, ds\right|,
$$
in the last identity we changed the variable $\tau=t-s$. By \eqref{flowCorr},
$$
|\kappa_{t-s}(y)-\chi_s(x)|\leq C|y-\chi_t(x)|+Ct^{1/\alpha}, \quad s\in [0,t].
$$
Since function  $\lambda(\cdot)$ is $\zeta$-H\"older continuous and bounded, this gives
$$
|\wt \lambda_t(y)-\lambda_t(x)|\leq  C\Big(t^{\delta_\zeta}+|y-\chi_t(x)|^\zeta\wedge 1\Big).
$$
Similarly, $$
|\wt \rho_t(y)-\rho_t(x)|\leq  C\Big(t^{\delta_\zeta}+|y-\chi_t(x)|^\zeta\wedge 1\Big).
$$
Finally,
$$\ba
|\wt \upsilon_t(y)-\overleftarrow\upsilon_t(x)|&=\left|\int_0^t\upsilon(\kappa_\tau(y)) W_\alpha(t;\tau)\, d\tau-\int_0^t\upsilon(\chi_s(x))W_\alpha(t;t-s)\, ds\right|
\\&\leq \int_0^t|\upsilon(\kappa_{t-s}(y))- \upsilon(\chi_s(x))|W_\alpha(t;t-s)\, ds
\leq  C\Big(t^{\delta_\zeta}+|y-\chi_t(x)|^\zeta\wedge 1\Big).
\ea
$$
Thus by \eqref{hhh}
$$
| R^1_t(x,y)|\leq  C\Big(t^{\delta_\zeta}+|y-\chi_t(x)|^\zeta\wedge 1\Big)\left({1\over t^{1/\alpha}}G^{(\alpha)}\left({\kappa_t(y)-x\over t^{1/\alpha}}\right)\right).
$$
By  \eqref{flowCorr} and \eqref{vague}, \eqref{G_mult},
$$
{1\over t^{1/\alpha}}G^{(\alpha)}\left({\kappa_t(y)-x\over t^{1/\alpha}}\right)\leq C{1\over t^{1/\alpha}}G^{(\alpha)}\left({y-\chi_t(x)\over t^{1/\alpha}}\right)=G_t^{(\alpha, \alpha, \alpha)}(\chi_t(x), y).
$$
This gives finally
$$
|R^1_t(x,y)|\leq C\Big(t^{\delta_\zeta}+|y-\chi_t(x)|^\zeta\wedge 1\Big)G_t^{(\alpha, \alpha, \alpha)}(\chi_t(x), y)\leq Ct^{\delta_\zeta}G_t^{(\alpha,\alpha-\zeta,\alpha)}(\chi_t(x), y).
$$
Next, we decompose
$$\ba
R_t^2(x,y)&=\left({1\over t^{1/\alpha}} g^{(\lambda_t(x), \rho_t(x), \overleftarrow\upsilon_t(x))}\left({\kappa_t(y)-x\over t^{1/\alpha}}\right)-{1\over t^{1/\alpha}} g^{(\lambda_t(x), \rho_t(x), \overleftarrow\upsilon_t(x))}\left({y-\chi_t^t(x)\over t^{1/\alpha}}\right)\right)
\\&+\left({1\over t^{1/\alpha}} g^{(\lambda_t(x), \rho_t(x), \overleftarrow\upsilon_t(x))}\left({y-\chi_t^t(x)\over t^{1/\alpha}}\right)-{1\over t^{1/\alpha}}g^{(\lambda_t(x), \rho_t(x), \upsilon_t(x))}\left({y-\chi_t(x)\over t^{1/\alpha}}\right)\right)
\\&=:R_t^{2,1}(x,y)+R_t^{2,2}(x,y).
\ea
$$
We have
$$\ba
|R_t^{2,1}(x,y)|&=\left|\int_0^t\prt_s\left({1\over t^{1/\alpha}} g^{(\lambda_t(x), \rho_t(x), \overleftarrow\upsilon_t(x))}\left({{\kappa_{t-s}(y)-\chi_s^t(x)\over t^{1/\alpha}}}\right)\right)\, ds\right|
\\
&=\left|{1\over t^{1/\alpha}}\int_0^t \big(g^{(\lambda_t(x), \rho_t(x), \overleftarrow\upsilon_t(x))}\big)'\left({{\kappa_{t-s}(y)-\chi_s^t(x)\over t^{1/\alpha}}}\right){B_{t-s}(\kappa_{t-s}(y))-B_{t-s}(\chi_s^t(x))\over t^{1/\alpha}}\, ds\right|.
\ea
$$
Using \eqref{g_bound_x}, \eqref{Lip_B},  \eqref{flowA}, and \eqref{chi-t-chi-error3}, we get
$$\ba
&\left|{1\over t^{1/\alpha}}\big(g^{(\lambda_t(x), \rho_t(x), \overleftarrow\upsilon_t(x))}\big)'\left({{\kappa_{t-s}(y)-\chi_s^t(x)\over t^{1/\alpha}}}\right){B_{t-s}(\kappa_{t-s}(y))-B_{t-s}(\chi_s^t(x))\over t^{1/\alpha}}\right|
\\&\hspace{2cm}\leq {C\over t^{1/\alpha}}G^{(\alpha+1)}\left({\kappa_{t-s}(y)-\chi_s^t(x)\over t^{1/\alpha}}\right)\mathrm{Lip}\,(B_{t-s}){|\kappa_{t-s}(y)-\chi_s^t(x)|\over t^{1/\alpha}}
\\&\hspace{2cm}\leq {C(t-s)^{-1+\delta}\over t^{1/\alpha}}G^{(\alpha)}\left({y-\chi_t(x)\over t^{1/\alpha}}\right),
\ea
$$
which gives
$$
|R_t^{2,1}(x,y)|\leq Ct^{\delta}G_t^{(\alpha,\alpha,\alpha)}(\chi_t(x), y).
$$
Finally, we have
$$
g^{(\lambda_t(x), \rho_t(x), \overleftarrow\upsilon_t(x))}\left({y-\chi_t^t(x)\over t^{1/\alpha}}\right)=
g^{(\lambda_t(x), \rho_t(x), 0)}\left({y-\chi_t^t(x)-t^{1/\alpha}\overleftarrow\upsilon_t(x)\over t^{1/\alpha}}\right),
$$
$$
g^{(\lambda_t(x), \rho_t(x), \upsilon_t(x))}\left({y-\chi_t(x)\over t^{1/\alpha}}\right)=g^{(\lambda_t(x), \rho_t(x), 0)}\left({y-\chi_t(x)-t^{1/\alpha}\upsilon_t(x)\over t^{1/\alpha}}\right),
$$
and by \eqref{chi-t-chi}
$$\ba
 \chi_t^t(x)-\chi_t(x)&=t^{1/\alpha}\int_0^t \upsilon(\chi_r(x))\Big(W_\alpha(t;r)-W_\alpha(t;t-r)\Big)\, dr+Q_{t,t}(x)
 \\&=t^{1/\alpha}\upsilon_t(x)-t^{1/\alpha}\overleftarrow\upsilon_t(x)+Q_{t,t}(x).
 \ea
$$
Then by \eqref{chi-t-chi-error}
\be\label{chi-chi}
\left|\big(\chi_t^t(x)+t^{1/\alpha}\overleftarrow\upsilon_t(x)\big)-\big(\chi_t(x)+t^{1/\alpha}\upsilon_t(x)\big)\right|\leq Ct^{1/\alpha+\delta},
\ee
and similarly to the above estimates, using \eqref{g_bound_x},  \eqref{flowA}, and \eqref{chi-t-chi-error3}, we get
$$
|R_t^{2,2}(x,y)|\leq Ct^{\delta}G_t^{(\alpha,\alpha,\alpha)}(\chi_t(x), y).
$$
That is,
$$\ba
|p_t^0(x,y)-\wt p_t(x,y)|&\leq |R_t^{1}(x,y)|+|R_t^{2,1}(x,y)|+|R_t^{2,2}(x,y)|
\\&\leq Ct^{\delta_\zeta}G_t^{(\alpha,\alpha-\zeta,\alpha)}(\chi_t(x), y)+ Ct^{\delta}G_t^{(\alpha,\alpha,\alpha)}(\chi_t(x), y),
\ea
$$
which is just \eqref{R_tilde_bound}.
\end{proof}

\subsection{Properties of the kernels $G^{(\alpha,\beta,\gamma)}$.}\label{sA4}

\begin{proof}[Proofs of  \eqref{bint} and \eqref{G_kappa_bound}]
We have
$$
G_t^{(\alpha,\beta,\gamma)}(x,y)=G_t^{(\alpha,\beta,\beta)}(x,y)+t^{\beta/\alpha}\Big[|y-x|^{-\gamma-1}-|y-x|^{-\beta-1}\Big]1_{|y-x|>1},
$$
and
$$
G_t^{(\alpha,\beta,\beta)}(x,y)\leq t^{-1/\alpha}G^{(\beta)}\left(y-x\over t^{1/\alpha}\right).
$$
Since $G^{(\beta)}\in L_1(\Re)$, we get \eqref{bint}:
\be\label{bint2}
\int_{\Re}G_t^{(\alpha,\beta,\gamma)}(x,y)\, dx\leq C, \quad \int_{\Re}G_t^{(\alpha,\beta,\gamma)}(x,y)\, dy\leq C, \quad t\in (0, T].
\ee

Next, the kernel $G_t^{(\alpha,\beta,\gamma)}$ depends only on $(y-x)/t^{1/\alpha}$:
\be\label{FG}
G_t^{(\alpha,\beta,\gamma)}(x,y)=F_t^{(\alpha,\beta,\gamma)}\left({y-x\over t^{1/\alpha}}\right).
\ee
It is straightforward to verify that the corresponding function
\be\label{Fabg}
F^{(\alpha, \beta, \gamma)}_t(x)=    \left\{                                 \begin{array}{ll}{t^{-1/\alpha}},& |x|\leq (1\wedge t^{-1/\alpha}),\\
                                    {t^{-1/\alpha}} |x|^{-\beta-1}, & (1\wedge t^{-1/\alpha})<|x|\leq t^{-1/\alpha}),\\
                                 {t^{(\beta-\gamma-1)/\alpha}} |x|^{-\gamma-1}, & |x|>t^{-1/\alpha}
                                    \end{array}
                                  \right.
\ee
satisfies the analogues of \eqref{vague}, \eqref{G_mult}:
\be\label{vagueF}
F_t^{(\alpha,\beta,\gamma)}(x+v)\leq CF_t^{(\alpha,\beta,\gamma)}(x), \quad |v|\leq 1.
\ee
and for any $c>0$ there exists $C$ such that
\be\label{F_mult}
F_t^{(\alpha,\beta,\gamma)}(cx)\leq CF_t^{(\alpha,\beta,\gamma)}(x)
\ee
(the constants $C$ can be chosen the same for all $t\in (0, T]$). Using \eqref{flowCorr} with $s=t$ and \eqref{vagueF}, \eqref{F_mult},  we get
\be\label{G_kappa_chi}
G_t^{(\alpha,\beta,\gamma)}(x,\kappa_t(y))\leq CG_t^{(\alpha,\beta,\gamma)}(\chi_t(x),y).
\ee
Combined with \eqref{bint} this gives \eqref{G_kappa_bound}.
\end{proof}

We say that a non-negative kernel $H_t(x,y)$ has a \emph{sub-convolution property}, if for every $T>0$ there exists a constant $C$ such that
\begin{equation}\label{H0}
(H_{t-s}* H_s)(x,y)\leq C H_{t}(x,y), \quad t\in (0, T], \quad s\in (0, t), \quad x,y\in \Re.
\end{equation}

\begin{prop}\label{pAsub_conv} For arbitrary $\alpha, \beta, \gamma>0$, the kernel  $G_t^{(\alpha,\beta,\gamma)}(x,y)$
has a sub-convolution property.
\end{prop}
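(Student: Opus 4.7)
The plan is to exploit translation invariance---the kernel $G_t^{(\alpha,\beta,\gamma)}(x,y)$ depends only on $y-x$---to reduce the claim to the one-variable statement
\[
J_s(y) := \int_\RR G_{t-s}^{(\alpha,\beta,\gamma)}(0,z)\,G_s^{(\alpha,\beta,\gamma)}(z,y)\,dz \leq C\,G_t^{(\alpha,\beta,\gamma)}(0,y),
\]
uniformly for $0<s<t\leq T$ and $y\in\RR$. For brevity I write $g_r(y):=G_r^{(\alpha,\beta,\gamma)}(0,y)$.

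I would first record two elementary features of $g_r$. (a) The function $y\mapsto g_r(y)$ is decreasing in $|y|$ and satisfies the doubling estimate $g_r(y/2)\leq C\,g_r(y)$; this follows by inspecting the three zones of \eqref{abg}, each of which exhibits only polynomial decay, so halving $|y|$ costs at most a factor depending on $\alpha,\beta,\gamma$. (b) The integral bound $\int_\RR g_r(z)\,dz\leq C$ is already contained in \eqref{bint}, and moreover for $|y|\leq r^{1/\alpha}$ one has the sharper estimate $\int_{|z|\leq|y|/2}g_r(z)\,dz\leq C\,|y|\,r^{-1/\alpha}$, since the domain of integration then lies inside the flat zone $g_r\equiv r^{-1/\alpha}$.

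I would next split $\RR=\{|z|\leq|y|/2\}\cup\{|y-z|<|y|/2\}$. On the first region $|y-z|\geq|y|/2$, so (a) yields $g_s(y-z)\leq C\,g_s(y)$; on the second, after the change of variables $z\mapsto y-z$, the analogous bound $g_{t-s}(z)\leq C\,g_{t-s}(y)$ applies. Setting $I^-_r(y):=\int_{|z|\leq|y|/2}g_r(z)\,dz$, this produces
\[
J_s(y)\leq C\bigl[\,g_s(y)\,I^-_{t-s}(y)+g_{t-s}(y)\,I^-_s(y)\,\bigr].
\]

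The remaining and principal step is to verify
\[
g_s(y)\,I^-_{t-s}(y)+g_{t-s}(y)\,I^-_s(y)\leq C\,g_t(y),\qquad 0<s<t\leq T,\ y\in\RR,
\]
by case analysis on the position of $|y|$ relative to the scales $s^{1/\alpha},(t-s)^{1/\alpha},t^{1/\alpha}$ and $1$. The delicate point is that $g_s(y)$ can genuinely exceed $g_t(y)$ (for instance, when $s\ll t$ and $s^{1/\alpha}<|y|\leq t^{1/\alpha}$, where $g_s(y)=s^{\beta/\alpha}|y|^{-\beta-1}$ beats $g_t(y)=t^{-1/\alpha}$), so a term-by-term comparison of the two kernels is not available; instead, precisely in this regime $|y|$ lies in the flat zone of $g_{t-s}$, whence the sharper estimate in (b) gives $I^-_{t-s}(y)\asymp|y|/(t-s)^{1/\alpha}$, small enough to absorb the excess. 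Assuming without loss of generality that $s\leq t-s$, one checks in each case that either $g_s(y)\leq C\,g_t(y)$ directly, or else $g_s(y)\,I^-_{t-s}(y)\leq C\,g_t(y)$ via this compensation; the opposite regime $s>t-s$ is handled symmetrically through the second summand. The main obstacle is the bookkeeping involved---several combinations of four scales and three zones must be examined---but in each case the verification reduces to elementary power counting, and the resulting constant $C$ depends only on $\alpha,\beta,\gamma$ and $T$, as is needed for the parametrix iterations in Sections \ref{s6} and \ref{s7}.
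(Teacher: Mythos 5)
Your split $\RR=\{|z|\leq|y|/2\}\cup\{|y-z|<|y|/2\}$ does not cover $\RR$: for $y>0$ the set $\{z\geq 3y/2\}\cup\{z< -y/2\}$ lies in neither piece, and it has infinite measure. On this missing region both $|z|>|y|/2$ and $|y-z|\geq|y|/2$, so the pointwise bounds $g_{t-s}(z)\leq Cg_{t-s}(y)$ and $g_s(y-z)\leq Cg_s(y)$ are available, but you cannot simply multiply them and integrate over an unbounded set; you must pull out one factor and use the $L_1$ bound for the other, which gives a contribution of order $\min\bigl(g_s(y),g_{t-s}(y)\bigr)$. This extra term does not appear in your inequality $J_s(y)\leq C[\,g_s(y)I^-_{t-s}(y)+g_{t-s}(y)I^-_s(y)\,]$, so that inequality is not established. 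The omission is repairable -- since $\max(s,t-s)\geq t/2$, a short zone-by-zone check gives $\min(g_s(y),g_{t-s}(y))\leq Cg_t(y)$ -- but as written the proof has a hole precisely at the step you flag as routine.

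For comparison, the paper avoids the issue by choosing the complementary (and genuinely covering) pair of regions $\{|z|>|y|/2\}$ and $\{|y-z|>|y|/2\}$: on each piece the factor evaluated far from the origin is pulled out and bounded by $g_\cdot(y/2)\leq Cg_\cdot(y)$, and the remaining factor is integrated using the full $L_1$ bound rather than your refined $I^-$ quantity. This yields $J_s(y)\leq C\bigl(g_{t-s}(y)+g_s(y)\bigr)$, which is then combined with a preliminary step handling $|y|\leq 2t^{1/\alpha}$ via the sup estimate $\|g_{\max(s,t-s)}\|_\infty\leq Ct^{-1/\alpha}$. That two-step structure eliminates both the need for your sharper estimate (b) (which, incidentally, is stated slightly too broadly: for $|y|\leq r^{1/\alpha}$ with $r^{1/\alpha}>1$ the ball $\{|z|\leq|y|/2\}$ may leave the flat zone $|z|\leq r^{1/\alpha}\wedge 1$) and the bulk of the case bookkeeping in your principal step.
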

\begin{proof} We have   for $H_t(x,y)=G_t^{(\alpha,\beta,\gamma)}(x,y)$
$$
  \sup_{x,y}H_{t-s}(x,y)\leq \left({2\over t}\right)^{1/\alpha}
$$ for  $s<t/2$, and
$$
  \sup_{x,y}H_s(x,y)\leq \left({2\over t}\right)^{1/\alpha}
$$
otherwise. In both these cases we have by \eqref{bint2}
$$
\sup_{x,y}(H_{t-s}* H_s)(x,y)=\int_{\Re}H_{t-s}(x,z)H_s(z,y)\, dz\leq  Ct^{-1/\alpha}.
$$
 This proves \eqref{H0} for $x,y$ such that $|x-y|\leq 2t^{1/\alpha}$. Next,  $H_t(x,y)$ is positive and thus
$$\ba
(H_{t-s}*H_s)(x,y)&=\int_{\Re}H_{t-s}(x,z)H_s(z,y)\, dz
\\&\leq \int_{|x-z|>|x-y|/2}H_{t-s}(x,z)H_s(z,y)\, dz+\int_{|y-z|>|x-y|/2}H_{t-s}(x,z)H_s(z,y)\, dz.
\ea
$$
  The function $F_t^{(\alpha,\beta,\gamma)}(x)$ in the presentation \eqref{FG} of $H_t(x,y)=G_t^{(\alpha,\beta,\gamma)}(x,y)$, for  a fixed $t$,  depends only on $|x|$, and is a non-increasing function of $|x|$. Hence
$$
H_{t-s}(x,z)\leq H_{t-s}(x/2,y/2), \quad \hbox{ for } |x-z|>|x-y|/2\Leftrightarrow |x-z|>\left|{x\over 2}-{y\over 2}\right|.
$$
and
$$
H_{s}(z,y)\leq H_{s}(x/2,y/2), \quad \hbox{ for } |y-z|>|x-y|/2\Leftrightarrow |y-z|>\left|{x\over 2}-{y\over 2}\right|.
$$
Therefore by \eqref{bint2} 
$$\ba
(H_{t-s}* H_s)(x,y)&\leq \int_{|x-z|>|x-y|/2}H_{t-s}(x/2,y/2)H_s(z,y)\, dz+\int_{|y-z|>|x-y|/2}H_{t-s}(x,z)H_{s}(x/2,y/2)\, dz
\\&\leq C\Big(H_{t-s}(x/2,y/2)+H_{s}(x/2,y/2)\Big).
\ea
$$
Then for $|x-y|\geq 2t^{1/\alpha}$ we deduce
$$
(H_{t-s}* H_s)(x,y)\leq C((t-s)^{\beta/\alpha}+s^{\beta/\alpha})F^{(\beta,\gamma)}\left(y-x\over 2\right),
\quad
F^{(\beta,\gamma)}(x)=\left\{
                     \begin{array}{ll}
                       |x|^{-\beta-1}, & |x|\leq 1; \\
                       |x|^{-\gamma-1}, & |x|> 1.
                     \end{array}
                   \right.
$$
Clearly, $(t-s)^{\beta/\alpha}+s^{\beta/\alpha}\leq 2t^{\beta/\alpha}$ and
$$
F^{(\beta,\gamma)}(x/2)\leq C F^{(\beta,\gamma)}(x),
$$
which completes the proof of \eqref{H0} for $|x-y|\geq 2t^{1/\alpha}$.
\end{proof}

\begin{prop}\label{pAsub_conv_flow} The kernel
$$
H_t(x,y)=G_t^{(\alpha,\beta,\gamma)}(x,\kappa_t(y))
$$ has the sub-convolution property.
\end{prop}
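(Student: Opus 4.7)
My plan is to reduce the sub-convolution property for $H_t$ to the one already established for $G_t^{(\alpha,\beta,\gamma)}$ in Proposition \ref{pAsub_conv}, using \eqref{G_kappa_chi} to straighten the $\kappa$-flow in one factor and then pulling the result back to $H_t$ via the two-sided estimate \eqref{flowCorr}.

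I would start from
$$
(H_{t-s} \ast H_s)(x,y) = \int_\Re G_{t-s}^{(\alpha,\beta,\gamma)}(x, \kappa_{t-s}(z))\, G_s^{(\alpha,\beta,\gamma)}(z, \kappa_s(y))\, dz,
$$
and first apply \eqref{G_kappa_chi} to the first factor with $(t,y)\mapsto(t-s,z)$, which replaces the pair $(x, \kappa_{t-s}(z))$ by $(\chi_{t-s}(x), z)$ at the cost of a multiplicative constant. The integrand will then have exactly the form required by Proposition \ref{pAsub_conv} with $\chi_{t-s}(x)$ in the role of $x$ and $\kappa_s(y)$ in the role of $y$; invoking that proposition bounds the integral by $C\, G_t^{(\alpha,\beta,\gamma)}(\chi_{t-s}(x), \kappa_s(y))$. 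It will then remain to show that this last quantity is $\leq C\, H_t(x,y) = C\, G_t^{(\alpha,\beta,\gamma)}(x,\kappa_t(y))$.

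The final comparison is the only conceptually non-trivial step. Writing both sides through \eqref{FG}, the task reduces to comparing $F_t^{(\alpha,\beta,\gamma)}$ at the two arguments $(\chi_{t-s}(x) - \kappa_s(y))/t^{1/\alpha}$ and $(x - \kappa_t(y))/t^{1/\alpha}$. Applying \eqref{flowCorr} with $s$ replaced by $t-s$ yields the two-sided bound
$$
e^{-Ct^\delta}|x - \kappa_t(y)| - Ct^{1/\alpha} \leq |\chi_{t-s}(x) - \kappa_s(y)| \leq e^{Ct^\delta}|x - \kappa_t(y)| + Ct^{1/\alpha},
$$
so the two arguments will differ by at most a bounded multiplicative factor plus an additive shift of order $1$ on the scale $t^{1/\alpha}$. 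The hard part will be absorbing this additive $Ct^{1/\alpha}$ slack near the transition radius where $F_t^{(\alpha,\beta,\gamma)}$ changes its polynomial decay regime; I would close the argument by a short case split, treating $|x - \kappa_t(y)| \lesssim t^{1/\alpha}$ (both normalized arguments are $O(1)$ and $F_t^{(\alpha,\beta,\gamma)}$ is comparable to $t^{-1/\alpha}$ at both) separately from $|x - \kappa_t(y)| \gtrsim t^{1/\alpha}$ (where the additive term is absorbed into the multiplicative distortion, and \eqref{F_mult} together with the monotonicity of $F_t^{(\alpha,\beta,\gamma)}$ in $|\cdot|$ conclude).
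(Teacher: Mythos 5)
Your argument is correct and follows the same route as the paper's proof: write out the spatial convolution, apply \eqref{G_kappa_chi} to one factor to remove the $\kappa$-flow and reduce to Proposition \ref{pAsub_conv}, then restore the flow on the output using \eqref{flowCorr} together with \eqref{vagueF} and \eqref{F_mult}. The only cosmetic difference is that you work with $(H_{t-s}\ast H_s)$ and the straightened point $\chi_{t-s}(x)$, while the paper works with $(H_s\ast H_{t-s})$ and $\chi_s(x)$; these are the same up to the relabeling $s\mapsto t-s$, and your final comparison (lower-bounding $|\kappa_s(y)-\chi_{t-s}(x)|$ by a multiplicative distortion of $|\kappa_t(y)-x|$ minus an additive $O(t^{1/\alpha})$ slack, then absorbing the slack via a case split at $|\kappa_t(y)-x|\asymp t^{1/\alpha}$) is exactly what the paper compresses into the phrase ``similarly to \eqref{G_kappa_chi}''.
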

\begin{proof} Using \eqref{G_kappa_chi} and  Proposition \ref{pAsub_conv}, we get
$$
(H_{s}* H_{t-s})(x,y)\leq C\int_{\Re}G_s^{(\alpha,\beta,\gamma)}(\chi_s(x),y')G_{t-s}^{(\alpha,\beta,\gamma)}(y',\kappa_{t-s}(y))\, dy'\leq C
G_t^{(\alpha,\beta,\gamma)}(\chi_s(x),\kappa_{t-s}(y)).
$$
Using  \eqref{flowCorr}, \eqref{vagueF}, and \eqref{F_mult}, we get similarly to \eqref{G_kappa_chi}
$$
G_t^{(\alpha,\beta,\gamma)}(\chi_s^s(x),\kappa_{t-s}(y))\leq CG_t^{(\alpha,\beta,\gamma)}(x,\kappa_{t}(y))=CH_t(x,y),
$$
which completes the proof.
\end{proof}

   \end{document}